\documentclass[11pt]{amsart}
\usepackage{graphicx}
\usepackage{graphics}
\usepackage{amsmath}
\usepackage{amssymb}
\usepackage{amscd}
\usepackage{latexsym}
\usepackage[all]{xy}
\begin{document}
\textwidth 5.5in
\textheight 8.3in
\evensidemargin .75in
\oddsidemargin.75in

\newtheorem{lem}{Lemma}
\newtheorem{conj}{Conjecture}
\newtheorem{defn}{Definition}
\newtheorem{thm}{Theorem}
\newtheorem{cor}{Corollary}
\newtheorem{lis}{List}
\newtheorem{exm}{Example}
\newtheorem{prob}{Problem}
\newtheorem{rmk}{Remark}
\newtheorem{que}{Question}
\newtheorem{prop}{Proposition}
\newtheorem{clm}{Claim}
\newcommand{\p}[3]{\Phi_{p,#1}^{#2}(#3)}
\def\Z{\mathbb Z}
\def\R{\mathbb R}
\def\g{\overline{g}}
\def\odots{\reflectbox{\text{$\ddots$}}}
\newcommand{\tg}{\overline{g}}
\def\proof{{\bf Proof. }}
\def\ee{\epsilon_1'}
\def\ef{\epsilon_2'}
\title{Variations of 4-dimensional twists obtained by an infinite order plug}
\author{Motoo Tange}
\thanks{This work was supported by JSPS KAKENHI Grant Number 24-840006.}
\subjclass{57R55, 57R65}
\keywords{4-manifolds, exotic structure, cork, plug, Fintushel-Stern's knot-surgery, rational tangle, knot mutation}
\address{Institute of Mathematics, \\
University of Tsukuba, Ibaraki 305-8571 JAPAN }
\email{tange@math.tsukuba.ac.jp}
\date{\today}
\maketitle
\begin{abstract}
In the previous paper the author defined an infinite order plug $(P,\varphi)$ which gives rise to infinite
Fintushel-Stern's knot-surgeries.
Here, we give two 4-dimensional infinitely many exotic families $Y_n$, $Z_n$ of exotic enlargements of the plug.
The families $Y_n$, $Z_n$ have $b_2=3$, $4$ and the boundaries are 3-manifolds with $b_1=1$, $0$ respectively.
We give a plug (or g-cork) twist $(P,\varphi_{p,q})$ producing the 2-bridge knot or link surgery by combining the plug $(P,\varphi)$.
As a further example, we describe a 4-dimensional twist $(M,\mu)$ between knot-surgeries for two mutant knots.
The twisted double concerning $(M,\mu)$ gives a candidate of exotic $\#^2S^2\times S^2$.
\end{abstract}

\section{Introduction}
\label{intro}
\subsection{Corks and plugs}
\label{smoothstr}
If two smooth manifolds $X,X'$ are homeomorphic but non-diffeomorphic,
then we say that $X$ and $X'$ are {\it exotic (or exotic pair)}.

A cut-and-paste is a performance removing a submanifold $Z$ from $X$ and regluing $Y$ via $\phi:\partial Y\to \partial Z$.
We use the notation $(X-Z)\cup_\phi Y$ for the cut-and-paste.
We call a cut-and-paste {\it a local move} in this paper.
Let $Y$ be a (codimension 0) submanifold of a 4-manifold $X$.
Let $\phi$ be a diffeomorphism $\partial Y\to \partial Y$.
We denote the local move with respect to $(Y,\phi)$ by
$$X(Y,\phi):=[X-Y]\cup_{\phi}Y,$$
and call such a local move {\it a twist} $(Y,\phi)$.

For a pair of exotic 4-manifolds $X,X'$, we call a compact contractible Stein manifold $Cr$ {\it a cork}, if $Cr$ is smoothly embedded in $X$, and $X'$ is obtained by a cut-and-paste of $Cr\subset X$ according to a diffeomorphism $\tau:\partial Cr\to \partial Cr$.
Hence, the boundary diffeomorphism $\tau$ cannot extend to inside $Cr$ as a diffeomorphism.
We also call the deformation a {\it cork twist} $(Cr,\tau)$.
Suppose that $X$ and $X'$ are two exotic simply-connected closed oriented 4-manifolds.
Then they are changed to each other by a cork twist $(Cr,\tau)$ with an order 2 boundary diffeomorphism $\tau$ (\cite{[AM]}, \cite{[CFHS]}, \cite{[M]}).
Namely, this means
$$X'=X(Cr,\tau).$$
Hence, in some sense, the existence of such a cork $(Cr,\tau)$ causes 4-dimensional differential structures.
Akbulut and Yasui in \cite{[AY1]} defined another kind of twists, which are called {\it plug twists}, and which change smooth structures.
The definition is given in the later section.
A study of cork and plug should play a key role in understanding differential structures of 4-manifolds.

In this paper, we produce two types of infinitely many exotic enlargements of $P$.
The meaning of studying enlargements is to investigate to what extent the `exotic producer' like cork or plug can extend to 
a larger 4-manifold.
Putting a plug $(P,\varphi)$ defined in \cite{Tan1} and other deformations together, we give infinite variations of 4-dimensional (plug or g-cork) twist for rational tangle replacement.
In terms of local move of knot we give a 4-dimensional twist $(M,\mu)$ with respect to the knot mutation,
however since $M$ is not a Stein manifold, the twist is neither plug nor g-cork.
\section{The definitions, results, and brief proofs.}
In this section we give definitions appeared, and a sequence of results obtained in this paper.
We also give several proofs proven immediately.
\subsection{Infinite order cork and plug.}
Let $P$ denote a 4-manifold described in {\sc Figure}~\ref{P}.
A diffeomorphism $\varphi:\partial P\to \partial P$ is defined to be {\sc Figure}~\ref{varphi}.
\begin{rmk}
Throughout this paper, any unlabeled component in any diagrams of 4-manifolds or of 3-manifolds stands for
a $0$-framed 2-handle or a $0$-surgery respectively.
\end{rmk}
\begin{figure}[htbp]
\begin{center}
\includegraphics{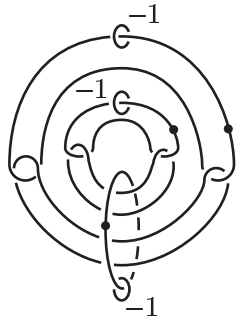}
\caption{A handle decomposition of $P$.}
\label{P}
\end{center}
\end{figure}
\begin{figure}[htbp]
\begin{center}
\includegraphics{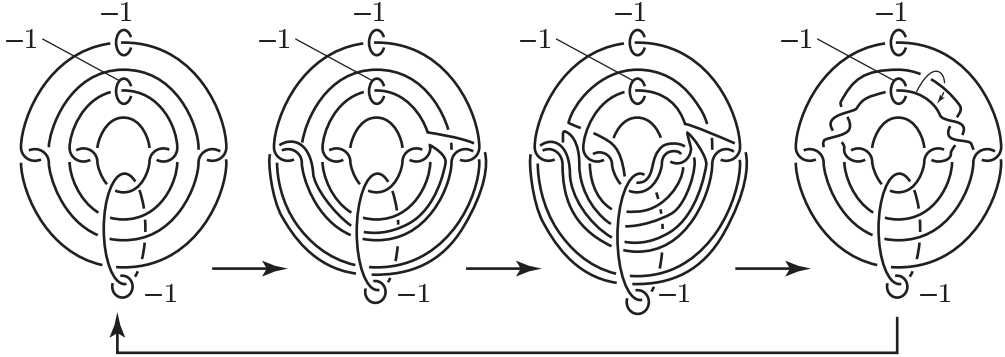}
\caption{A diffeomorphism $\varphi:\partial P\to \partial P$.}
\label{varphi}
\end{center}
\end{figure}
The paper \cite{Tan1} shows that the twist $(P,\varphi)$ is an infinite order plug,
and the square twist $(P,\varphi^2)$ is a generalized cork (a g-cork), as defined later.
Namely, $(P,\varphi)$ and $(P,\varphi^2)$ satisfy the following:
\begin{thm}[\cite{Tan1}]
\label{tangeprev}
$P$ is a Stein 4-manifold.
The map $\varphi:\partial P\to \partial P$ has infinite order and $\varphi$ cannot extend to a self-homeomorphism on inside $P$.
There exists a 4-manifold $X$ such that $\{X(P,\varphi^k)\}$ is a family of mutually exotic 4-manifolds.

The map $\varphi^2$ can extend to a self-homeomorphism,
but cannot extend to any self-diffeomorphism on $P$.
\end{thm}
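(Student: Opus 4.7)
The plan is to reduce all six assertions to two ingredients: a Legendrian handle presentation of $P$, and an identification of $X(P,\varphi^k)$ with an explicit Fintushel-Stern knot surgery.

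First I would verify the Stein condition by isotoping the handle diagram of Figure~\ref{P} into a standard form in which every 1-handle appears as a dotted unknot and every 2-handle is attached along a Legendrian knot whose framing equals $\mathrm{tb}-1$. Once such a presentation is in hand, Eliashberg's criterion in its Gompf handle-diagrammatic form produces the Stein structure on $P$.

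Next I would analyze the twist. The map $\varphi$ should be designed so that regluing $P$ along $\varphi^k$ corresponds, inside a cusp neighborhood containing $P$, to a rational tangle replacement along the attaching region of the cusp's nodal fiber. I would therefore choose $X$ to be a simply-connected 4-manifold with $b_2^+>1$ containing $P$ inside such a cusp neighborhood, and then show by Kirby calculus that $X(P,\varphi^k)$ is diffeomorphic to the Fintushel-Stern knot surgery $X_{K_k}$ for an explicit family of knots $K_k$. The key numerical step is to verify that the Alexander polynomials $\Delta_{K_k}(t)$ are pairwise distinct. Once this is established, the Fintushel-Stern formula implies that the Seiberg-Witten invariants of $\{X(P,\varphi^k)\}$ are pairwise distinct, giving the exotic family of assertion~4. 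The non-diffeomorphism of $X(P,\varphi^k)$ and $X$ for every $k\neq 0$ then immediately forces the infinite order of $\varphi$ modulo smooth extensions (assertion~2) and the impossibility of smoothly extending $\varphi^2$ over $P$ (assertion~6).

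For the two boundary-only statements I would proceed algebraically. To prove that $\varphi$ does not extend even topologically (assertion~3), I would compute the action of $\varphi_*$ on $H_1(\partial P;\Z)$ together with its linking form and show that the resulting automorphism fails to preserve the kernel of $H_1(\partial P)\to H_1(P)$, giving an obstruction that is already topological. For the topological extension of $\varphi^2$ (assertion~5) I would either construct an explicit extension by hand or appeal to Boyer's classification: a boundary homeomorphism extends topologically once it acts trivially on the intersection form, on $H_1(\partial P)$, and on the relevant Kirby-Siebenmann data, and these vanishings can be read off directly from the handle diagram of $\varphi^2$.

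The main obstacle I expect is the middle step: the explicit Kirby-calculus identification of $X(P,\varphi^k)$ with a Fintushel-Stern surgery on a concrete knot $K_k$ whose Alexander polynomial can be computed and shown to depend genuinely on $k$. Everything else is either bookkeeping with the diagrams of Figures~\ref{P} and \ref{varphi} or an appeal to standard Stein and topological-extension criteria once this dictionary is set up.
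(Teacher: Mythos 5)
This theorem is imported from \cite{Tan1} and is not reproved in the present paper, so the comparison has to be made against the machinery the paper deploys for its generalizations (Theorem~\ref{crossingchange}, Proposition~\ref{pluggcork}, Proposition~\ref{Mmutant2}). Most of your outline matches that machinery: the Stein structure via a Legendrian/Eliashberg--Gompf handle presentation; the identification of the twist $(P,\varphi^k)$ with a $k$-fold full twist on a knot inside a cusp/fishtail neighborhood, hence with a Fintushel--Stern surgery $X_{K_k}$; distinctness of the $\Delta_{K_k}$ giving distinct Seiberg--Witten invariants and hence the mutually exotic family (with the pairwise homeomorphisms supplied by the fact that knot surgery preserves the homeomorphism type); and the observation that the exotic family forces both the infinite order of $\varphi$ in the mapping class group and the non-extendability of $\varphi^2$ as a diffeomorphism. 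Your use of Boyer's theorem for the topological extension of $\varphi^2$ is also the route the paper takes in the analogous Proposition~\ref{Mmutant2}.

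The genuine gap is your argument for the assertion that $\varphi$ itself does not extend to a self-\emph{homeomorphism} of $P$. You propose to show that $\varphi_*$ fails to preserve the kernel of $H_1(\partial P)\to H_1(P)$, possibly combined with the linking form. But $P$ is simply connected, so $H_1(P)=0$ and that kernel is all of $H_1(\partial P)$; the obstruction is vacuous and cannot detect anything. No invariant of the pair $(\varphi_*,H_1(\partial P))$ of this abelian type will work here. The obstruction actually used (this is the Akbulut--Yasui plug criterion \cite{[AY1]}, and it is exactly what Proposition~\ref{pluggcork} runs for the generalization $\varphi_{p,q}$, of which $\varphi=\varphi_{2,1}$ is the special case) is the \emph{type} of the intersection form of the twisted double $D_\varphi(P)=P\cup_\varphi(-P)$: one computes $Q_{D_\varphi(P)}\cong\oplus^2\langle1\rangle\oplus^2\langle-1\rangle$, which is odd, whereas $Q_{D_{\mathrm{id}}(P)}\cong\oplus^2H$ is even. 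A homeomorphism extension $\Phi$ of $\varphi$ would give a homeomorphism $D_{\mathrm{id}}(P)\cong D_\varphi(P)$ (identity on one copy, $\Phi$ on the other), so the parity mismatch is a contradiction. You should replace your $H_1$/linking-form step with this twisted-double computation; the same computation, read in the even case, is also what feeds Boyer's criterion to produce the topological extension of $\varphi^2$.
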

In general, we define an infinite order plug, cork and g-cork.
\begin{defn}[Infinite order plug]
\label{pluginfinite}
$({\mathcal P},\phi)$ is an infinite order plug if it satisfies the following conditions:
\begin{enumerate}
\item ${\mathcal P}$ is a compact Stein 4-manifold.
\item $\phi$ cannot extend to a self-homeomorphism on ${\mathcal P}$.
\item There exists a 4-manifold $X$ and embedding ${\mathcal P}\subset X$ such that $\{X({\mathcal P},\varphi^k)\}$ is a family of mutually exotic 4-manifolds.
\end{enumerate}
\end{defn}
\begin{defn}[Infinite order cork]
\label{corkinfinite}
$({\mathcal C},\phi)$ is an infinite order cork if it satisfies the following conditions:
\begin{enumerate}
\item ${\mathcal C}$ is a compact contractible Stein 4-manifold.
\item $\phi^k$ cannot extend to any self-diffeomorphism on ${\mathcal C}$ for any positive integer $k$.
\end{enumerate}
If $X({\mathcal C},\phi^k)$ are mutually exotic 4-manifolds, then $({\mathcal C},\phi)$ is an infinite order cork for $\{X({\mathcal C},\phi^k)\}$.
In the case where ${\mathcal C}$ is not contractible, in place of being contractible in the (1) condition, we call $({\mathcal C},\phi)$ a generalized cork (or g-cork).
\end{defn}
The order of each $\phi$ in Definition~\ref{pluginfinite} and \ref{corkinfinite} as a mapping class on the boundary 3-manifold is infinite.
The twist $(P,\varphi)$ defined above is an infinite order plug and $(P,\varphi^2)$ is an infinite order g-cork.
\begin{figure}[htbp]
\begin{center}
\includegraphics{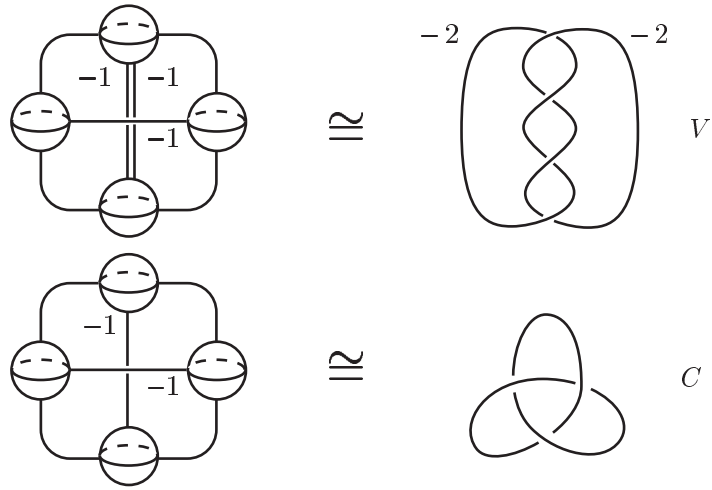}
\caption{The neighborhoods of Kodaira's singularity III and II (cusp).}
\label{KodIII}
\end{center}
\end{figure}
Furthermore, the plug twist $(P,\varphi)$ can make a Fintushel-Stern's knot-surgery.
Let $V$ and $C$ denote the neighborhoods of Kodaira's singularity III and II.
See {\sc Figure}~\ref{KodIII} for the diagrams.
$C$ is called a {\it cusp neighborhood}.
These diagrams can be also seen in \cite{KK}.
\begin{thm}[\cite{Tan1}]
\label{crossingchange}
Let $X$ be a 4-manifold containing $V$ and let $K$ be a knot.
Let $X_K$ be a knot-surgery of $X$ along the general fiber of $V$.
For a knot $K'$ obtained by changing a crossing of a diagram of $K$,
there exists an embedding $i:P\hookrightarrow X_K$ such that for the embedding $i$ we have
$$X_{K'}=X_K(P,\varphi).$$

The $n$-th power $(P,\varphi^n)$ makes an $n$ times full-twist 
$$X_{K_n}=X_{K}(P,\varphi^n),$$
where $K$ and $K_n$ are the two knots whose local diagrams are {\sc Figure}~\ref{tw} and whose remaining diagrams are the same thing.
\begin{figure}[htbp]
\begin{center}
\includegraphics{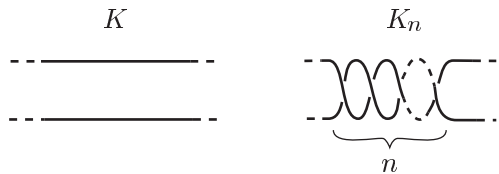}
\caption{$K_n$ is the $n$-full twist of $K$.}
\label{tw}
\end{center}
\end{figure}
\end{thm}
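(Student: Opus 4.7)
My plan is to reduce the statement to a local Kirby-calculus computation in a tubular region of $X_K$ where the knot $K$ appears explicitly. First I would recall that, in Fintushel--Stern knot surgery, a general torus fiber $T\subset V$ is removed and $(S^3\setminus \nu(K))\times S^1$ is glued back so that the meridian of $T$ matches the longitude of $K$ and the vanishing cycle provided by the cusp inside $V$ is sent to the meridian of $K$. Using the handle pictures in {\sc Figure}~\ref{KodIII} together with the standard handle presentation of $(S^3\setminus \nu(K))\times S^1$, one can therefore draw a Kirby diagram of $X_K$ in which $K$ literally sits as a tangle in the picture, so local moves on the diagram of $K$ translate into local modifications of this 4-dimensional Kirby diagram.

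Next I would exhibit $i\colon P\hookrightarrow X_K$. Fix the crossing of $K$ to be changed and choose a small $3$-ball $B\subset S^3$ containing exactly that crossing; the product $B\times S^1$ sits inside the surgered region. I would combine this with the two $-2$-framed $2$-handles of $V$ and verify by handle slides that the resulting sub-$4$-manifold is diffeomorphic to $P$ of {\sc Figure}~\ref{P}. The point is that the two $-2$-spheres supply exactly the two $2$-handles of $P$ that link the strands of $K$ at the crossing; they are the ingredient unavailable in a cusp-only neighbourhood, which is why the full Kodaira III is assumed in the hypothesis.

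The heart of the argument is to check that, through this identification, the boundary diffeomorphism $\varphi$ of {\sc Figure}~\ref{varphi} restricts on $\partial(B\times S^1)$ to the crossing change of the tangle $K\cap B$, and acts on the rest of $\partial P$ trivially up to isotopy. I would verify this by tracking, handle-by-handle, the sequence of slides defining $\varphi$, and then reading the resulting Kirby diagram as that of the Fintushel--Stern surgery on $K'$. The $n$-th power statement is handled analogously: replace the crossing tangle by two parallel strands, and observe that $\varphi^n$ then acts on them as $n$ full twists, so the resulting $4$-manifold is $X_{K_n}$.

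The main obstacle is precisely the middle step: pinning down the concrete identification of a sub-$4$-manifold of $X_K$ with $P$, and translating $\varphi$ into the diagrammatic move on $K$. This is an algorithmic but lengthy Kirby diagram chase whose success relies on the extra freedom provided by the two $-2$-spheres of $V$; once the correspondence between $P$ and the local surgery region is fixed, the crossing-change interpretation of $\varphi$ becomes a direct computation, and the $n$-twist version follows by iteration.
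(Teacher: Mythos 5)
The first thing to note is that the paper does not prove Theorem~\ref{crossingchange}: it is quoted with the citation \cite{Tan1} and used as a black box (e.g.\ in the proof of Theorem~\ref{goon}). So there is no in-paper argument to compare against; the closest analogues are the proof of Proposition~\ref{knot1} (the embedding of Figure~\ref{twist3} and the step-by-step twisting of Figure~\ref{plugb1}) and the decomposition $E(1)_{L_n}=([S^3-\nu(L_n)]\times S^1)\cup_i Dv_i\cup_j Ds_j\cup M_c(2,3,6)\cup M_c(2,3,5)$ in the proof of Theorem~\ref{Z}. Measured against those, your overall strategy is the right one and essentially the one used in \cite{Tan1}: draw a Kirby diagram of $X_K$ in which $K$ sits as a tangle, locate $P$ as a sub-handlebody supported near two strands, and check that regluing by $\varphi$ inserts a full twist there while acting trivially elsewhere.

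Two caveats. First, your description of the auxiliary handles is off: the paper's $V$ (Figure~\ref{KodIII}, following \cite{KK}) is $T^2\times D^2$ with $(-1)$-framed $2$-handles attached along vanishing cycles, and what completes the local tangle complement crossed with $S^1$ to the simply connected $P$ are neighborhoods of vanishing-cycle disks (compare the $Dv_i$ in the proof of Theorem~\ref{Z} and the caption of Figure~\ref{FF}), not the two $(-2)$-sphere components of the type III fiber; the sections enter as the \emph{extra} handles $h_1,h_2$ of $Y_0,Z_0$, not as part of $P$. Second, and more importantly, the two steps you correctly single out as ``the heart of the argument'' --- that the sub-handlebody you assemble is diffeomorphic to the specific $P$ of Figure~\ref{P}, and that the specific $\varphi$ of Figure~\ref{varphi} corresponds under that identification to a full twist on the two strands --- are precisely the content of the theorem, and your sketch leaves both entirely unexecuted. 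Since $\varphi$ is defined only by a diagram, no amount of general positioning substitutes for that Kirby-calculus verification; as written, the proposal is a correct plan rather than a proof.
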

\begin{rmk}
The crossing change is a local move for knots or links.
Theorem~\ref{crossingchange} means that the plug twist $(P,\varphi)$ plays a role in `the crossing change of 4-manifolds' obtained by knot-surgery in some sense.
Similarly, for many other local moves of knots or links, one can construct a local move over 4-manifolds.
We will give an example of a 4-dimensional local move (twist) coming from a local move (knot mutation) of knots and links at the later section.
\end{rmk}

Here we define the knot-surgery and (2-component) link-surgery according to \cite{[FS]}.
Let $T\subset X$ be an embedded torus with trivial neighborhood and $K$ a knot in $S^3$.
The $0$-surgery $M_K$ cross $S^1$ naturally contains an embedded torus $T_m=\{\text{meridian}\}\times S^1$ with the trivial neighborhood.
Then, {\it (Fintushel-Stern's) knot-surgery} $X_K$ is defined to be the fiber-sum
$$X_K=(M_K\times S^1)\#_{T_m=T}X.$$

Let $U_1,U_2$ be two 4-manifolds containing an embedded tori $T_i\subset U_i$ with the trivial neighborhoods.
Let $L=K_1\cup K_2$ be a $2$-component link.
Let
$$\alpha_L:\pi_1(S^3-L)\to {\Bbb Z}$$
be a homomorphism satisfying $\alpha_L(m_i)=1$, where $m_i$ is the meridian curve of $K_i$.
Let $M_L$ be the $\alpha(\ell_i)$-surgery of $L$, where $\ell_i$ is the longitude of $K_i$.
Let $T_{m_i}$ be a torus $m_i\times S^1\subset M_L\times S^1$.
Then, we denote by $(U_1,U_2)_L$ the following double fiber-sum operation:
$$(U_1,U_2)_L=U_1\#_{T_1=T_{m_1}}(M_L\times S^1)\#_{T_{m_2}=T_2}U_2.$$
In the case of $U=U_1=U_2$, we write as $(U,U)_L=U_L$.
We call $(U_1,U_2)_L$ {\it the link-surgery by the link $L$}.
\subsection{Two kinds of enlargements $Y_n$ and $Z_n$.}
Akbulut-Yasui's corks $(W_n,f_n)$ and plugs $(W_{m,n},f_{m,n})$ in \cite{[AY1]} can give exotic enlargements by attaching 2-handles.
In this paper we consider two kinds of enlargements $Y_0=P\cup h_1$ and $Z_0=P\cup h_1\cup h_2$, where $h_1$, and $h_2$ are two 2-handles on $P$
as indicated in {\sc Figure}~\ref{PH} and the framings are both $-1$.
\begin{figure}[htbp]
\begin{center}
\includegraphics{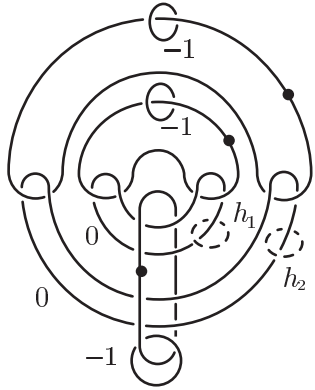}
\caption{Two kinds of attachments $Y_0=P\cup h_1$ and $Z_0=P\cup h_1\cup h_2$.}
\label{PH}
\end{center}
\end{figure}
Hence, we have
$$Y_0=\tilde{Y}\#\overline{{\Bbb C}P^2}$$
and
$$Z_0=\tilde{Z}\#^2\overline{{\Bbb C}P^2}.$$
$\tilde{Y}$ (and $\tilde{Z}$) are 4-manifolds presented by the left (and right) diagrams in {\sc Figure}~\ref{boundary}.

Let $Y_n$ and $Z_n$ define to be other enlargements obtained by twists
\begin{equation}Y_n=Y_0(P,\varphi^n)\label{defnYn}\end{equation}
and
\begin{equation}Z_n=Z_0(P,\varphi^n)\label{defnZn}\end{equation}
with respect to the embeddings $P\hookrightarrow Y_0$ and $Z_0$.
Since $Y_n$ and $Z_n$ are the 2-handle attachments of the simply-connected manifold $P$, 
they are also simply-connected and the Betti numbers $b_2$ of them are $3$ and $4$ respectively.

The g-cork $(P,\varphi^2)$ in \cite{Tan1} gives the diffeomorphisms:
\begin{equation}
Y_{n+2}\simeq Y_n\label{Yhomeo}
\end{equation}
and
\begin{equation}
Z_{n+2}\simeq Z_n.\label{Zhomeo}
\end{equation}
In this paper we use notation $\cong$ and $\simeq$ as a diffeomorphism and a homeomorphism respectively.
Hence, $Y_{2n}$ (or $Z_{2n}$) is homeomorphic to $Y_0$ (or $Z_0$) and $Y_{2n+1}$ (or $Z_{2n+1}$) is homeomorphic to $Y_1$ (or $Z_1$).
Actually $Y_n$ and $Z_n$ give the four homeomorphism types.
\begin{prop}
\label{XYZ}
Let $X$ be $Y$ or $Z$.
In $\{X_n\}$ there exist two homeomorphism types $X_0$ and $X_1$ and we have
$$X_n\simeq \begin{cases}X_0&n\equiv 0\bmod 2\\X_1&n\equiv 1\bmod 2.\end{cases}$$
\end{prop}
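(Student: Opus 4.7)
My plan is to split the claim into the $2$-periodicity and the distinctness of the two resulting homeomorphism types. The $2$-periodicity is immediate from the g-cork property of $(P,\varphi^{2})$ recalled in Theorem~\ref{tangeprev}: since $\varphi^{2}$ extends to a self-homeomorphism $\Phi$ of $P$, gluing $\Phi$ on the $P$-piece to the identity on the exterior gives a homeomorphism $X_{0}(P,\varphi^{2k})\simeq X_{0}$. Applied to the embeddings $P\hookrightarrow Y_{0},Z_{0}$ this is precisely (\ref{Yhomeo}) and (\ref{Zhomeo}), so induction on $n$ places every $X_{n}$ in the homeomorphism class of $X_{0}$ (for even $n$) or of $X_{1}$ (for odd $n$). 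This already establishes the displayed formula, and reduces the proposition to the distinctness assertion $X_{0}\not\simeq X_{1}$.

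For that distinctness I would use the boundary $3$-manifold as a topological invariant. Because the embedding $P\hookrightarrow Y_{0}$ (resp.\ $Z_{0}$) has a non-trivial trace of $\partial P$ on the boundary of the enlargement, the $\varphi$-twist genuinely modifies the boundary, so it suffices to show $\partial X_{0}\not\simeq\partial X_{1}$. Concretely, I would read off a surgery description of $\partial X_{n}$ from Figures~\ref{P}, \ref{varphi}, \ref{PH}, simplify by Kirby moves, and then separate the two boundaries by a classical $3$-manifold invariant. For $\partial Y_{n}$ (where $b_{1}=1$) the Alexander polynomial of the knot whose $0$-surgery realises the boundary is the natural choice, and by the crossing-change interpretation of $(P,\varphi)$ in Theorem~\ref{crossingchange} one expects $\partial Y_{0}$ and $\partial Y_{1}$ to come from knots differing by a single crossing change, whose Alexander polynomials are easily distinguished. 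For $\partial Z_{n}$ (where $b_{1}=0$) the order of $H_{1}$ or a Reidemeister-torsion / Casson-type invariant should play the analogous role.

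The main obstacle I anticipate is the explicit Kirby-calculus identification of $\partial X_{n}$: one must pinpoint exactly which arcs of $\partial P$ sit on $\partial X_{0}$, track how $\varphi$ acts on them, and reduce the resulting surgery diagram to a form from which a standard invariant can be extracted. Once the boundary $3$-manifolds are in hand, certifying that $\partial X_{0}\not\simeq\partial X_{1}$—and hence that $X_{0}\not\simeq X_{1}$—should be a short algebraic computation.
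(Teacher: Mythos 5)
Your first step (the $2$-periodicity) is exactly the paper's: the homeomorphisms (\ref{Yhomeo}) and (\ref{Zhomeo}) supplied by the g-cork $(P,\varphi^{2})$ reduce everything to the two classes of $X_{0}$ and $X_{1}$, and that part is fine. The problem is your second step. The boundary cannot distinguish $X_{0}$ from $X_{1}$, because a twist never changes the boundary up to diffeomorphism: $X_{n}$ is just $P$ with the $2$-handle(s) reattached along the $\varphi^{n}$-images of the original attaching curves, so $\partial X_{n}$ is the Dehn surgery of $\partial P$ along $\varphi^{n}(\alpha)$ with the transported framing, and the self-diffeomorphism $\varphi^{-n}$ of $\partial P$ carries this surgery description back to that of $\partial X_{0}$. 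The paper records this explicitly: $\partial Y_{n}$ is one fixed $3$-manifold (the $0$-surgery in the left diagram of {\sc Figure}~\ref{boundary}) and $\partial Z_{n}$ is the $1$-surgery of the granny knot, for \emph{every} $n$. So your premise that ``the $\varphi$-twist genuinely modifies the boundary'' is false, and no Kirby calculus or Alexander-polynomial computation will separate $\partial X_{0}$ from $\partial X_{1}$; the distinctness claim is left unproved.

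The invariant that actually works is the parity (type) of the intersection form, which is what the paper uses (Lemma~\ref{intersectionform}): $Q_{Y_{n}}\cong\langle 0\rangle\oplus\langle 1\rangle\oplus\langle -1\rangle$ (odd) for even $n$ and $\langle 0\rangle\oplus H$ (even) for odd $n$, and similarly $Q_{Z_{n}}$ is $\oplus^{2}\langle 1\rangle\oplus^{2}\langle -1\rangle$ versus $\oplus^{2}H$. Homeomorphic $4$-manifolds have isomorphic intersection forms, and an odd form is never isomorphic to an even one, so $X_{0}\not\simeq X_{1}$. The computation is a short exercise on the diagrams in {\sc Figures}~\ref{PH} and \ref{F}, so the repair is easy, but it must replace, not supplement, the boundary argument.
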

This proposition is proven by seeing intersection forms in later section.
The boundary $\partial Y_n$ is diffeomorphic to the 3-manifold described by the left diagram in {\sc Figure}~\ref{boundary}.
This is a $0$-surgery on $-\Sigma(2,3,5)$ as the left diagram in {\sc Figure}~\ref{boundary}.
The boundary $\partial Z_n$ is $1$-surgery of the granny knot.
The proof is in {\sc Figure}~\ref{granny}.
\begin{figure}[htbp]
\begin{center}\includegraphics{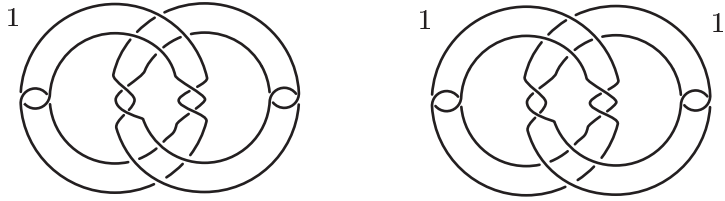}
\caption{Diagrams of $\tilde{Y}$ and $\tilde{Z}$ (as 4-manifolds) and $\partial\tilde{Y}$ and $\partial\tilde{Z}$ (as 3-manifolds).}
\label{boundary}\end{center}
\end{figure}
\begin{figure}[htpb]
\begin{center}
\includegraphics{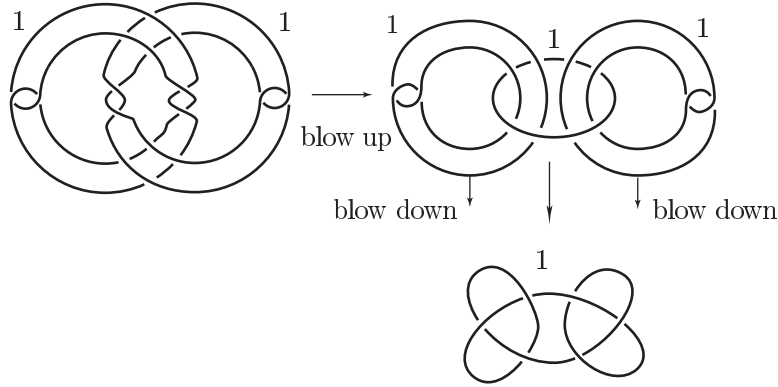}
\caption{$\partial Z_n$ is homeomorphic to $1$-surgery of the granny knot.}
\label{granny}
\end{center}
\end{figure}

From the view point of geometry, $Y_n$ and $Z_n$ have the following property.
\begin{thm}
\label{goon}
Let $n$ be a positive integer.
$Y_n$ and $Z_n$ are submanifolds of irreducible symplectic manifolds.
\end{thm}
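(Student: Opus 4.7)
The plan is to exhibit, for each $n\geq 1$, an embedding of $Y_n$ (respectively $Z_n$) into a knot-surgered elliptic surface $E(k)_{K_n}$ with $k\geq 2$ and $K_n$ a fibered knot, and then to invoke symplecticity and minimality of such knot-surgeries.

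First, I would construct explicit embeddings $Y_0\hookrightarrow E(k)_{K_0}$ and $Z_0\hookrightarrow E(k)_{K_0}$ for a suitable fibered knot $K_0$, chosen so that the copy of $P$ inside $Y_0$ (respectively $Z_0$) sits inside $E(k)_{K_0}$ as in Theorem~\ref{crossingchange}, i.e.\ so that its twist by $\varphi$ realizes a crossing change at a chosen crossing $c$ of $K_0$. Since $E(k)$ for $k\geq 2$ contains the Kodaira type III neighborhood $V$ and knot-surgery can be performed along a regular fiber of $V$, the existence of $P\hookrightarrow E(k)_{K_0}$ is provided by Theorem~\ref{crossingchange}. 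The extra $-1$-framed 2-handles $h_1$ (and $h_1, h_2$ in the case of $Z_0$) attached to $P$ then need to be identified inside a Kirby diagram of $E(k)_{K_0}$; concretely, I would look for them near the Gompf nucleus of the fibration, where $-1$-framed curves naturally appear after suitable handleslides.

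Second, once such embeddings exist, Theorem~\ref{crossingchange} immediately yields
\begin{equation*}
Y_n \;=\; Y_0(P,\varphi^n) \;\hookrightarrow\; E(k)_{K_0}(P,\varphi^n) \;=\; E(k)_{K_n},
\end{equation*}
where $K_n$ denotes the $n$-full twist of $K_0$ at the crossing $c$, and similarly for $Z_n$.

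Third, to ensure that the ambient manifold is irreducible and symplectic for every $n\geq 1$, I would choose $K_0$ and $c$ so that all the full twists $K_n$ are nontrivial fibered knots. A natural model is twisting two parallel strands of the unknot in a disk inside $K_0$, which produces the torus knots $T(2,2n+1)$, fibered for every $n\geq 1$; more generally, twisting along a disk transverse to a Seifert fibration preserves fiberedness. Then the Fintushel-Stern construction makes $E(k)_{K_n}$ a symplectic 4-manifold, and for $k\geq 2$ standard Seiberg-Witten arguments (absence of basic classes of square $-1$) show that $E(k)_{K_n}$ is minimal, hence smoothly irreducible.

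The main obstacle is the first step: a concrete Kirby calculus identification of the handles $h_1$ and $h_2$ inside the Kirby diagram of an appropriate $E(k)_{K_0}$, compatible with the specific embedding of $P$ used in Theorem~\ref{crossingchange}. Once this identification is carried out the remainder of the argument is essentially formal, combining Theorem~\ref{crossingchange} with the symplectic and gauge-theoretic properties of knot-surgery on elliptic surfaces.
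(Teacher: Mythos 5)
The step you flag as ``the main obstacle'' --- realizing the $-1$-framed 2-handles $h_1$ (and $h_2$) inside the ambient surgered manifold --- is not a technicality to be carried out later; it is the entire content of the theorem, and your choice of ambient manifold makes it likely to fail. The natural candidates for $h_1,h_2$ are the section 2-handles of the elliptic fibration restricted to the fiber complement (the paper notes explicitly that $h_1,h_2$ correspond to sections of $E(1)-\nu(T^2)$), and a section of $E(k)$ has self-intersection $-k$. So for $k\geq 2$ the curves along which you would attach these handles carry framing $-k$, not $-1$, and the resulting enlargement of $P$ would not be $Y_n$ or $Z_n$. You chose $k\geq2$ precisely to get $b_2^+>1$ (needed to run the Seiberg--Witten minimality argument and Kotschick's irreducibility theorem), but that choice is in direct tension with the framing requirement; the tension is not resolved in your sketch.

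The paper resolves it differently: it keeps $E(1)$, so that the section really is a $-1$-sphere and $h_1,h_2$ embed on the nose, and it recovers $b_2^+>1$ by replacing knot surgery with the \emph{link} surgery $E(1)_{L_n}$ along the $(2,2n)$-torus link $L_n$, i.e.\ a double fiber sum of two copies of $E(1)$ through $M_{L_n}\times S^1$. The twist $(P,\varphi^n)$ applied to $P\subset Y_0\subset E(1)_{L_0}$ simultaneously converts $Y_0$ into $Y_n$ and $E(1)_{L_0}$ into $E(1)_{L_n}$ (Theorem~\ref{crossingchange}), and the same works for $Z_0\subset E(1)_{L_0}$. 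Irreducibility is then Lemma~\ref{minimal}: the basic classes of $E(1)_{L_n}$ are the classes $i(t_1+t_2)$ with $(t_1+t_2)^2=0$, so no two basic classes can differ by twice a square $-1$ class, which rules out a $(-1)$-sphere; minimality plus $b_2^+>1$ plus Kotschick gives irreducibility. Your outline is structurally parallel (embed, twist, SW-minimality), but as written the first step would have to be reworked along these lines rather than merely completed.
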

For the differential structures, we get the following theorem.
\begin{thm}
\label{exoticex}
Let $n$ be any positive integer $n$. 
Then $Y_{2n}$ and $Y_0$ are exotic.
\end{thm}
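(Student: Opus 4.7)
The plan is to separate the statement into the homeomorphism $Y_{2n}\simeq Y_0$ and the smooth non-diffeomorphism. The homeomorphism is (\ref{Yhomeo}) iterated: by Theorem~\ref{tangeprev} the square $\varphi^2$ extends to a self-homeomorphism of $P$, and glueing this extension with the identity on the 2-handle $h_1$ produces a homeomorphism $Y_{2n}\to Y_0$. So the real content of the theorem is the smooth distinction for $n\geq 1$.

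For this distinction I would coherently embed $Y_0$ into a closed 4-manifold and reduce to the Fintushel--Stern Seiberg--Witten formula via Theorem~\ref{crossingchange}. Concretely, choose an elliptic surface $E$ (say $E(2)$) containing a Kodaira neighborhood $V$, together with a knot $K_0\subset S^3$ having a distinguished crossing, such that $Y_0$ embeds into the knot surgery $E_{K_0}$ with the plug $P\subset Y_0$ lying inside $V$ in the position demanded by Theorem~\ref{crossingchange} and acting at the chosen crossing. Then Theorem~\ref{crossingchange} identifies
\begin{equation*}
E_{K_0}(P,\varphi^n)=E_{K_n},
\end{equation*}
where $K_n$ denotes the $n$-full twist of $K_0$. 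Choosing $K_0$ so that the Alexander polynomials $\Delta_{K_{2n}}(t)$, $n\geq 0$, are pairwise distinct (most generic twist families have this property), the Fintushel--Stern formula
\begin{equation*}
SW_{E_{K_n}}=SW_E\cdot\Delta_{K_n}(t^2)
\end{equation*}
yields pairwise distinct Seiberg--Witten invariants, so the closed manifolds $E_{K_{2n}}$ are pairwise non-diffeomorphic.

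Finally, since the complement $E_{K_0}\setminus\mathrm{int}(Y_0)$ is unchanged under the twist, any diffeomorphism $Y_{2n}\cong Y_0$ respecting the boundary identification glues to a diffeomorphism $E_{K_{2n}}\cong E_{K_0}$, contradicting the Seiberg--Witten calculation. I expect the main obstacle to be twofold. First, one must actually produce the ambient embedding $Y_0\hookrightarrow E_{K_0}$ with a plug placement that realizes a non-trivial twist knot operation; this should follow from (essentially a refinement of) the construction underlying Theorem~\ref{goon}. Second, one must rule out the possibility that some self-diffeomorphism of $\partial Y_0$ extending over the complement absorbs the discrepancy in smooth structures, a point typically settled by checking that boundary diffeomorphisms act trivially on the relevant Seiberg--Witten data, or equivalently by phrasing the argument in terms of Seiberg--Witten invariants of the $Y_n$ relative to $\partial Y_n$.
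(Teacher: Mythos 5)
Your proposal has a genuine gap at its final step, and it misses the much simpler argument the paper actually uses. The inference ``$E_{K_{2n}}\not\cong E_{K_0}$ and the complement of $Y_0$ is unchanged, therefore $Y_{2n}\not\cong Y_0$'' is not valid as stated: an arbitrary diffeomorphism $Y_{2n}\to Y_0$ need not restrict on the boundary to a map that extends over the fixed complement, so it need not glue to a diffeomorphism of the closed manifolds. You flag this yourself, but you do not close it, and closing it is precisely the hard part of such arguments. Indeed, the paper carries out exactly this kind of re-gluing control only for the $Z_n$ (proof of Theorem~\ref{Z}), and there it requires a specific decomposition $E(1)_{L_n}=Z_n\cup R\cup M_c(2,3,6)\cup M_c(2,3,5)$, uniqueness of the essential torus in $\partial Z_n$ from JSJ theory, and the fact that every self-diffeomorphism of $\Sigma(2,3,5)$ and $\Sigma(2,3,6)$ extends over the corresponding Milnor fiber. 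None of that machinery is set up in your sketch for $Y_n$ (whose boundary is a $0$-surgery on $-\Sigma(2,3,5)$, with $b_1=1$), so the argument does not go through as written. There is also a secondary gap: the ambient embedding $Y_0\hookrightarrow E_{K_0}$ realizing the twist is asserted rather than produced; the paper's embedding is into the link-surgery $E(1)_{L_n}$ along the $(2,2n)$-torus links, with $h_1$ a section, not into a knot-surgered $E(2)$.

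The paper's own proof avoids all of this by using an intrinsic invariant of the compact manifolds themselves: $Y_0=\tilde Y\#\overline{{\Bbb C}P^2}$ visibly contains a smoothly embedded sphere of self-intersection $-1$, while $Y_{2n}$ for $n>0$ embeds in $E(1)_{L_{2n}}$, which is shown (Lemma~\ref{minimal}, via the structure of the Seiberg--Witten basic classes and Kotschick's theorem) to be a minimal irreducible symplectic manifold and hence to contain no $(-1)$-sphere. The presence or absence of a $(-1)$-sphere is preserved by any diffeomorphism, with no boundary-gluing issues, so $Y_{2n}\not\cong Y_0$; the homeomorphism comes from Proposition~\ref{XYZ}. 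If you want to salvage your route, you would need either to prove an extension result for diffeomorphisms of $\partial Y_0$ over the complement (as the paper does for $Z_n$), or to switch to an invariant, like minimality, that is detected inside the compact piece alone.
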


Whether $\{Y_n\}$ are mutually non-diffeomorphic manifolds is unknown, however, we can prove the following.
\begin{thm}
\label{exoticinfinite}
Each of $\{Y_{2n}|n\in{\Bbb N} \}$ and $\{Y_{2n+1}|n\in {\Bbb N}\}$ contains infinitely many differential structures.
\end{thm}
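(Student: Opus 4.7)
The plan is to embed each $Y_{2n}$ into a closed irreducible symplectic $4$-manifold realized as a Fintushel--Stern knot-surgery, and then distinguish infinitely many members of the family using the Alexander-polynomial contribution to the Seiberg--Witten invariant.

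By Theorem~\ref{goon}, each $Y_{2n}$ sits inside an irreducible symplectic $4$-manifold $\widetilde{W}_{2n}$. The proof of that theorem should exhibit $\widetilde{W}_{2n}$ explicitly as a knot-surgery $\widetilde{W}_{K_{2n}}$, where $K_{2n}$ is the $2n$-fold full twist of a fixed auxiliary knot $K$. This identification is natural in view of Theorem~\ref{crossingchange}: passing from $Y_0$ to $Y_{2n}$ by the twist $(P,\varphi^{2n})$ realizes, at the ambient level, the knot-surgery change from $K_0$ to $K_{2n}$. One then chooses $K$ so that $\{\Delta_{K_{2n}}(t)\}_{n\in\mathbb{N}}$ contains infinitely many distinct polynomials (generic for any non-trivial $K$), and invokes the Fintushel--Stern formula together with $SW_{\widetilde{W}_0}\neq 0$ (from symplectic irreducibility) to conclude that infinitely many $\widetilde{W}_{K_{2n}}$ are pairwise non-diffeomorphic.

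The descent to $Y_{2n}$ proceeds as follows. Each $\widetilde{W}_{K_{2n}}$ is the union along $\partial Y_0$ of a fixed complement $C$ with $Y_{2n}$, because the $(P,\varphi)$ twist is supported inside $Y_0$. Any diffeomorphism $Y_{2n}\cong Y_{2m}$ would produce, after re-gluing, $\widetilde{W}_{K_{2n}}\cong C\cup_{\phi} Y_{2m}$ for some $\phi\in\mathrm{MCG}(\partial Y_0)$. If the set of closed $4$-manifolds $\{C\cup_\phi Y_{2m}\mid \phi\in\mathrm{MCG}(\partial Y_0)\}$ yields only finitely many diffeomorphism classes, then infinitely many $Y_{2n}$ falling into a single diffeomorphism class would force infinitely many $\widetilde{W}_{K_{2n}}$ into finitely many classes, contradicting the distinctness of their Seiberg--Witten invariants. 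The argument for the odd family $\{Y_{2n+1}\}$ is identical, starting from an analogous symplectic embedding of $Y_1$.

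The main obstacle is controlling the mapping-class-group ambiguity on $\partial Y_0$, which is $0$-surgery on $-\Sigma(2,3,5)$. One must either show that boundary diffeomorphisms failing to extend over $Y_{2n}$ or over $C$ form a finite-index subgroup of $\mathrm{MCG}(\partial Y_0)$, or replace the closed Seiberg--Witten argument by a relative one: the relative Seiberg--Witten invariant of the pair $(\widetilde{W}_{K_{2n}},Y_{2n})$ is an invariant up to boundary-preserving diffeomorphism, and the Alexander-polynomial dependence survives at the relative level, so any diffeomorphism $Y_{2n}\cong Y_{2m}$ would equate the two relative invariants and again contradict the choice of $K$.
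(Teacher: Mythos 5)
Your proposal correctly identifies the ambient objects (the paper embeds $Y_n$ into the link surgeries $E(1)_{L_n}$, $L_n$ the $(2,2n)$-torus link, which are pairwise non-diffeomorphic irreducible symplectic manifolds by the Fintushel--Stern formula), but the descent step --- the actual content of the theorem --- has a genuine gap. You want to say: a diffeomorphism $Y_{2n}\cong Y_{2m}$ lets you reglue and compare the closed manifolds $C\cup_\phi Y_{2m}$ over all $\phi\in\mathrm{MCG}(\partial Y_0)$. But you never control that ambiguity. Your first alternative (finitely many diffeomorphism classes among the regluings $C\cup_\phi Y_{2m}$) is exactly the kind of statement this paper is devoted to refuting: regluing along the boundary by varying $\phi$ is a cork/plug twist, and such twists are shown here to produce \emph{infinitely} many diffeomorphism types. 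Your second alternative (relative Seiberg--Witten invariants for a 4-manifold whose boundary is $0$-surgery on $-\Sigma(2,3,5)$, a 3-manifold with $b_1=1$) is not defined or computed anywhere in the paper, and setting it up and proving it detects the Alexander-polynomial dependence would be a substantial independent project. As written, neither branch is carried out, so the proof does not close.

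The paper's actual argument sidesteps the regluing problem entirely: it never tries to extend a diffeomorphism $\delta\colon Y_m\to Y_n$ to the closed manifold. Instead it uses $\delta$ only to transport an explicit embedded surface $S_m\subset Y_m$ of genus $m(m-1)$ and self-intersection $-2m^2-m-1$ (built from a slice surface of the $(m,2m+1)$-torus knot attaching circle together with the core of $h_1$) into $Y_n\subset E(1)_{L_n}$. The induced map on $H_2$ is pinned down by an algebraic classification of the isometries of $\langle 0\rangle\oplus\langle 1\rangle\oplus\langle -1\rangle$ and $\langle 0\rangle\oplus H$ (Lemmas~\ref{fundamental} and \ref{fundamental2}), and then the Ozsv\'ath--Szab\'o adjunction-type result (Proposition~\ref{ostheorem}) forces $k+2PD([S_m])$ to be a basic class of $E(1)_{L_n}$ whenever $n>3m+4$, contradicting the known basic classes (\ref{basicE1n}) because the $[S_n]$-coefficient of $[S_m]$ is odd, hence nonzero. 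This yields an infinite pairwise non-diffeomorphic subsequence, which is what the theorem asserts. If you want to salvage your approach, you would need to replace the regluing/relative-invariant step with some diffeomorphism invariant of $Y_{2n}$ itself that survives passage to the ambient manifold; the genus of an embedded surface together with the adjunction inequality is precisely such an invariant, and that is the missing idea.
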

We will prove this theorem in Section~\ref{YZ}.
The differential structures $\{Z_n\}$ satisfy the following.
\begin{thm}
\label{Z}
$\{Z_{2n}|n\ge 0\}$ and $\{Z_{2n+1}|n\ge 0\}$ are two families of mutually exotic 4-manifolds.
\end{thm}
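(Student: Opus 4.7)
The natural approach is to combine Theorem~\ref{goon} (the symplectic embedding) with Theorem~\ref{crossingchange} (the knot-surgery interpretation of the plug twist) and then apply Fintushel-Stern's Seiberg-Witten formula. My plan is to realize each $Z_n$ as sitting inside a closed, simply-connected, irreducible symplectic 4-manifold $\Sigma_{K_n}$, where $\Sigma_{K_n}$ is the knot-surgery of a fixed closed symplectic manifold $\Sigma$ (containing a cusp neighborhood $V$) along a family of knots $K_n$ obtained from a single seed knot $K$ by performing $n$ full twists at a chosen crossing. By Theorem~\ref{crossingchange}, the twist $(P,\varphi^n)$ performed inside the embedded $Z_0\subset\Sigma_K$ produces exactly $\Sigma_{K_n}$, so $Z_n$ embeds in $\Sigma_{K_n}$ with a fixed complement $W:=\Sigma_K\setminus\mathrm{int}(Z_0)$ independent of $n$.

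Next I would choose the seed knot $K$ so that the subfamilies $\{\Delta_{K_{2n}}(t)\}$ and $\{\Delta_{K_{2n+1}}(t)\}$ each contain infinitely many distinct Alexander polynomials; a nontrivial choice such as the trefoil placed at the twisting site works, and a direct skein/twist computation confirms the claim. Fintushel-Stern's formula
\[
SW_{\Sigma_{K_n}}=SW_\Sigma\cdot\Delta_{K_n}(t^2)
\]
combined with irreducibility and $b^+(\Sigma)\ge 2$ then shows that the set of Seiberg-Witten basic classes of $\Sigma_{K_n}$ recovers $\Delta_{K_n}$, so the closed manifolds $\{\Sigma_{K_{2n}}\}$ are pairwise non-diffeomorphic, and similarly for the odd indices.

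The remaining step is descending the non-diffeomorphism from the closed manifolds to the $Z_n$ themselves. Suppose that $Z_{2n}\cong Z_{2m}$ via a diffeomorphism $\psi$. Since $W$ is common, $\psi$ restricts to a self-diffeomorphism $\bar\psi$ of $\partial Z_n\cong S^3_1(\mathrm{granny})$; if $\bar\psi$ extends across $W$, then regluing produces $\Sigma_{K_{2n}}\cong\Sigma_{K_{2m}}$, contradicting the SW computation. The odd family is handled in parallel, with $K$ replaced by the crossing-change $K_1$.

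The main obstacle is therefore showing that every self-diffeomorphism of $\partial Z_n$ extends over the complement $W$. Because $\partial Z_n$ is the $1$-surgery on the granny knot, a rational homology sphere whose mapping class group is finite and explicitly describable via its Seifert/graph decomposition, one can either (i) enlarge $\Sigma$ so that $W$ absorbs each of the finitely many mapping classes, or (ii) sidestep the gluing issue altogether by working with the relative Seiberg-Witten invariant of $(Z_n,\partial Z_n)$, which depends only on $Z_n$ up to diffeomorphism and which inherits the Alexander-polynomial dependence from the Fintushel-Stern formula. Either route converts the boundary flexibility into the desired contradiction and completes the proof.
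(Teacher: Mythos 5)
Your overall strategy---embed $Z_n$ into a family of closed $4$-manifolds distinguished by Seiberg--Witten invariants and then show that any diffeomorphism $Z_n\to Z_m$ propagates to the closed manifolds---is indeed the shape of the paper's argument, but the two steps you leave open are exactly the content of the proof, and as set up your version of the first step does not quite work. You propose embedding $Z_n$ into knot-surgeries $\Sigma_{K_n}$ of a single cusp-containing manifold along a twist family of knots. However, $Z_0=P\cup h_1\cup h_2$ carries \emph{two} extra $2$-handles, and these are realized geometrically as sections of the two elliptic pieces; this is why the paper embeds $Z_n$ into the \emph{link}-surgery $E(1)_{L_n}$ (with $L_n$ the $(2,2n)$-torus link), whose two $E(1)$-summands each supply a section. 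A single knot-surgery $\Sigma_{K_n}$ does not obviously contain a copy of $Z_0$ with fixed complement, and Theorem~\ref{crossingchange} only provides an embedding of $P$, not of $Z_0$; so the assertion ``$Z_n$ embeds in $\Sigma_{K_n}$ with a fixed complement $W$'' needs justification that your proposal does not supply.

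More seriously, you explicitly flag the extension of boundary diffeomorphisms over the complement as ``the main obstacle'' and then only gesture at two possible routes (absorbing a finite mapping class group, or relative SW invariants) without carrying either out; neither is routine, and the second in particular is not developed anywhere in this setting. The paper resolves this step concretely: it exhibits the decomposition $E(1)_{L_n}=Z_n\cup R\cup M_c(2,3,6)\cup M_c(2,3,5)$, where $R=h_3\cup h^3$ is attached along a thickened torus in $\partial Z_n$ whose isotopy class is unique by JSJ theory (so any diffeomorphism $Z_n\to Z_m$ extends over $R$), and $\partial(Z_n\cup R)=\Sigma(2,3,5)\sqcup\Sigma(2,3,6)$, over whose Milnor fiber fillings every self-diffeomorphism extends by the Boileau--Otal computation of the diffeotopy groups together with an explicit symmetry of the handle picture (Gompf's nuclei argument). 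This makes $Z_n$ play the role of a nucleus: any diffeomorphism $Z_n\to Z_m$ extends to $E(1)_{L_n}\to E(1)_{L_m}$, forcing $n=m$ by the Seiberg--Witten formula. Without an argument of this kind your proof is incomplete at its decisive step.
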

\subsection{A twist for a rational tangle replacement.}
Let $K_i$ be a knot or link for $i=1,2$.
$K_2$ is a {\it tangle replacement} of $K_1$, if the local move $K_1\leadsto K_2$ satisfies the following:
\begin{itemize}
\item $K_2$ is a local move of $K_1$ with respect to a closed 3-ball $B^3$ that $K_i$ and $\partial B^3$ transversely intersects at $K_i\cap \partial B^3$.
\item $K_1\cap B^3$ and $K_2\cap B^3$ are proper embeddings of several arcs in $B^3$.
\item the arcs are homotopic to each other by a homotopy that fixes the boundary.
\end{itemize}
The usual crossing change of knots and links is one example of tangle replacements.
{\sc Figure}~\ref{treplace} is a picture of tangle replacement which is described schematically.

In this paper we treat the tangle replacements satisfying the following conditions.
Let $T_i$ denote $K_i\cap B^3$.
\begin{itemize}
\item $\partial T_i\subset \partial B^3$ are four points
\item $B^3\setminus K_1$ is homeomorphic to $B^3\setminus K_2$.
\end{itemize} 
The first example is the case where $B^3\setminus K_i$ is homeomorphic to the genus two handlebody.
We call the replacement {\it rational tangle replacement}.
\begin{figure}[htbp]\begin{center}\includegraphics{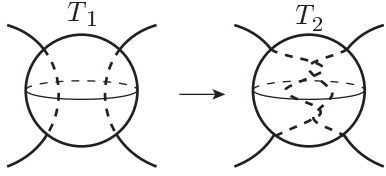}\caption{The tangle replacement $T_1\to T_2$.}\label{treplace}\end{center}\end{figure}

Let $(p,q)$ be relatively prime integers with $p$ even.
We define a diffeomorphism $\varphi_{p,q}:\partial P\to \partial P$ in Section~\ref{rationalvariation}.
The pair $(P,\varphi_{p,q})$ satisfies the following:
\begin{prop}
\label{pluggcork}
Let $p$ be an even integer with $p\neq 0$.
The twist $(P,\varphi_{p,q})$ is an infinite order
$$\begin{cases}\text{plug}&p\equiv 2\bmod 4\ \text{ or}\\\text{g-cork}&p\equiv 0\bmod 4.\end{cases}$$
\end{prop}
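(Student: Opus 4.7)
Since $P$ is Stein by Theorem~\ref{tangeprev}, the conditions to verify for Proposition~\ref{pluggcork} are three: (i) $\varphi_{p,q}$ has infinite order in the mapping class group of $\partial P$; (ii) for $p\equiv 2\bmod 4$, $\varphi_{p,q}$ does not extend to any self-homeomorphism of $P$, while for $p\equiv 0\bmod 4$ it extends topologically but not smoothly; and (iii) there exists an ambient 4-manifold $X$ and an embedding $P\hookrightarrow X$ making $\{X(P,\varphi_{p,q}^k)\}$ a mutually exotic family. I would establish (iii) first, and then read off (i) as a corollary and use it to drive (ii).

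For (iii) and (i), the plan is to imitate the strategy behind Theorem~\ref{crossingchange}. The diffeomorphism $\varphi_{p,q}$ is designed in Section~\ref{rationalvariation} so that the twist $(P,\varphi_{p,q})$ realizes a rational $(p,q)$-tangle replacement on a knot $K$ sitting in a knot-surgered 4-manifold $X_K$. I would take $X$ to be an irreducible symplectic 4-manifold with $b_2^+\ge 2$ (e.g.\ the K3 surface containing the cusp neighborhood $V$), perform the knot-surgery along the general fiber of $V$ for a knot $K$ chosen so that the sequence of knots $K_k$ obtained by $k$ iterated $(p,q)$-rational tangle replacements at a fixed tangle ball have pairwise distinct Alexander polynomials. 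By the Fintushel--Stern formula, the Seiberg--Witten invariants of $X(P,\varphi_{p,q}^k)=X_{K_k}$ then separate the members of the family, so the $X_{K_k}$ are mutually non-diffeomorphic; this forces $\varphi_{p,q}$ to have infinite order on $\partial P$.

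For (ii), the plan is to decompose $\varphi_{p,q}$ as $\rho_{p,q}\circ\varphi^{a(p,q)}$ inside the mapping class group of $\partial P$, where $\rho_{p,q}$ extends over $P$ (coming from those pieces of the rational tangle that can be absorbed into $P$ by handle slides) and $a(p,q)$ is an integer whose parity is controlled by $p\bmod 4$: odd when $p\equiv 2\bmod 4$ and even when $p\equiv 0\bmod 4$. Granted this decomposition, (ii) reduces to the corresponding statement for $\varphi^{a(p,q)}$, which is the content of Theorem~\ref{tangeprev}: an odd power of $\varphi$ inherits the non-extension as a homeomorphism (the plug property), while an even power extends topologically but not smoothly (the g-cork property), the smooth non-extension being already guaranteed by the exotic family produced in (iii).

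The main obstacle is the bookkeeping for the decomposition $\varphi_{p,q}=\rho_{p,q}\circ\varphi^{a(p,q)}$ and the determination of the parity of $a(p,q)$. This should reduce to an explicit mapping-class calculation on $\partial P$ organized around the continued-fraction expansion of $p/q$, tracking how the rational tangle piece interacts with the boundary curves of the plug and confirming that the only obstruction to extension over $P$ is concentrated in the iterate of $\varphi$. This step is technical but routine once the diagram for $\varphi_{p,q}$ from Section~\ref{rationalvariation} is on the table.
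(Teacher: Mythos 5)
Your step (ii) is where the argument breaks. You assert a decomposition $\varphi_{p,q}=\rho_{p,q}\circ\varphi^{a(p,q)}$ with $\rho_{p,q}$ extending over $P$, but no such decomposition is established, and it is not clear one exists. By definition $\varphi_{p,q}=\varphi^{b_N/2}\circ\psi^{b_{N-1}}\circ\cdots\circ\psi^{b_2}\circ\varphi^{b_1/2}$, where $\psi$ is the auxiliary half-twist (the image of $\sigma_1$ in $B_3^0=\langle\sigma_1,\sigma_2^2\rangle$). The map $\psi$ certainly does not extend over $P$ as a diffeomorphism -- the twist $(P,\psi^{b_2})$ changes the resulting knot surgery -- and whether it extends as a homeomorphism is exactly as hard as the question you are trying to reduce to. Moreover, even if you knew the extension status of $\psi$ and of $\varphi^2$ separately, the classes extending to self-homeomorphisms of $P$ form a subgroup of the mapping class group that has no reason to be normal, so you cannot simply commute all the $\varphi$-factors together to isolate a single power $\varphi^{a}$. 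Finally, the parity claim $a(p,q)\equiv p/2\bmod 2$ that you defer as ``routine bookkeeping'' is in fact the number-theoretic heart of the proposition: the paper proves it as a separate lemma, $p\equiv(-1)^{(N-1)/2}(b_1+b_3+\cdots+b_N)\bmod 4$, by an induction on products of continued-fraction matrices mod $4$. The paper's actual route avoids the decomposition entirely: it computes by handle calculus the intersection form of the twisted double $D_{\varphi_{p,q}}(P)$, namely $\oplus^2\left(\begin{smallmatrix}0&1\\1&-\frac12(b_1+b_3+\cdots+b_N)\end{smallmatrix}\right)$, observes via the congruence lemma that this is odd when $p\equiv2\bmod4$ and $\oplus^2H$ when $p\equiv0\bmod4$, and then applies Boyer's theorem to obstruct (resp.\ produce) a topological extension.

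The infinite-order part is also handled differently in the paper, and more economically than you propose. Rather than distinguishing all the iterates $X(P,\varphi_{p,q}^k)$ by Seiberg--Witten invariants -- which would require identifying the 2-bridge knots or links produced by the iterated tangle replacement and verifying that their Alexander polynomials are pairwise distinct, a claim you leave unchecked -- the paper notes that $\varphi_{p,q}$ lies in the image of $B_3^0\cong F_2\rtimes\mathbb{Z}$, which is torsion-free, so it suffices to rule out $\varphi_{p,q}=\mathrm{id}$; this is done by a single Seiberg--Witten computation, since the twist realizes the $K_{p,q}$ link surgery on $E(1)_{O_2}$ and $\Delta_{K_{p,q}}\neq0$ for $p\neq0$. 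Your SW-based plan for the exotic family is in the right spirit (and is needed for condition (3) of the plug definition), but as written both the infinite-order step and the plug/g-cork dichotomy rest on unproven assertions.
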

Let $O_n$ denote the $n$-component unlink.

\begin{thm}
\label{2bridgetheorem}
Let $X$ be a 4-manifold containing $V$ and let $K_{p,q}$ be a non-trivial 2-bridge knot.
Then there exists an embedding $i:P\hookrightarrow V\subset X$ such that the twist $(P,\varphi_{p-1,q})$ with respect to $i$
gives the knot-surgery
$$X:=X_{O_1}\leadsto X(P,\varphi_{p-1,q})=X_{K_{p,q}}.$$

Let $X_i$ be a 4-manifold containing $C$ and let $K_{p,q}$ be a non-trivial 2-bridge link.
Let $X$ be $X_1\# X_2\#S^2\times S^2$.
Then there exists an embedding $j:P\hookrightarrow X$ such that the twist $(P,\varphi_{p,q})$ with respect to $j$
gives the link-surgery
$$X=(X_1,X_2)_{O_2}\leadsto X(P,\varphi_{p,q})=(X_1,X_2)_{K_{p,q}}.$$
\end{thm}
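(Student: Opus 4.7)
The plan is to extend Theorem~\ref{crossingchange} from a single crossing change (the simplest rational tangle replacement $0\leftrightarrow\infty$) to the full rational tangle replacement that converts the unknot, or the two-component unlink, into $K_{p,q}$. Recall that a 2-bridge knot or link is the numerator closure of a rational tangle of slope $p/q$, so the job of $(P,\varphi_{p,q})$ in the knot-surgery picture is to install that tangle in place of a trivial one.

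First I would dispose of the base cases. Since $M_{O_1}=S^1\times S^2$, we have $M_{O_1}\times S^1=T^2\times S^2$, and fiber-summing this with $X$ along $T_m$ identified with the general fiber $T\subset V$ simply returns $X$; hence $X_{O_1}=X$. For the two-component unlink, $M_{O_2}=\#^2(S^1\times S^2)$, and a direct computation of the double fiber-sum gives
\[
(X_1,X_2)_{O_2}\cong X_1\# X_2\#(S^2\times S^2),
\]
which matches the hypothesis on $X$ in the link case. Hence in both cases the twist must start from $X$ and land at the corresponding Fintushel--Stern (link-)surgery.

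Second, I would set up the embedding $i:P\hookrightarrow V$ (resp.\ $j:P\hookrightarrow X$). Inside $V$ the meridian torus of the general fiber $T$ sits so that a Kirby diagram of $V$ displays, near $T_m$, a small tangle region crossed by four strands of the regular fiber; one embeds $P$ so that $\partial P$ encloses exactly this tangle region. By the construction of $\varphi_{p,q}$ in Section~\ref{rationalvariation}, its action on $\partial P$ is engineered so that, when transplanted into the knot-surgery picture, it replaces the trivial tangle inside that disk by the rational tangle of slope $p/q$. This is the direct generalization of the role that $\varphi$ plays for the single crossing change in Theorem~\ref{crossingchange}.

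Third, I would carry out the Kirby-calculus verification. Drawing the handle picture of $X=X_{O_1}$ (resp.\ $X=X_1\# X_2\#(S^2\times S^2)=(X_1,X_2)_{O_2}$) together with the embedded $P$, one applies $(P,\varphi_{p-1,q})$ (resp.\ $(P,\varphi_{p,q})$) and executes handle slides and cancellations. The altered diagram then presents the numerator closure of a rational tangle of slope $p/q$ inside the knot-surgery (resp.\ link-surgery) template, which by Schubert's normal form is $K_{p,q}$. The shift $p\mapsto p-1$ in the knot case versus $p\mapsto p$ in the link case reflects the difference between inserting the tangle into the single component of $O_1$ and the two components of $O_2$, and also determines whether $p$ is even or odd so that $K_{p,q}$ is a knot or a link. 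The main obstacle is precisely this diagrammatic step: checking that the global boundary diffeomorphism $\varphi_{p,q}$, which was defined intrinsically on $\partial P$, really does implement the rational tangle replacement of the asserted slope after being pushed into the surgery diagram. Once this is established, Proposition~\ref{pluggcork} guarantees that the resulting twist is indeed of plug or g-cork type, completing the proof.
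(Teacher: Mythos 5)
Your proposal follows essentially the same route as the paper: the paper splits the theorem into Propositions~\ref{knot1} and \ref{link1}, fixes an embedding of $P$ into $V$ (resp.\ into $C\# C\# S^2\times S^2$) around the braid region of the fiber, and checks by Kirby calculus that the factors $\varphi^{b_{2k+1}/2}$ and $\psi^{b_{2k}}$ of $\varphi_{p,q}$ successively insert the half-twists of the $3$-braid $B_{p,q}$ whose plat closure is $K_{p,q}$, exactly the tangle-installation you describe. One small correction: the shift $p\mapsto p-1$ in the knot case is not really about one versus two link components, but about the fact that $\varphi_{p,q}$ is only defined for $p$ even (the odd-indexed entries $b_{2k+1}$ must be even to form $\varphi^{b_{2k+1}/2}$), so for a knot ($p$ odd) the embedding $i$ is chosen to carry one extra half-twist and $\varphi_{p-1,q}$ supplies the rest.
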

This is a generalization of the result (Theorem~\ref{tangeprev}) that $(P,\varphi)$ is a plug and $(P,\varphi^2)$ is a g-cork.
Namely, the case of $(p,q)=(2n,1)$ corresponds to the equality $\varphi_{2n,1}=\varphi^n$.

By combining the twist and the inverse in Theorem~\ref{2bridgetheorem} we also obtain a general rational tangle replacement 
$$X_K\overset{\varphi_{p,q}^{-1}}{\leadsto} X_{O_1}\overset{\varphi_{r,s}}{\leadsto}X_{K'}.$$
\subsection{A twist for mutant knots.}
\label{mutantintro}
We call an involutive tangle replacement as in {\sc Figure}~\ref{knotm} {\it knot mutation} and we call two knots $K,K'$ which are obtained by the knot mutation {\it mutant knots}.
\begin{figure}[htbp]
\begin{center}
\includegraphics{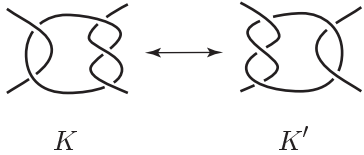}
\caption{A knot mutation.}
\label{knotm}
\end{center}
\end{figure}
It is well-known that mutant knots have similar topological properties.
Any two mutant knots have the same hyperbolic volume and HOMFLY polynomial,
in particular, the same Alexander polynomial.

The next variation of $(P,\varphi)$ is a twist between knot-surgeries for any two mutant knots.
The knot mutation is not a rational tangle replacement, because the local tangle complement is not homeomorphic to a handlebody.
Indeed, compute the fundamental group of the local tangle complement.
We found a twist $(M,\mu)$ of 4-manifold between the knot-surgeries for two mutant knots.
Let $M$ be a 4-manifold described by {\sc Figure}~\ref{m}.
A map $\mu:\partial M\to \partial M$ is defined in Section~\ref{OtherM}.
From the diagram in {\sc Figure}~\ref{m}, we can prove that $M$ is an oriented, simply-connected 4-manifold with $\partial M=\partial P\#S^2\times S^1$, $H_\ast(M)\cong H_\ast(\vee^3S^2)$, $b_3(M)=0$.
$\partial M\cong \partial P\#S^2\times S^1$ is described in {\sc Figure}~\ref{partialMM}.
\begin{figure}[htbp]
\begin{center}
\includegraphics{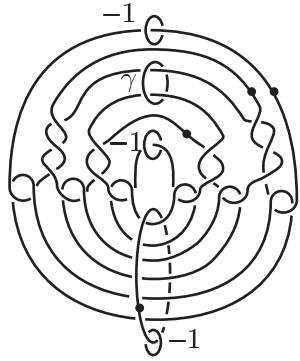}
\caption{The manifold $M$.}
\label{m}
\end{center}\end{figure}
\begin{figure}\begin{center}\includegraphics{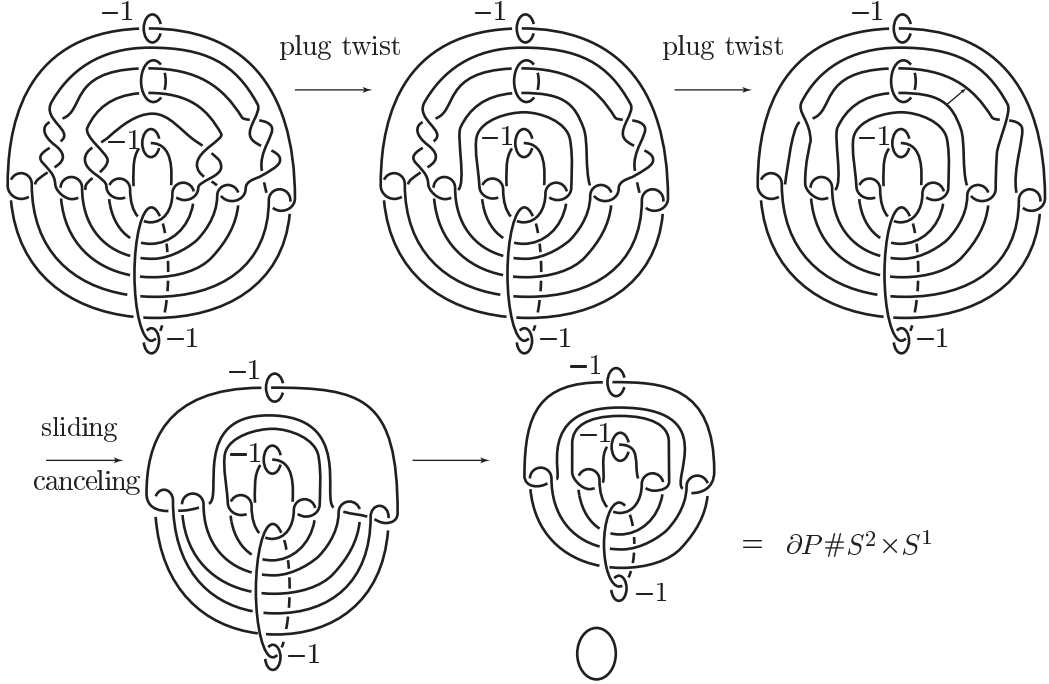}\caption{A diffeomorphism $\partial M\cong \partial P\#S^2\times S^1$}\label{partialMM}\end{center}\end{figure}

\begin{thm}
\label{Mmutant}
Let $X$ be a 4-manifold containing $V$.
Let $K,K'$ be any mutant knots.
Then there exist a twist $(M,\mu)$ and an inclusion $i:M\hookrightarrow X_K$ such that
the square of the gluing map $\mu:\partial M\to \partial M$ is homotopic to the trivial map on $\partial M$ and changes the knot-surgeries as follows:
$$X_{K'}=X_K(M,\mu).$$
\end{thm}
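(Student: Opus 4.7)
The plan is to realize the knot mutation $K \leadsto K'$ as a cut-and-paste performed inside the fiber-sum piece $M_K \times S^1$ of $X_K$. Recall that such a mutation is carried out by choosing a Conway sphere $S \subset (S^3, K)$ meeting $K$ transversely in four points and regluing the two tangle complements via an involution $\tau$ of $(S^2, 4 \text{ points})$. Under the $0$-surgery $M_K$, the four punctures of $S \setminus K$ are capped off by disks coming from the replacement solid torus, so $S$ becomes a closed surface $\Sigma$ in $M_K$, and $\Sigma \times S^1$ is a closed hypersurface in $M_K \times S^1 \subset X_K$ along which $\tau \times \mathrm{id}_{S^1}$ realizes the mutation at the $0$-surgery level.

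I would begin by identifying a thickened cutting region: take a regular neighborhood $N$ of one side of $\Sigma \times S^1$ in $M_K \times S^1$, enlarged so as to contain a copy of the plug $P$ embedded as in Theorem~\ref{crossingchange} (to host any auxiliary crossing changes that arise in passing between the two tangles). A direct handle-calculus identification, obtained by simplifying the diagram in Figure~\ref{m}, should identify $N$ with the $4$-manifold $M$ of the statement and yield $\partial N \cong \partial P \# S^2 \times S^1$ as in Figure~\ref{partialMM}, with the $S^2 \times S^1$ summand coming from the capped-off Conway sphere crossed with the $S^1$-factor.

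Next, I would define $\mu$ piecewise: on the $S^2 \times S^1$ summand of $\partial M$ as $\tau \times \mathrm{id}_{S^1}$, and on the $\partial P$ summand as the identity or some specific power of $\varphi$ needed to match framings across the connect-sum neck, with the details carried out in Section~\ref{OtherM}. The identity $X_{K'} = X_K(M, \mu)$ then follows by checking that cutting along $\partial M$ and regluing by $\mu$ has the effect on $M_K$ of replacing one tangle by its $\tau$-image, and that the resulting $M_{K'} \times S^1$ fiber-sums to give $X_{K'}$.

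The main obstacle is verifying that $\mu^2$ is homotopic to the identity on $\partial M$. Although $(\tau \times \mathrm{id})^2 = \mathrm{id}$ on the $S^2 \times S^1$ summand, the $\partial P$ contribution must be shown to vanish up to homotopy, a mapping class group computation on $\partial P \# S^2 \times S^1$. This weaker conclusion (homotopy rather than isotopy) is consistent with the remark in Section~\ref{mutantintro} that $(M, \mu)$ is neither a cork nor a plug, since $M$ is not Stein: the homotopy triviality of $\mu^2$ is the residual trace of the order-two symmetry of mutation at the $4$-manifold level.
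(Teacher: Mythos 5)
There is a genuine gap, and in fact the opening topological claim is wrong. You assert that under $0$-surgery the four punctures of the Conway sphere $S\setminus K$ are ``capped off by disks coming from the replacement solid torus,'' so that $S$ closes up to a surface $\Sigma\subset M_K$. The boundary circles of $S\setminus\nu(K)$ are \emph{meridians} of $K$, and in $0$-surgery it is the ($0$-framed) \emph{longitude}, not the meridian, that bounds a disk in the replacement solid torus; the meridian becomes isotopic to the core of that torus. So the hypersurface $\Sigma\times S^1$ you want to cut along does not exist as described. Even if one repairs this, the region $N$ you propose cannot be the manifold $M$ of the statement: the closure of one side of such a hypersurface contains an entire tangle complement crossed with $S^1$ and so depends on $K$, while a bicollar of the hypersurface is a product with $b_3\neq 0$ and nontrivial $\pi_1$, whereas $M$ is a fixed simply connected $4$-manifold with $H_\ast(M)\cong H_\ast(\vee^3S^2)$ and $b_3(M)=0$. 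The sentence ``a direct handle-calculus identification should identify $N$ with $M$'' is precisely the content of the theorem and is not supplied.

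The paper's route is different and avoids these issues: it works entirely diagrammatically, isotoping the mutation tangle of a diagram of $K$ to the bottom of a plate presentation to produce the embedding $M\hookrightarrow V_K\subset X_K$, and it defines $\mu$ not piecewise on the connected summands of $\partial M\cong\partial P\#S^2\times S^1$ but as a composition of three moves coming from a factorization of the mutation: two locally performed copies of $\varphi$ and $\varphi^{-1}$ (the maps $\tilde\varphi_1,\tilde\varphi_2$) followed by a half-rotation along a torus. The point you flag as ``the main obstacle'' --- that $\mu^2$ is homotopic to the identity --- is then immediate from that definition: $\mu^2$ reduces to twice the final move, i.e.\ a $360^{\circ}$ rotation of $\partial M$ along the torus, which is homotopic to the identity. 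Your piecewise $\mu$ (namely $\tau\times\mathrm{id}$ on $S^2\times S^1$ together with ``the identity or some power of $\varphi$'' on $\partial P$) would not obviously enjoy this property: if the $\partial P$ part were a nontrivial power of $\varphi$, its square would be $\varphi^{2k}$, which is of infinite order and not homotopically trivial. So the proposal both starts from a false premise and leaves the two essential verifications (the identification of the cut region with $M$, and the homotopy triviality of $\mu^2$) unaddressed.
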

\begin{prop}
\label{Mmutant2}
The map $\mu:\partial M\to \partial M$ extends to a self-homeomorphism on $M$.
\end{prop}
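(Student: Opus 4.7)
The plan is to invoke Boyer's extension theorem for simply-connected compact topological 4-manifolds with boundary. It states that a boundary self-homeomorphism $f\colon\partial W\to\partial W$ extends to a self-homeomorphism of $W$ if and only if there exists an isometry $F$ of the intersection form on $H_2(W;\mathbb{Z})$ compatible with $f_\ast$ via the connecting homomorphism $\partial\colon H_2(W;\mathbb{Z})\to H_1(\partial W;\mathbb{Z})$, together with the vanishing of a secondary cohomological obstruction. Since $M$ is simply-connected with $H_\ast(M)\cong H_\ast(\vee^3S^2)$ and $b_3(M)=0$, and $\mu^2$ is homotopic to $\mathrm{id}$ on $\partial M$ by Theorem~\ref{Mmutant}, Boyer's hypotheses apply to the pair $(M,\mu)$.

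The main steps are as follows. First, I would read off the intersection form of $M$ from the diagram in Figure~\ref{m}: with $b_2(M)=3$ it is a $3\times 3$ symmetric integer matrix determined by framings and linking numbers of the $2$-handles. Second, using the explicit construction of $\mu$ from Section~\ref{OtherM} together with the decomposition $\partial M\cong \partial P\#S^2\times S^1$ of Figure~\ref{partialMM}, I would compute the induced action $\mu_\ast$ on $H_\ast(\partial M)$. The underlying knot-mutation symmetry should make $\mu_\ast$ an involution that transposes (up to sign) the boundary generators coming from the two tangles exchanged by mutation. Third, I would exhibit an order-two form-preserving involution $F\colon H_2(M;\mathbb{Z})\to H_2(M;\mathbb{Z})$ satisfying $\partial\circ F=\mu_\ast\circ\partial$; the natural candidate is the transposition (possibly with signs) of the two basis $2$-cycles dual to the mutation axis in the handle diagram of Figure~\ref{m}. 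Finally, I would verify that Boyer's secondary obstruction vanishes; this should be automatic once the primary data match, since $H_2(M)$ is torsion-free and the relevant obstruction group for the simply-connected $M$ is trivial.

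The main obstacle is the concrete identification in the third step: one must pin down a basis of $H_2(M)$ adapted to the handle diagram, track the boundaries of these basis elements through the connecting homomorphism into $H_1(\partial M)$, and verify that the candidate involution $F$ simultaneously preserves the intersection form and commutes with $\mu_\ast$ on the boundary. Once this dictionary is set up, the intersection-form compatibility reduces to a small explicit matrix check; the extension of $\mu$ to a self-homeomorphism of $M$ then follows from Boyer's theorem.
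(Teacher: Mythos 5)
Your overall framework---reduce the extension question to Boyer's classification of simply-connected $4$-manifolds with a given boundary---is the same external input the paper uses, but your implementation is different from, and less complete than, the paper's. The paper does not attempt to construct an isometry $F$ of $(H_2(M),Q_M)$ compatible with $\mu_*$ directly. Instead it forms the twisted double $D_\mu(M)=M\cup_\mu(-M)$ from an explicit handle picture ({\sc Figure}~\ref{muinverse}), checks by handle calculus that $D_{\mathrm{id}}(M)\cong\#^3S^2\times S^2$ and that $Q_{D_\mu(M)}\cong\oplus^3H$, and then invokes Boyer. This packages the compatibility conditions you describe into a single closed-manifold intersection-form computation and avoids having to track $\mu_*$ on $H_1(\partial M)$ and the connecting homomorphism at all --- a real advantage here, since $\partial M\cong\partial P\#S^2\times S^1$ has $b_1=1$ and $H_2(\partial M)\neq 0$, so the boundary homological bookkeeping (and the ``secondary obstruction'') is not as innocuous as your plan assumes.

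The genuine gap in your proposal is that the decisive step is only conjectured, not carried out: you posit that ``the natural candidate'' involution of $H_2(M)$ transposing two basis classes will preserve $Q_M$ and intertwine with $\mu_*$, and that the remaining obstruction ``should be automatic.'' Neither can be taken for granted. The existence of a compatible isometry is exactly the condition that separates boundary maps that extend topologically from those that do not --- in this very paper the map $\varphi$ on $\partial P$ fails to extend even as a homeomorphism (Theorem~\ref{tangeprev}), so the verification is where the entire content of the proposition lives. Moreover your heuristic picture of $\mu$ as ``transposing the two tangles exchanged by mutation'' does not obviously match the actual definition of $\mu$ in Section~\ref{OtherM}, which is built from the local maps $\tilde\varphi_1,\tilde\varphi_2$ and a rotation, so the candidate $F$ would need to be derived from that factorization rather than guessed. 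To close the gap you would either have to do the explicit basis/connecting-homomorphism computation you defer, or switch to the paper's route and compute $Q_{D_\mu(M)}$ from the glued handle diagram.
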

\begin{rmk}
It are two subtle problems whether $\mu$ extends to a self-diffeomorphism on $M$.
One reason is what $M$ {\it not} be a Stein manifold.
In fact any Stein filling of a reduced 3-manifold is a boundary-sum of Stein fillings of the connected-sum components of the 3-manifold
due to \cite{[E]}.
It is well-known that the Stein filling of the $S^2\times S^1$ component must be diffeomorphic to $D^3\times S^1$ due to \cite{[Eli2]}.
$S^2\times S^1$ is a connected-sum component of $\partial M$.
These facts and $\pi_1(M)=e$ conclude that $M$ never have any Stein structure.
Therefore, if you are to interpret knot-surgeries for mutant knots as a plug or cork twist,
then you must improve the construction of $M$.

Another reason is what for mutant knots $K$ and $K'$, the Seiberg-Witten invariants are the same by Fintushel-Stern's formula in \cite{[FS]}.
\end{rmk}
\begin{rmk}
If $\mu$ can extend to $M$ as a diffeomorphism, then two knot-surgeries of all pairs of
mutant knots are diffeomorphic to each other.
Unlike the examples by Akbulut~\cite{[A1]}, and Akaho~\cite{Akh}, this diffeomorphism
suggests a meaningful map coming from knot mutation.

If $\mu$ cannot extend to inside $M$ as any diffeomorphism, then $(M,\mu)$ would be 
a not-Stein g-cork giving a subtle effect.
\end{rmk}
The twisted double $D_\mu(M):=M\cup_{\mu}(-M)$ is homeomorphic to $\#^3S^2\times S^2$.
Its diffeomorphism type is not-known.
$D_\mu(M)$ has one connected-sum component of $S^2\times S^2$, i.e. it is not irreducible.

\begin{prop}
\label{twisteddouble}
$M_0$ be a 4-manifold $M$ with a 2-handle deleted and let $\mu_0$ be
a boundary diffeomorphism $\partial M_0\to \partial M_0$ naturally induced from $\mu$.
Then we have $D_{\mu}(M)=D_{\mu_0}(M_0)\#S^2\times S^2$.
\end{prop}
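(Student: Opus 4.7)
The plan is to exhibit two $2$-spheres in $D_\mu(M)$ meeting transversely at a single point, both with trivial normal bundles, and to identify the complement of their regular neighborhood with $D_{\mu_0}(M_0)\setminus\mathrm{int}(D^4)$. Since $\partial M\cong\partial P\#S^2\times S^1$ while $\partial M_0\cong\partial P$, the deleted 2-handle $h$ is attached along a $0$-framed unknot $\gamma\subset\partial M_0$ sitting in a $3$-ball disjoint from the remaining attaching data; in particular $\gamma$ bounds an embedded disk $D'\subset\partial M_0$. The core disk $D_c\subset h$ and $D'$ glue along $\gamma$ to give an embedded sphere $\Sigma\subset M$ with trivial normal bundle (the $0$-framing of $h$ matches the Seifert framing of the unknot $\gamma$), and a regular neighborhood of $\Sigma$ in $M$ is a copy of $D^2\times S^2$ accounting for the $S^2\times S^1$-summand of $\partial M$.

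Using the naturality of $\mu_0$, I would isotope $\mu$ so that it carries the belt region $D^2\times\partial D^2$ of $h\subset M$ onto the belt region of $-h\subset -M$ in a framing-preserving way. Then
$$N:=h\cup_\mu(-h)\cong D^2\times D^2\cup_{D^2\times\partial D^2}D^2\times D^2=D^2\times S^2$$
embeds in $D_\mu(M)$, and the union of the cocores $S:=\{0\}\times S^2\subset N$ is a second embedded sphere with trivial normal bundle. The spheres $\Sigma$ and $S$ both pass through the centre of $h$ (one along the core direction, the other along the cocore); outside of $h$, $\Sigma\setminus h\subset M_0$ and $S\setminus h\subset -h\subset -M$ lie in disjoint regions, so the two spheres intersect transversely at exactly one point. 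Hence a regular neighborhood of $\Sigma\cup S$ in $D_\mu(M)$ is diffeomorphic to $S^2\times S^2\setminus\mathrm{int}(D^4)$.

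It remains to identify $D_\mu(M)\setminus\nu(\Sigma\cup S)$ with $D_{\mu_0}(M_0)\setminus\mathrm{int}(D^4)$. On the $M$-side, $\nu(\Sigma\cup S)$ absorbs the handle $h$ (which lies in both $\nu(\Sigma)$ and $\nu(S)$) together with a collar of the disk $D'$, leaving $M_0\setminus\mathrm{int}(D^4)$; on the $-M$-side, $\nu(\Sigma\cup S)$ absorbs $-h$, leaving $-M_0$. These two pieces are reglued along $\partial M\setminus\nu(\beta)$, where $\beta$ is the belt of $h$, via $\mu$; by the naturality of $\mu_0$ this residual gluing descends through the surgery $\partial M\leadsto\partial M_0$ to the map $\mu_0$ on $\partial M_0$. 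We conclude
$$D_\mu(M)=\bigl(D_{\mu_0}(M_0)\setminus\mathrm{int}(D^4)\bigr)\cup_{S^3}\bigl(S^2\times S^2\setminus\mathrm{int}(D^4)\bigr)=D_{\mu_0}(M_0)\#S^2\times S^2.$$
The main technical subtlety is this final gluing step together with the claim that $S$ has \emph{trivial} (rather than twisted) normal bundle; both hinge on the precise meaning of ``naturally induced'' $\mu_0$, namely that $\mu$ preserves a tubular product neighborhood of $\beta$ and matches framings when identifying the belt regions of $h$ and $-h$.
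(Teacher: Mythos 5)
Your construction rests on the assumption that the deleted $2$-handle is a $0$-framed handle attached along an unknot split off in a $3$-ball, so that $M\cong M_0\natural(S^2\times D^2)$ and its attaching circle $\gamma$ bounds a disk $D'\subset\partial M_0$. That assumption is false for this $M_0$. First, the handle actually deleted is the $-1$-framed $2$-handle of $M$, not a $0$-framed one. Second, and fatally, $\gamma$ is not even null-homotopic in $M_0$: the paper records that the untwisted double $D_{\mathrm{id}}(M_0)$ has an $S^3\times S^1$ connected summand, hence $\pi_1(D_{\mathrm{id}}(M_0))\neq 1$, whereas if $\gamma$ bounded a disk in $\partial M_0$ then $\pi_1(M_0)\cong\pi_1(M)=1$ and $D_{\mathrm{id}}(M_0)$ would be simply connected; equivalently, $M\cong M_0\natural(S^2\times D^2)$ would force $D_{\mathrm{id}}(M)\cong D_{\mathrm{id}}(M_0)\#(S^2\text{-bundle over }S^2)$ to have $\pi_1\cong\mathbb{Z}$, contradicting $D_{\mathrm{id}}(M)\cong\#^3S^2\times S^2$. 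So your sphere $\Sigma$ does not exist, the claim $\partial M_0\cong\partial P$ is unsupported, and the identification of $\nu(\Sigma\cup S)$ and of its complement collapses. A further point you flag but do not resolve is the framing parity: since the deleted handle is $-1$-framed, even in a situation where the doubling heuristic applied one would a priori expect a $\mathbb{CP}^2\#\overline{\mathbb{CP}^2}$ summand rather than $S^2\times S^2$, and ruling that out requires an actual computation.

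The paper's proof is a concrete handle-diagram computation in $D_\mu(M)$ ({\sc Figure}~\ref{doublehan}): the $-1$-framed $2$-handle together with the $0$-framed $2$-handle Hopf-linked to it is moved, by several handle slides, into a position parallel to the other Hopf-linked pair, after which this pair splits off as a separate Hopf link with both framings $0$; that split $0$--$0$ Hopf link is an $S^2\times S^2$ summand and the remaining diagram is exactly $D_{\mu_0}(M_0)$. Note that the $S^2\times S^2$ summand produced this way is not the double of the deleted handle alone --- it genuinely involves the auxiliary $0$-framed handle --- which is why the handle slides (or some equivalent geometric input locating the two spheres after the slides) cannot be bypassed by the capping-off argument you propose.
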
15756
Here we summarize several questions.
\begin{que}
Can the map $\mu$ extend to a self-diffeomorphism on $M$?
\end{que}
\begin{que}
Is $D_\mu(M)$ (or $D_{\mu_0}(M_0)$) an exotic $\#^3S^2\times S^2$ (or $\#^2S^2\times S^2$)?
\end{que}
\begin{que}
For mutant knots $K$ and $K'$, which twist $({\mathcal M},\phi)$ can realize the deformation $X_K\leadsto X_{K'}$ as a cork or plug twist?
\end{que}
\section*{Acknowledgements}
I thank Kouichi Yasui and Yuichi Yamada for giving me some useful suggestions and advice.
Specially, Yasui pointed me out that $M$ is not Stein manifold.
\section{Exotic enlargements.}
\subsection{The homeomorphism types.}
\label{YZ}
We consider four homeomorphism types $Y_0$, $Y_1$, $Z_0$ and $Z_1$ of the enlargement of $P$.
\begin{lem}
\label{intersectionform}
The intersection forms of $Y_n$ and $Z_n$ are as follows:
$$Q_{Y_n}\cong\begin{cases}\langle0\rangle\oplus H&n:\text{odd}\\\langle0\rangle\oplus\langle1\rangle\oplus \langle-1\rangle&n:\text{even,}\end{cases}$$
$$Q_{Z_n}=\begin{cases}\oplus^2H&n\text{:odd}\\\oplus^2\langle1\rangle\oplus^2 \langle -1\rangle&n\text{:even,}\end{cases}$$
where $H$ is the quadratic form presented by {\footnotesize $\begin{bmatrix}0&1\\1&0\end{bmatrix}$}.
\end{lem}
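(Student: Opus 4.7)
The plan is to reduce to the cases $n\in\{0,1\}$ using the g-cork structure of $(P,\varphi^2)$, and then to compute the four intersection forms $Q_{Y_0}$, $Q_{Y_1}$, $Q_{Z_0}$, $Q_{Z_1}$ from Kirby diagrams.

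By Theorem~\ref{tangeprev}, $\varphi^2$ extends to a self-homeomorphism of $P$, which yields the homeomorphisms (\ref{Yhomeo}) and (\ref{Zhomeo}): $Y_{n+2}\simeq Y_n$ and $Z_{n+2}\simeq Z_n$. Since the intersection form is a homeomorphism invariant, it depends only on the parity of $n$, and it suffices to compute the four forms at $n=0,1$. For $n=0$, the splittings $Y_0=\tilde Y\#\overline{\mathbb{C}P^2}$ and $Z_0=\tilde Z\#^2\overline{\mathbb{C}P^2}$ stated in Section~2 isolate the $(-1)$-framed handles $h_i$ as blow-ups, so $Q_{Y_0}=Q_{\tilde Y}\oplus\langle-1\rangle$ and $Q_{Z_0}=Q_{\tilde Z}\oplus\langle-1\rangle\oplus\langle-1\rangle$, and both forms are necessarily odd. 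Reading the linking matrices of $\tilde Y$ and $\tilde Z$ from Figure~\ref{boundary} and using $b_1(\partial Y_0)=1$ (which forces a one-dimensional null subspace in $Q_{Y_0}$) together with $b_1(\partial Z_0)=0$ (which forces $Q_{Z_0}$ to be non-degenerate of signature $0$) pins down the rank-two non-degenerate pieces as $\langle 1\rangle$ and $\langle 1\rangle\oplus\langle-1\rangle$ respectively, yielding the claimed forms.

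For $n=1$, the twist $(P,\varphi)$ replaces the attaching circles of $h_1$ (and $h_2$) in $\partial P$ with their images under $\varphi$. Using the explicit description of $\varphi$ in Figure~\ref{varphi}, one draws $\varphi(h_1)$ and $\varphi(h_2)$ into the Kirby diagrams of $Y_1$ and $Z_1$, simplifies by isotopies and handle slides on the 2-handles of $P$, and reads off the new linking matrix (with the modified framings on the diagonal). The expected output is that the odd rank-two summand $\langle 1\rangle\oplus\langle-1\rangle$ of the $n=0$ case is replaced by the hyperbolic summand $H$, so the forms of $Y_1$ and $Z_1$ are even. The boundary homology already forces the same rank, signature, and nullity as at $n=0$, so the remaining content is just the parity of the diagonal entries after simplification.

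The main obstacle is this last Kirby-calculus step: one must correctly track $\varphi(h_1)$ and $\varphi(h_2)$ through Figure~\ref{varphi}, isotope them into standard position, and verify that the resulting linking matrix has an even rank-two block rather than a diagonal one. The homological invariants (rank, signature, nullity) are dictated by the boundary and the splittings of $Y_0$, $Z_0$, but the evenness of $Q_{Y_1}$ and $Q_{Z_1}$ versus the oddness of $Q_{Y_0}$ and $Q_{Z_0}$ is a genuinely geometric statement whose proof requires carrying out the diagrammatic computation explicitly.
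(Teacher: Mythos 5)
Your overall strategy coincides with the paper's: reduce to $n=0,1$ via the homeomorphisms $Y_{n+2}\simeq Y_n$, $Z_{n+2}\simeq Z_n$, read off $Q_{Y_0}$ and $Q_{Z_0}$ from the handle diagram (the paper does this directly from Figure~\ref{PH}; your blow-up splitting plus boundary-homology argument is a slightly more roundabout but acceptable substitute), and then compute the odd case from a diagram of $Y_1$ and $Z_1$. The problem is that you stop exactly where the content of the lemma begins. You correctly observe that rank and nullity are forced, and that for the rank-two indefinite non-degenerate part the only remaining question is parity; but you then merely assert that ``the expected output'' is an even form and concede that establishing this ``requires carrying out the diagrammatic computation explicitly.'' That computation is the proof. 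The paper supplies it: Figure~\ref{F} is an explicit Kirby diagram of $Z_1$ (and of $Y_1$ after erasing the $-2$-framed component) from which the hyperbolic blocks are visible in the linking matrix. Without producing such a diagram, or some other argument for evenness, you have only shown that $Q_{Y_1}$ is one of $\langle 0\rangle\oplus H$ or $\langle 0\rangle\oplus\langle 1\rangle\oplus\langle -1\rangle$.

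This gap cannot be closed by softer means, because the parity jump between $n$ even and $n$ odd is precisely the geometric phenomenon the lemma records: it is what later (via Boyer's theorem, as in the proof of Proposition~\ref{pluggcork}) distinguishes a plug twist from a g-cork twist, so it is not detectable from the boundary $3$-manifold or from rank/signature/nullity alone. A secondary point: your claim that ``boundary homology already forces the same \ldots signature'' at $n=1$ is not justified as written, since the signature of a $4$-manifold is not an invariant of its boundary. It is true here, but you should derive it either from Novikov additivity applied to the decomposition $Y_1=(Y_0-P)\cup_\varphi P$ (both sides are built from the same two pieces), or simply from the explicit linking matrix of the $n=1$ diagram --- which you need anyway for the parity.
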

{\bf Proof of Proposition~\ref{XYZ}.}
Lemma~\ref{intersectionform}, (\ref{Yhomeo}), and (\ref{Zhomeo}) imply
the required assertion.
\hfill\qed

{\bf Proof of Lemma~\ref{intersectionform}.}
From the homeomorphisms (\ref{Yhomeo}) and (\ref{Zhomeo}), we may consider homeomorphism types $Y_0$, $Y_1$ and $Z_0$, $Z_1$ respectively.
From the picture in {\sc Figure}~\ref{PH} together with 2-handles, the intersection forms of $Y_0$ and $Z_0$ can be immediately seen $\langle 0\rangle\oplus \langle 1\rangle \oplus \langle -1\rangle$ and 
$\oplus^2\langle 1\rangle\oplus^2 \langle -1\rangle$ respectively.
The diagram of $Z_1$ is the left of {\sc Figure}~\ref{F} and the diagram of $Y_1$ is {\sc Figure}~\ref{F} with the $-2$-framed component erased.
Hence, the intersection forms of $Y_1$ and $Z_1$ are isomorphic to $\langle 0\rangle \oplus H$ and $\oplus^2H$ respectively.
\hfill\qed
\begin{figure}[htbp]
\begin{center}
\includegraphics{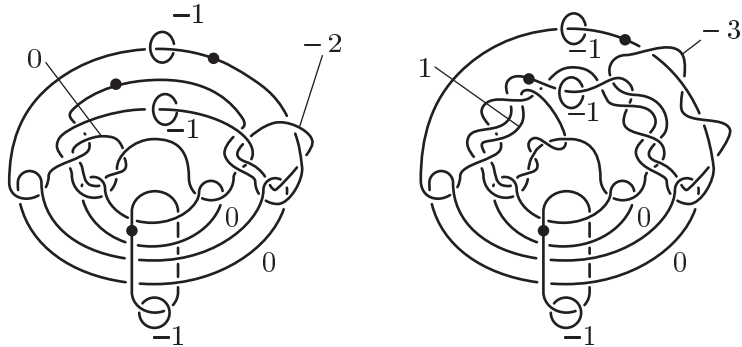}
\caption{$Z_1$ and $Z_2$.}
\label{F}
\end{center}
\end{figure}
\subsection{Infinitely many exotic structures on $Y_{0}$ and $Y_1$.}
Fintushel and Stern in \cite{[FS]} computed the Seiberg-Witten invariant of the link-surgery.
Let $L_n$ be the $(2,2n)$-torus link, in particular, the $(2,2)$-torus link is the Hopf link.
Then the Seiberg-Witten invariant is as follows:
\begin{equation}
\label{SWformula}
SW_{E(1)_{L_{n}}}=\Delta_{L_{n}}(t_1,t_2)=(t_1t_2)^{n-1}+(t_1t_2)^{n-3}+\cdots+(t_1t_2)^{-n+1}.
\end{equation}
Thus, the basic classes are the following:
\begin{equation}
\label{basicE1n}
\mathcal{B}_{E(1)_{L_n}}=\{i(t_1+t_2)|i=-n+1,n+3,\cdots,n-1\}.
\end{equation}
Each variable $t_i\in H^2(E(1)_{L_n})$ is the Poincar\'e dual $PD(2[T_i])$.
The submanifolds $T_1$, and $T_2$ are general fibers of the two copies of $E(1)$.
This implies that $E(1)_{L_n}$ are mutually non-diffeomorphic manifolds.
We prove the following lemma about $E(1)_{L_n}$:
\begin{lem}
\label{minimal}
For any positive integer $n$, $E(1)_{L_n}$ is an irreducible symplectic manifold.
\end{lem}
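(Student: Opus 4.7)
The plan is to establish, first, that $E(1)_{L_n}$ is symplectic, and then to prove smooth irreducibility via the Seiberg--Witten blow-up formula applied to the explicit basic classes in (\ref{basicE1n}).

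For the symplectic structure, $L_n=T(2,2n)$ is a fibered link in $S^3$, so $M_{L_n}$ fibers over $S^1$; Thurston's construction equips $M_{L_n}\times S^1$ with a symplectic form for which each torus $T_{m_i}=m_i\times S^1$ is symplectic. In each copy of $E(1)$, a general fiber $T_i$ is already a symplectic torus. Gompf's symplectic sum theorem, applied first at $T_1=T_{m_1}$ and then at $T_{m_2}=T_2$, thus produces a symplectic structure on $E(1)_{L_n}$. A standard van Kampen argument using the vanishing cycles in nearby cusp fibers of $E(1)$ shows $\pi_1(E(1)_{L_n})=1$, and the usual signature/Euler characteristic computation for symplectic sums along tori gives $b_2^+(E(1)_{L_n})\geq 2$ (for $n=1$ one recovers $K3$, with $b_2^+=3$).

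For irreducibility, suppose $E(1)_{L_n}\cong X_1\# X_2$ smoothly with neither summand a homotopy $4$-sphere. Non-vanishing of $SW_{E(1)_{L_n}}$ in (\ref{SWformula}) together with the standard vanishing theorem for connected sums with $b_2^+\geq 1$ on both sides forces one summand to be negative definite; Donaldson's diagonalizability then gives $X_2\cong\#^m\overline{{\Bbb C}P^2}$ with some $m\geq 1$. Thus $E(1)_{L_n}$ is an $m$-fold smooth blow-up, and by the Seiberg--Witten blow-up formula the set of basic classes is invariant under the reflection $E_1\mapsto -E_1$ with no fixed classes; in particular there are two basic classes whose difference is $\pm 2E_1$, a class of square $-4$. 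On the other hand, (\ref{basicE1n}) writes every basic class as an integer multiple of $t_1+t_2$, and since $T_1,T_2$ lie in the disjoint $E(1)$-summands of the fiber sum and have trivial normal bundles,
$$(t_1+t_2)^2=4\bigl(T_1^2+2\,T_1\cdot T_2+T_2^2\bigr)=0.$$
Every difference of basic classes is therefore isotropic, contradicting the appearance of a difference of square $-4$. Hence $m=0$ and $E(1)_{L_n}$ is irreducible. (For $n=1$ there is only a single basic class, so the hypothesis $m\geq 1$ is already incompatible with the blow-up formula producing a pair of distinct basic classes.)

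The main obstacle is confirming the geometric details underlying (\ref{basicE1n}): that $T_1$ and $T_2$ persist in $E(1)_{L_n}$ as disjoint embedded symplectic tori with trivial normal bundles, and that the listed basic classes are exhaustive. The iterated fiber sum and the Mayer--Vietoris computation of $b_2^+$ take some bookkeeping but are standard; the real content is the Seiberg--Witten input from Fintushel--Stern recorded in (\ref{SWformula}), which is already granted.
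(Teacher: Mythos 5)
Your argument is correct in substance, and its heart is the same computation the paper uses: every difference of the basic classes listed in (\ref{basicE1n}) is a multiple of $t_1+t_2$, which has square zero because $T_1$ and $T_2$ are disjoint tori of self-intersection $0$, whereas any blow-up/negative-definite summand would force two basic classes whose difference has square $-4$. Where you diverge is in the reduction to that computation. The paper first proves \emph{minimality}: it assumes an embedded $(-1)$-sphere $C$, takes it to be symplectic, blows down, and derives the same $-4$ versus $0$ contradiction; it then invokes Kotschick \cite{[K]} (hence Taubes) to pass from ``minimal symplectic with $b_2^+>1$'' to ``irreducible.'' You instead attack an arbitrary connected-sum decomposition head-on: the vanishing theorem for sums with $b_2^+\ge 1$ on both sides plus $SW\not\equiv 0$ forces a simply-connected negative definite summand, Donaldson diagonalizes it, and the reflection $E_1\mapsto -E_1$ on basic classes produces the forbidden $-4$. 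This buys you a proof that never uses Taubes or the symplectic structure for the irreducibility step (the symplectic form, which you construct via Thurston plus Gompf's fiber sum, is only needed for the ``symplectic'' half of the statement), at the cost of one imprecision you should repair: Donaldson plus Freedman only give that the definite summand is \emph{homeomorphic} to $\#^m\overline{{\Bbb C}P^2}$, so the sentence ``Thus $E(1)_{L_n}$ is an $m$-fold smooth blow-up'' is not justified. What you actually need, and what is true, is the gluing formula for Seiberg--Witten invariants of a connected sum $X_1\# N$ with $N$ simply connected and negative definite, which says the basic classes are exactly $k_1+\sum_i\pm E_i$ with $k_1$ a basic class of $X_1$ and $E_i$ the diagonal generators of $H^2(N)$; cite that statement rather than the literal blow-up formula and the argument closes.
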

\begin{proof}
We assume that $E(1)_{L_n}$ has an embedded sphere $C$ with $[C]^2=-1$.
Since the intersection form is odd, $n$ is even.
We may assume $C$ is a symplectic sphere.
Let $E'$ be the blow-downed manifold along $C$.
Then the Seiberg-Witten basic classes $\mathcal{B}_{E(1)_{L_n}}$ are of form $\{k\pm PD(C)|k\in\mathcal{B}_{E'}\}$.
The basic classes $k_{\pm}=k\pm PD(C)$ satisfy $(PD(k_+)-PD(k_-))^2=4C^2=-4$.
However, from the basic classes (\ref{basicE1n}), the self-intersection number of the difference of any two of the basic classes is zero.
This is contradiction.

Since $E(1)_{L_{n}}(n\neq 0)$ is a simply-connected, minimal symplectic manifold with $b_2^+>1$, it is irreducible due to \cite{[K]}.
Thus $Z_n$ is also an irreducible symplectic manifold.
\qed\end{proof}

We prove Theorem~\ref{goon}.

{\bf Proof of Theorem~\ref{goon}.}
We will prove $Y_n\subset E(1)_{L_n}$.
The manifold $P\subset Y_0$ is embedded in $E(1)_{L_0}$ by the definition.
See \cite{[T]} for the embedding.
The twist of $(P,\varphi^n)$ via the embedding $P\hookrightarrow E(1)_{L_0}$ gets $E(1)_{L_n}$.
Then $Y_0$ changes to $Y_n$ in $E(1)_{L_n}$.
This result is due to Theorem~\ref{crossingchange} or \cite{Tan1}.
From Lemma~\ref{minimal}, $Y_n$ and $Z_n$ are submanifolds of an irreducible symplectic 4-manifold.
For $n=1$, see {\sc Figure}~\ref{FF}.

Applying the same twist for $Z_0\subset E(1)_{L_0}$, we can obtain an embedding $Z_n\hookrightarrow E(1)_{L_n}$.
\qed

Notice that each of 2-handles $h_1$ or $h_2$ in $Y_n$ and $Z_n$ corresponds to the sections in $E(1)-\nu(T^2)$.

{\bf Proof of Theorem~\ref{exoticex}.}
From Theorem~\ref{goon}, if $n$ is positive, then $Y_{2n}$ is irreducible,
however $Y_0$ has a $(-1)$-sphere.
Thus $Y_{2n}$ is not diffeomorphic to $Y_0$.
From Proposition~\ref{XYZ}, $Y_0$ and $Y_{2n}$ are homeomorphic.
\hfill\qed

\begin{figure}[htbp]
\begin{center}
\includegraphics{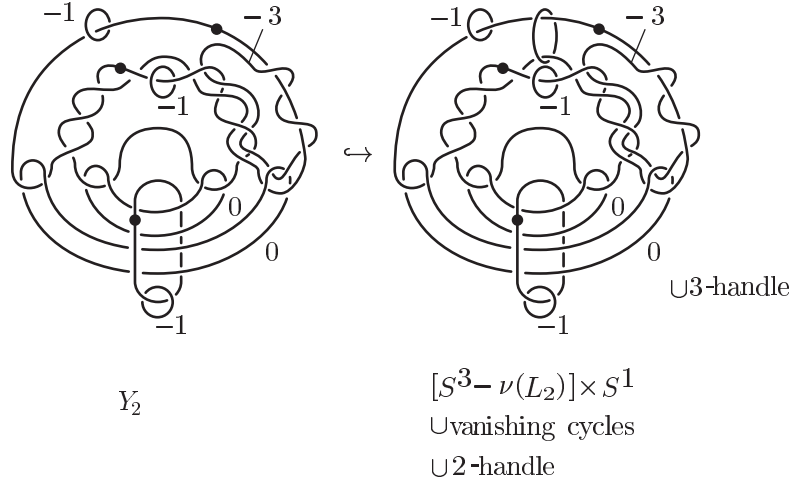}
\caption{$Y_2\hookrightarrow [S^3-\nu(L_n)]\times S^1\cup3\text{ vanishing cycles$\cup$2-handle}$.}
\label{FF}
\end{center}
\end{figure}

Next, we will prove the existence of infinitely many mutually exotic differential structures in $\{Y_n\}$.
First, we prove the following lemmas:
\begin{lem}
\label{fundamental}
Let $Q$ be a quadratic form $\langle 0\rangle\oplus\langle -1\rangle\oplus\langle 1\rangle$ on ${\Bbb Z}^3$.
Any isomorphism $({\Bbb Z}^3,Q)\to ({\Bbb Z}^3,Q)$ preserving $Q$ is presented by 
$$\begin{pmatrix}\epsilon_1&a&b\\0&\epsilon_2&0\\0&0&\epsilon_3\end{pmatrix},$$
where each $\epsilon_i$ is $\pm1$ and $a,b$ are any integers.
\end{lem}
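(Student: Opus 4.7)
The plan is to exploit the degeneracy of $Q$. Pass from $Q$ to its associated symmetric bilinear form $B(u,v)=Q(u+v)-Q(u)-Q(v)$, whose Gram matrix in the standard basis is $\mathrm{diag}(0,-2,2)$. Over $\Z$ the radical of $B$ is precisely $\Z e_1$, since the condition $-2v_2 w_2+2v_3 w_3=0$ for every $w$ forces $v_2=v_3=0$. Any $Q$-preserving automorphism $\phi$ of $\Z^3$ preserves $B$ and hence its radical, so $\phi(e_1)=\epsilon_1 e_1$ for some $\epsilon_1=\pm 1$; this pins down the first column of the matrix of $\phi$.

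For the second column, write $\phi(e_2)=(a,y,z)^T$. The constraint $Q(\phi(e_2))=Q(e_2)=-1$ becomes $z^2-y^2=-1$, equivalently $(y-z)(y+z)=1$. Over $\Z$ this factorization forces both factors to be $+1$ or both $-1$, so $z=0$ and $y=\pm 1=:\epsilon_2$, while $a$ is completely unconstrained. The identical argument applied to $\phi(e_3)=(b,y',z')^T$ with $Q(e_3)=1$ yields $y'=0$ and $z'=\pm 1=:\epsilon_3$, and $b$ is free. The remaining off-diagonal relation $B(\phi(e_2),\phi(e_3))=0$ reduces to $-2\epsilon_2\cdot 0+2\cdot 0\cdot \epsilon_3=0$ and is automatic.

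Assembling the three columns produces a matrix of the claimed upper-triangular form. Its determinant is $\epsilon_1\epsilon_2\epsilon_3=\pm 1$, so it really lies in $\mathrm{GL}(3,\Z)$; conversely any matrix of the stated shape manifestly preserves $Q$ by direct inspection. The only step that requires genuine observation is the elementary fact that the integer equation $y^2-z^2=\pm 1$ admits only solutions with one of $y, z$ equal to zero, which is immediate from the factorization above; everything else is forced by straightforward linear algebra, so there is essentially no obstacle beyond this factorization.
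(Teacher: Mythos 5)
Your proof is correct and takes essentially the same route as the paper's: both impose the Gram conditions that $\phi$ preserve the form on the standard basis and solve the resulting integer equations, with the factorization $y^2-z^2=\pm1$ forcing the second and third columns into the stated shape. Your observation that $\phi$ must preserve the radical, giving $\phi(e_1)=\pm e_1$ directly, is a slightly cleaner packaging of the paper's computation that $b_1=c_1=0$, but not a genuinely different argument.
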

\begin{proof}
Let $\phi$ be any isomorphism $\phi:({\Bbb Z}^3,Q)\to ({\Bbb Z}^3,Q)$ preserving the quadratic form $Q=\langle 0\rangle\oplus\langle -1\rangle\oplus\langle 1\rangle$.
For the standard generator $\{{\bf e}_i\}$ in ${\Bbb Z}^3$, we denote the images by
$\phi({\bf e}_i)=a_i{\bf e}_1+b_i{\bf e}_2+c_i{\bf e}_3$.
Since $\phi$ preserves $Q$, we have
$$\begin{cases}
-b_1^2+c_1^2=0,\ -b_2^2+c_2^2=-1, \\
-b_3^2+c_3^2=1,\ -b_1b_2+c_1c_2=0, \\
-b_1b_3+c_1c_3=0,\  -b_2b_3+c_2c_3=0.\end{cases}$$
Solving these equations, we have $c_2=0$, $b_3=0$, $b_2=\pm1$, and $c_3=\pm1$.
Furthermore, we have $b_1=c_1=0$.
Here we put $b_2=:\epsilon_2$,  and $c_3=:\epsilon_3$.
Since the map $\phi$ is an automorphism on ${\Bbb Z}^3$, we have $a_1=:\epsilon_1$, where $\epsilon_1=\pm1$.
Hence, defining as $a=a_2$ and $b=a_3$, we get the presentation matrix of $\phi$.
\qed\end{proof}
\begin{lem}
\label{fundamental2}
Let $Q$ be a quadratic form $\langle 0\rangle\oplus H$ on ${\Bbb Z}^3$.
Any isomorphism $({\Bbb Z}^3,Q)\to ({\Bbb Z}^3,Q)$ preserving $Q$ is presented by 
$$\begin{pmatrix}\epsilon_1&a&b\\0&\epsilon_2&0\\0&0&\epsilon_2\end{pmatrix},\text{ or }\begin{pmatrix}\epsilon_1&a&b\\0&0&\epsilon_2\\0&\epsilon_2&0\end{pmatrix}$$
where each $\epsilon_i$ is $\pm1$ and $a,b$ are any integers.
\end{lem}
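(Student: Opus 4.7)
The argument mirrors the proof of Lemma~\ref{fundamental} and exploits the fact that $Q=\langle 0\rangle\oplus H$ has a distinguished radical. The radical $R=\{v\in{\Bbb Z}^3:Q(v,\cdot)=0\}$ equals ${\Bbb Z}{\bf e}_1$, so any $Q$-preserving isomorphism $\phi$ must satisfy $\phi(R)=R$, forcing $\phi({\bf e}_1)=\epsilon_1{\bf e}_1$ with $\epsilon_1=\pm 1$. This pins down the first column of the matrix as $(\epsilon_1,0,0)^T$. Furthermore, any ${\bf e}_1$-component of $\phi({\bf e}_2)$ or $\phi({\bf e}_3)$ is invisible to $Q$ because ${\bf e}_1\in R$, which is why the top-row entries $a,b$ will turn out to be unconstrained integers.

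Next I would write $\phi({\bf e}_j)=a_j{\bf e}_1+b_j{\bf e}_2+c_j{\bf e}_3$ for $j=2,3$ and impose the remaining form identities. The self-pairings yield $b_2c_2=0$ and $b_3c_3=0$; the cross-pairing yields $b_2c_3+c_2b_3=1$; and the mixed pairings $\langle\phi({\bf e}_1),\phi({\bf e}_j)\rangle=0$ are automatic from $\phi({\bf e}_1)\in R$. The four combinations of the self-pairing conditions immediately eliminate the two subcases $\{c_2=c_3=0\}$ and $\{b_2=b_3=0\}$, since they force the cross-pairing to vanish. The remaining two subcases are $c_2=b_3=0$ with $b_2=c_3=:\epsilon_2=\pm 1$, and $b_2=c_3=0$ with $c_2=b_3=:\epsilon_2=\pm 1$; these are exactly the two matrix shapes asserted in the statement.

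Finally I would verify invertibility over ${\Bbb Z}$: the determinants of the two matrix families are $\epsilon_1\epsilon_2^2=\pm 1$ and $-\epsilon_1\epsilon_2^2=\pm 1$ respectively, so both are genuine automorphisms of ${\Bbb Z}^3$, and $a=a_2,b=a_3$ can be chosen freely as integers. There is essentially no obstacle: the substantive content is the radical argument isolating the first column and row, after which the proof reduces to a finite case split on the solutions of $b_2c_3+c_2b_3=1$ subject to $b_2c_2=b_3c_3=0$. Equivalently, one may phrase this as classifying automorphisms of $({\Bbb Z}^3/R,\bar Q)\cong({\Bbb Z}^2,H)$, for which the two solutions above correspond to the only two shapes $\mathrm{diag}(\epsilon_2,\epsilon_2)$ and its off-diagonal swap.
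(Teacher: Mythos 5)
Your proof is correct and takes essentially the same approach as the paper's: write out the Gram-matrix identities for $Q=\langle 0\rangle\oplus H$ and reduce to the finite case split on $b_2c_2=b_3c_3=0$ together with $b_2c_3+c_2b_3=1$, which yields exactly the two matrix shapes. The only cosmetic difference is that you obtain the first column from the invariance of the radical $R={\Bbb Z}{\bf e}_1$, whereas the paper derives $b_1=c_1=0$ directly from the equations and gets $a_1=\pm1$ from invertibility at the end; the content is the same.
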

\begin{proof}
Let $\phi$ be any isomorphism $({\Bbb Z}^3,Q)\to ({\Bbb Z}^3,Q)$.
For the standard generator $\{{\bf e}_i\}$ in ${\Bbb Z}^3$, we denote the images by $\phi({\bf e}_i)=a_i{\bf e}_1+b_i{\bf e}_2+c_i{\bf e}_3$.
Since $\phi$ preserves $Q$, we have
$$\begin{cases}
b_1c_1=0,\ b_2c_2=0,\ b_3c_3=0, \\
b_1c_2+c_1b_2=0,\ b_1c_3+c_1b_3=0,\ b_2c_3+c_2b_3=1.
\end{cases}$$
If $b_1\neq 0$, then $c_1=0$ holds from the first equation.
Then, from $c_2=-\frac{c_1b_2}{b_1}$ and $c_3=-\frac{c_1b_3}{b_1}$, we obtain $c_2=c_3=0$.
This is contradiction for the last equation.
Thus $b_1=0$ holds.
In the same way $c_1=0$ holds.

Since $b_2b_3c_2c_3=0$, we have $b_2c_3=0$ or $c_2b_3=0$.
If $b_2c_3=0$, then $c_2b_3=1$, hence, $c_2=b_3=\pm1$ and $b_2=c_3=0$ (because $b_2c_2=0$ and $c_3b_3=0$).
If $c_2b_3=0$, then $b_2c_3=1$, hence $c_2=b_3=\pm1$ and $b_3=c_2=0$.

Since the map $\phi$ is an isomorphism, we get $a_1=\pm1$.
Therefore, we get the presented matrix of $\phi$ as above.
\qed\end{proof}

Here we introduce the following result in \cite{OS}:
\begin{prop}[\cite{OS}]
\label{ostheorem}
Suppose that $\Sigma$ is a smooth, embedded, closed 2-dimensional submanifold in a smooth 4-manifold $X$ with $b_2^+(X)>1$
and for a basic class $K$ we have $\chi(\Sigma)-[\Sigma]^2-K([\Sigma])=2n<0$.
Let $\epsilon$ denote the sign of $K([\Sigma])$.
Then the cohomology class $K+2\epsilon PD([\Sigma])$ is also a basic class.
\end{prop}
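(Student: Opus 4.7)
The plan is to follow the strategy of Ozsváth and Szabó via a neck-stretching argument combined with a reflection principle for Seiberg-Witten invariants. First I would decompose $X = X_0 \cup_Y N$, where $N = \nu(\Sigma)$ is a tubular disk bundle over $\Sigma$ and $Y = \partial N$ is the associated circle bundle. Stretching a cylindrical neck $Y \times [-T,T]$ and applying the gluing theorem for Seiberg-Witten invariants presents $SW_X(K)$ as a pairing of a relative invariant $SW_{X_0}(\mathfrak{s}_0)$ against a relative invariant $SW_N(\mathfrak{s}_N)$, summed over spin$^c$ structures that agree along $Y$.

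The key structure is an involution on spin$^c$ structures over $N$. Since $H^2(N;\mathbb{Z}) \cong \mathbb{Z}$ is generated by $PD([\Sigma])$, such structures are classified by the Chern-number evaluation $\langle c_1, [\Sigma] \rangle$, and the transformation $K \mapsto K + 2\epsilon PD([\Sigma])$ becomes, after restriction to $N$, a reflection on this integer. Under the hypothesis $\chi(\Sigma) - [\Sigma]^2 - K([\Sigma]) = 2n < 0$ the virtual dimension of the moduli problem on $N$ for the relevant spin$^c$ structure is negative, so any nonzero relative invariant must come from obstruction/wall-crossing contributions that one can prove are symmetric under this reflection. Pairing the symmetric $SW_N$ factor against the unchanged $SW_{X_0}(\mathfrak{s}_0)$ then transfers non-vanishing of $SW_X(K)$ to $SW_X(K + 2\epsilon PD([\Sigma]))$.

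The principal obstacle is establishing the reflection symmetry itself for the relative invariants of $N$. This requires a Floer-theoretic analysis on the circle bundle $Y$ (a Seifert-fibered space), identification of a charge-conjugation-type involution that exchanges the two reflected spin$^c$ structures, and careful orientation bookkeeping so that the sign $\epsilon = \operatorname{sgn}(K([\Sigma]))$ appears correctly. Since a full moduli-space argument on cylindrical ends is delicate, I would import the relevant analytic input from Ozsváth and Szabó's framework rather than redo it from scratch; the outline above is the gluing-plus-reflection skeleton into which those technical inputs slot, and the negativity $2n<0$ of the virtual dimension is precisely what forces the reflection identity to be nontrivially applicable.
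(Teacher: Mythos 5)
The paper does not actually prove this proposition: it is quoted, with attribution, as a result of Ozsv\'ath and Szab\'o \cite{OS} (their relative adjunction relation for basic classes), so there is no in-paper argument to compare yours against. Judged on its own terms, your outline has the right global shape of the Ozsv\'ath--Szab\'o proof --- decompose $X=X_0\cup_Y N$ along the circle bundle $Y=\partial\nu(\Sigma)$, stretch the neck, and express $SW_X$ through relative invariants paired along $Y$ --- but the step that carries all of the content is left as an unproved black box, and the mechanism you propose for it is not the right one. The relation is not obtained from a ``reflection symmetry of obstruction/wall-crossing contributions forced by negative virtual dimension on $N$.'' In \cite{OS} it comes from an explicit identification of the Seiberg--Witten moduli spaces on the $\Sigma$-side (and on the cylinder $Y\times\R$) with spaces of effective divisors, i.e.\ symmetric products of $\Sigma$, together with the structure of the Floer theory of the circle bundle; the hypothesis $\chi(\Sigma)-[\Sigma]^2-K([\Sigma])=2n<0$ is used to ensure that the relevant relative invariant pairs nontrivially with the top class of the appropriate symmetric product, not to make the moduli problem obstructed.

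A concrete symptom that something is missing: the conclusion is directed, not symmetric. The shift is by $+2\epsilon PD([\Sigma])$ with $\epsilon=\operatorname{sgn}(K([\Sigma]))$, and the hypothesis $2n<0$ is imposed on $K$ but need not hold for the new class, so whatever identity you need over $N$ cannot be an honest involution of relative invariants exchanging the two spin$^c$ structures --- a genuine symmetry would yield relations in both directions independently of the sign of $K([\Sigma])$, which is not what the theorem asserts. Your sketch never explains where this asymmetry enters. Since you explicitly propose to import the analytic input from Ozsv\'ath--Szab\'o anyway, your argument in effect reduces to what the paper already does, namely cite \cite{OS}; as an independent proof it has a genuine gap precisely at the reflection/composition identity where the theorem lives.
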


\begin{lem}
Let $m$ be a positive integer.
There exists a generator $\{T_1,T_2,S_m\}$ in $H_2(P_m)$ such that $T_i\ \ (i=1,2)$ are realized by tori and the genus of the surface realizing $S_m$ is $m(m-1)$.
The presentation matrix with respect to this generator is
$$\begin{pmatrix}0&0&0\\0&0&1\\0&1&-2m^2-m-1\end{pmatrix}.$$
\end{lem}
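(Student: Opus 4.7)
The plan is to read off the generators of $H_2(P_m)$ directly from a handle diagram of $P_m$, verify the intersection matrix from the linking/framing data, and then build an explicit embedded surface of the claimed genus.

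First I would draw a handle decomposition of $P_m$ — presumably obtained from the diagram of $P$ in Figure~\ref{P} after applying the twist $\varphi^m$ to the appropriate piece. Since $P$ is simply-connected and the twist does not change this, a basis of $H_2(P_m)$ is in bijection with the 2-handles modulo relations from 3-handles. Two of the basis classes should be $T_1$ and $T_2$, each represented by a torus coming from the Kodaira III piece $V$ (or equivalently from the pair of cusp neighborhoods used to build the knot/link surgery picture): these are standard Seifert-style tori obtained by capping off the 0-framed unknot components with disks in a 0-surgery picture, and they have vanishing self-intersection and vanishing mutual intersection by inspection of the diagram. The third class $S_m$ will correspond to the remaining 2-handle whose attaching circle acquires a large self-linking after $m$ applications of $\varphi$.

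Next I would compute the intersection matrix directly. The entries involving $T_1$ vanish because $T_1$ is disjoint (up to isotopy) from the other two generators — this should follow from pushing $T_1$ off into the part of the diagram left unchanged by $\varphi^m$. For $\langle T_2, S_m\rangle = 1$ I would use the geometric linking of the attaching circle of $S_m$ with the meridian disk of the 0-framed handle defining $T_2$. For $\langle S_m, S_m\rangle = -2m^2 - m -1$, I expect the framing of $S_m$ to be read off from the writhe after inserting $m$ full twists into the band associated to $\varphi$: each twist contributes a linear shift to the framing, and the quadratic term $-2m^2$ comes from the interaction of strands already modified by previous twists, giving the characteristic $O(m^2)$ growth.

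To realize $S_m$ by a surface of genus $m(m-1)$ I would start from the core disk of the corresponding 2-handle capped off by a Seifert surface for its attaching knot in $\partial(\text{1-handlebody})$. Each application of $\varphi$ adds a controlled number of crossings to this attaching knot; bounding the Seifert genus of the resulting knot by $m(m-1)$ is the delicate step. A natural recipe is to add one tube per pair of crossings introduced by the twist, and since $\varphi^m$ inserts on the order of $2m(m-1)$ crossings that pair up into $m(m-1)$ bands, this gives a surface with $g = m(m-1)$.

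The main obstacle is the last step: getting the exact genus $m(m-1)$, not just an $O(m^2)$ upper bound. An upper bound is cheap — it comes from naively tubing off every double point — but hitting the precise count requires identifying which pairs of crossings can be resolved simultaneously without adding extra topology, and arguing that no further simplification is available for the homology class at hand. Verifying the intersection matrix is then routine bookkeeping of framings and linking numbers from the handle diagram, and checking that $\{T_1,T_2,S_m\}$ is actually a generating set reduces to showing that the presentation matrix has the same rank and elementary divisors as the one obtained from the handle decomposition.
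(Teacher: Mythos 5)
Your outline of the generators is broadly in the spirit of the paper's proof (the paper takes $T_1,T_2$ to be the tori $m_i\times S^1$ over the meridians of the link $L_m$, and $S_m$ to be built from the $2$-handle $h_1$), but there is a genuine gap at exactly the point you flag as ``the delicate step,'' and you do not close it. The missing idea is a precise identification of the attaching circle after the twist: the paper shows that $\varphi^m(\alpha_1)$, the image of the attaching sphere of $h_1$, is the $(m,2m+1)$-torus knot in the boundary of the $0$-handle. Once you know this, the genus is not obtained by tubing off crossings at all: you cap the core disk of $h_1$ with a pushed-in Seifert (slice) surface of the torus knot inside $P$, and the genus of that surface is exactly $\tfrac{(m-1)\cdot 2m}{2}=m(m-1)$ by the standard genus formula for torus knots. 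Your scheme of ``one tube per pair of crossings'' only produces \emph{some} surface of genus $O(m^2)$, and your own remark that you would then need to argue ``no further simplification is available'' is a red herring --- the lemma asserts the existence of a representative of genus $m(m-1)$, not minimality, so what you actually need is an exact construction, which the torus-knot identification supplies and a crossing count does not.

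The same identification is what makes the intersection data computable rather than heuristic. Your claim that the framing picks up ``a linear shift per twist'' plus a quadratic interaction term is the right shape but is not a computation; the paper reads off $[S_m]^2=-2m^2-m-1$ by handle calculus after cancelling the other components against the explicit $(m,2m+1)$-torus-knot attaching curve, and gets $T_2\cdot S_m=1$ because the attaching sphere of the handle giving $T_2$ is homologically a meridian of $S_m$. So the fix to your argument is concrete: before doing any counting, prove that $\varphi^m(\alpha_1)$ is the $(m,2m+1)$-torus knot (this is where the actual work of the lemma lives), and then both the genus and the matrix entries follow from standard torus-knot facts and routine handle slides.
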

\begin{proof}
\begin{figure}[htpb]
\begin{center}
\includegraphics{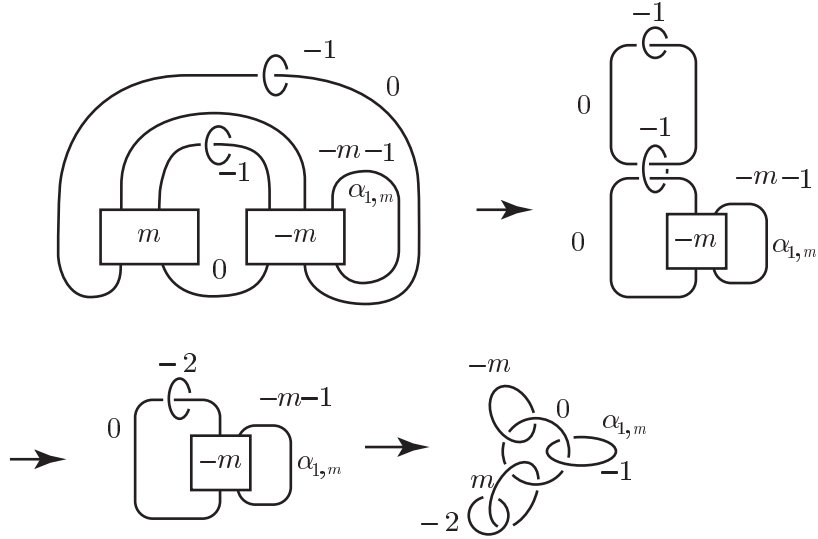}
\caption{The torus knot for $S$.}
\label{torusknotS}
\end{center}
\end{figure}
Recall that $Y_0=P\cup h_1$ and $Y_m=Y_0(P,\varphi^m)$.
Here $h_1$ is the 2-handle in {\sc Figure}~\ref{PH}.
We denote by $\alpha_1$ the attaching sphere of $h_1$
and by $\alpha_{1,m}$ the image $\varphi^m(\alpha_1)$.
The attaching sphere $\alpha_{1,m}$ is the $(m,2m+1)$-torus knot on the boundary of the $0$-handle (see the fourth pictures in {\sc Figure}~\ref{torusknotS}).

Let $m_1,m_2$ be the meridians for the link $L_m$.
Let $T_i$ be the embedded torus $T_{m_i}=m_i\times S^1$ in $Y_m$ corresponding to $m_i$.
$T_i$ can be seen in Figure~\ref{torus}.
\begin{figure}\begin{center}\includegraphics{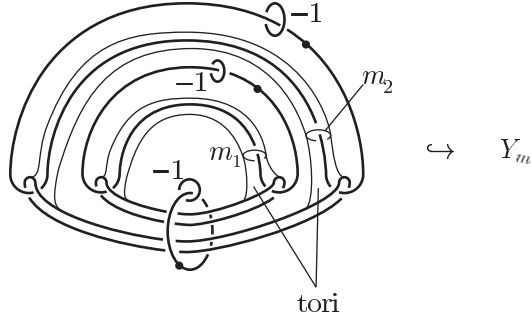}\caption{These tori are embedded in $Y_m$ as $T_1,T_2$.}\label{torus}\end{center}\end{figure}

Let $S_m$ be an embedded surface made from the union of a slice surface in $P$ of $\alpha_{1,m}$ and the core disk of $h_{1}$.
Hence, the pair $\{T_1,T_2,S_m\}$ is embedded surfaces generating $H_2(Y_m)$, because $Y_m$ consists of $\alpha_{1,m}$ and 0-framed 2-handles by canceling two 1-/2-handle canceling pairs.
The latter 0-framed 2-handles correspond to the handle decomposition of $P$.

The genus is $g(S_m)=\frac{(m-1)2m}{2}=m(m-1)$ since $\alpha_{1,m}$ is the $(m,2m+1)$-torus knot.
The self-intersection number of $S_m$ is $-2m^2-m-1$ by canceling other components by handle calculus.
The intersection of $T_2$ and $S_m$ can be understood from what attaching sphere of $T_2$ is a meridian of $S_m$ homologically in the same way as {\sc Figure}~13 in \cite{Tan1}.

Thus, the presentation matrix for the generators $\{T_1,T_2,S_m\}$ becomes the claimed one.
\end{proof}
{\bf Proof of Theorem~\ref{exoticinfinite}.}
Suppose that there exists a diffeomorphism $\delta:Y_{m}\cong Y_{n}$ for some $m,n$ with $0\le m<n$ and $n\equiv m(\bmod 2)$.
We denote by $\{T'_1,T'_2,S_n\}$ such a pair corresponding to $Y_n$.
We get a smooth inclusion:
$$S_m\subset Y_m\overset{\delta}{\to}Y_n{\hookrightarrow}E(1)_{L_n}.$$
We denote $\delta(S_m)$ simply by $S_m$ in $E(1)_{L_n}$.

Suppose that $m$ is even.
The isomorphism $f_\delta:({\Bbb Z}^3,Q_{Y_m})\to ({\Bbb Z}^3,Q_{Y_{n}})$ can be decomposed as follows:
$$({\Bbb Z}^3,Q_{Y_{m}})\to({\Bbb Z}^3,  \begin{pmatrix}0&0&0\\0&-1&0\\0&0&1\end{pmatrix})\to({\Bbb Z}^3,\begin{pmatrix}0&0&0\\0&-1&0\\0&0&1\end{pmatrix})\to({\Bbb Z}^3, Q_{Y_{n}}).$$
Using Lemma~\ref{fundamental}, we obtain the following presentation for $f_\delta$:
$$\begin{pmatrix}1&0&0\\0&-n^2-\frac{n}{2}&n^2+\frac{n}{2}+1\\0&-1&1\end{pmatrix}\begin{pmatrix}\epsilon_1&a&b\\0&\epsilon_2&0\\0&0&\epsilon_3\end{pmatrix}\begin{pmatrix}1&0&0\\0&1&-m^2-\frac{m}{2}-1\\0&1&-m^2-\frac{m}{2}\end{pmatrix}.$$
Hence, the class of $S_m$ in $Y_n$ via $\delta$ is presented as follows:
\begin{eqnarray}
[S_m]&=&\left(-(a+b)\left(m^2+\frac{m}2\right)-a\right)[T'_1]\nonumber\\
&&+\left\{(\epsilon_2-\epsilon_3)\left(m^2+\frac{m}2\right)\left(n^2+\frac{n}{2}\right)+\left(n^2+\frac{n}{2}\right)\epsilon_2\right.\nonumber\\
&&\left.-\left(m^2+\frac{m}{2}\right)\epsilon_3\right\}[T_2']+\left((\epsilon_2-\epsilon_3)\left(m^2+\frac{m}{2}\right)+\epsilon_2\right)[S_n].\label{SMcoeff}
\end{eqnarray}
Thus, we have the following intersection number
\begin{eqnarray*}
[S_m]\cdot([T_1']+[T_2'])&=&(\epsilon_2-\epsilon_3)\left(m^2+\frac{m}{2}\right)+\epsilon_2.
\end{eqnarray*}
Here putting $k=PD(\epsilon_2(n-1)([T_1']+[T_2']))$ and $\eta=\frac{1-\epsilon_2\epsilon_3}{2}$,
we have $k([S_m])=(n-1)((2m^2+m)\eta+1)> 0$.

Here we have
\begin{eqnarray*}
\chi(S_m)-[S_m]^2-k([S_m])&=&2-2m(m-1)+(2m^2+m+1)\\
&&-(n-1)((2m^2+m)\eta+1)\\
&=&3m+3-(n-1)((2m^2+m)\eta+1)\\
&=&3m-n+4-(n-1)(2m^2+m)\eta\\
&\le&3m-n+4.
\end{eqnarray*}
If $n$ satisfies $3m+4<n$, then $\chi(S_m)-[S_m]^2-k([S_m])=2\ell<0$ holds.
Using Proposition~\ref{ostheorem}, we have a basic class $k+2PD([S_m])$.

Here $S_n$ represents a section in $E(1)_{L_n}$ thus $[S_n]$ is a non-vanishing class in $H_2(E(1)_{L_n})$.
From the basic classes (\ref{basicE1n}) of $E(1)_{L_n}$, the coefficient of $[S_n]$ in $[S_m]$ must be $0$.
The coefficient of $[S_n]$ is an odd number.
See the coefficient in (\ref{SMcoeff}).
Thus, this has some contradiction.
Therefore, if $3m+4<n$ is satisfied, then $Y_n$ is not diffeomorphic to $Y_{m}$.

Suppose that $m$ is odd.
Any isomorphism $({\Bbb Z}^3,Q_{Y_m})\to ({\Bbb Z}^3,Q_{Y_{n}})$ can be decomposed as follows:
$$Q_{Y_{m}}\to  \begin{pmatrix}0&0&0\\0&0&1\\0&1&0\end{pmatrix}\to\begin{pmatrix}0&0&0\\0&0&1\\0&1&0\end{pmatrix}\to Q_{Y_{n}}.$$
Using Lemma~\ref{fundamental2}, we obtain the following presentation for $\varphi$:
$$\begin{pmatrix}1&0&0\\0&1&n^2+\frac{n+1}{2}\\0&0&1\end{pmatrix}\begin{pmatrix}\epsilon_1&a&b\\0&\epsilon_2&0\\0&0&\epsilon_2\end{pmatrix}\begin{pmatrix}1&0&0\\0&1&-m^2-\frac{m+1}{2}\\0&0&1\end{pmatrix}$$
or
$$\begin{pmatrix}1&0&0\\0&1&n^2+\frac{n+1}{2}\\0&0&1\end{pmatrix}\begin{pmatrix}\epsilon_1&a&b\\0&0&\epsilon_2\\0&\epsilon_2&0\end{pmatrix}\begin{pmatrix}1&0&0\\0&1&-m^2-\frac{m+1}{2}\\0&0&1\end{pmatrix}.$$
In fact, any automorphism preserving $\langle0\rangle\oplus H$ is the solution of
\begin{eqnarray*}
[S_m]&=&\left(b-a\left(m^2+\frac{m+1}2\right)\right)[T'_1]-\epsilon_2(m-n)\left(m+n+\frac12\right)[T_2']+\epsilon_2[S_n]
\end{eqnarray*}
or
\begin{eqnarray*}
[S_m]&=&\left(b-a\left(m^2+\frac{m+1}{2}\right)\right)[T'_1]+\epsilon_2\left(1-\left(m^2+\frac{m+1}{2}\right)\left(n^2+\frac{n+1}{2}\right)\right)[T_2']\\
&&-\epsilon_2\left(m^2+\frac{m+1}{2}\right)[S_n].
\end{eqnarray*}
Thus, we have
\begin{eqnarray*}
[S_m]\cdot([T_1']+[T_2'])=\epsilon_2\text{ or }-\epsilon_2\left(m^2+\frac{m+1}{2}\right).
\end{eqnarray*}
Here putting $k=PD(\epsilon_2(n-1)([T_1']+[T_2']))$ or $PD(-\epsilon_2(n-1)([T_1']+[T_2']))$,
we have $k([S_m])=n-1>0$ or $(n-1)\left(m^2+\frac{m+1}{2}\right)>0$ respectively.
Thus, we have
\begin{eqnarray*}
\chi(S_m)-[S_m]^2-k([S_m])&=&2-2m(m-1)+2m^2+m+1-\begin{cases}n-1\\(n-1)(m^2+\frac{m+1}{2})\end{cases}\\
&=&\begin{cases}3m+4-n\\3m+3-(n-1)(m^2+\frac{m+1}{2})\end{cases}\le 3m+4-n.
\end{eqnarray*}
If $n$ satisfies $3m+4<n$, then $\chi(S_m)-[S_m]^2-k([S_m])=2\ell<0$ holds.
Using Proposition~\ref{ostheorem}, $k+2PD([S_m])$ is also a basic class.
In the same reason as the case where $m$ is even, the coefficient of $[S_n]$ in $[S_m]$ must be $0$, namely,
we have
$$2m^2+m+1=0.$$
Since this equation does not have any integer solution, $Y_m$ is non-diffeomorphic to $Y_{n}$.

In both parities of $m$ and $n$, we can get an infinite subsequence $\{m_i\}$ in ${\Bbb N}$
such that $Y_{m_i}$ are mutually non-diffeomorphic to each other.
\qed\\
\subsection{4-manifolds $\{Z_{2n}\}$ and $\{Z_{2n+1}\}$ obtained by a g-cork $(P,\varphi^{2n})$.}
In this section we show infinitely many non-diffeomorphic exotic enlargements $Z_n$ of $P$.
$$Z_0=P\cup h_1\cup h_2=\tilde{Z}_0\#^2\overline{{\Bbb C}P^2},$$
\begin{figure}[htpb]
\begin{center}
\includegraphics{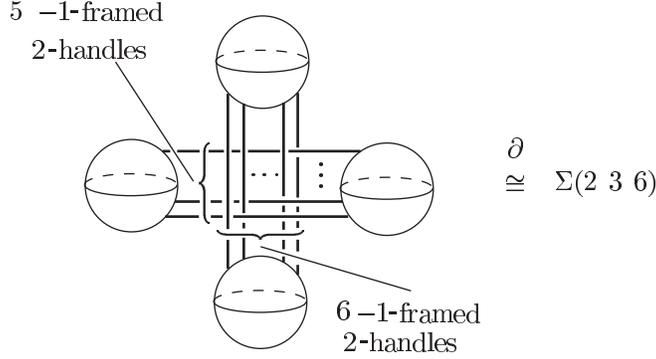}
\caption{Milnor fiber attached one 2-handle ($\tilde{M}_c(2,3,5)$). The boundary is $\Sigma(2,3,6)$.}
\label{milnor}
\end{center}
\end{figure}
{\bf Proof of Theorem~\ref{Z}.}
Let $E_{D,i}\to D^2\ (i=1,2)$ be two copies of the fibration of the complement $E(1)-\nu(T^2)$ of the neighborhood of a fiber $T^2$.
The definition of the link-surgery gives $E(1)_{L_n}=([S^3-\nu(L_n)]\times S^1)\cup_{\omega_1}E_{D,1}\cup_{\omega_2}E_{D,2}$ (see the first picture in {\sc Figure}~\ref{outline}).
Each gluing map $\omega_i$ is a map from $\partial E_{D_i}$ to one component of $\partial \nu(L_n)\times S^1$.

Here, $Dv_1$, and $Ds_1$ in $E_{D,1}$ are the neighborhoods of the compressing disk for the vanishing cycle and a section of $E_{D,1}\to D^2$.
$Dv_2, Dv_3$, and $Ds_2$ in the other component $E_{D,2}$ are the neighborhoods of the compressing disks for the vanishing cycles and a section of $E_{D,2}\to D^2$.
We use the same notation $Dv_i$, and $Ds_j$ as the parts put on $[S^3-\nu(L_n)]\times S^1$ via gluing maps $\omega_1$ and $\omega_2$ (see the second picture in {\sc Figure}~\ref{outline}).
Since the following holds:
$$E_{D,1}-Dv_1-Ds_1=M_c(2,3,6)$$
and
$$E_{D,2}-Dv_2-Dv_3-Ds_{2}=M_c(2,3,5),$$
we get
$$E(1)_{L_n}=(([S^3-\nu(L_n)]\times S^1)\cup_{i=1}^3Dv_i\cup_{i=1}^2Ds_i)\cup M_c(2,3,6)\cup M_c(2,3,5).$$
Here the Milnor fiber is defined to be the set
$$M_c(p,q,r)=\{(z_1,z_2,z_3)\in {\Bbb C}^3|z_1^p+z_2^q+z_3^r=\epsilon\text{ and }|z_1|^2+|z_2|^2+|z_3|^2\le 1\},$$
for a non-zero complex number $\epsilon$.
The handle decomposition is seen in \cite{KK}.
\begin{figure}[htbp]\begin{center}\includegraphics{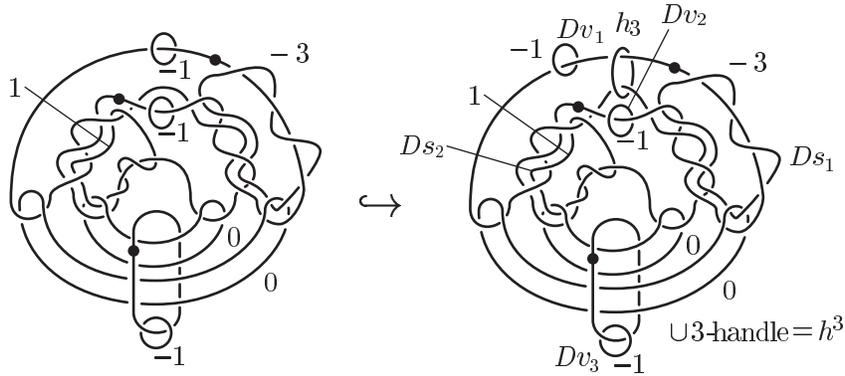}\caption{$Z_n\hookrightarrow ([S^3-\nu(L_n)]\times S^1)\cup_{i=1}^3Dv_i\cup_{i=1}^2Ds_i$}\label{ZembeddingS3Ln}\end{center}\end{figure}
\begin{figure}[htbp]\begin{center}\includegraphics{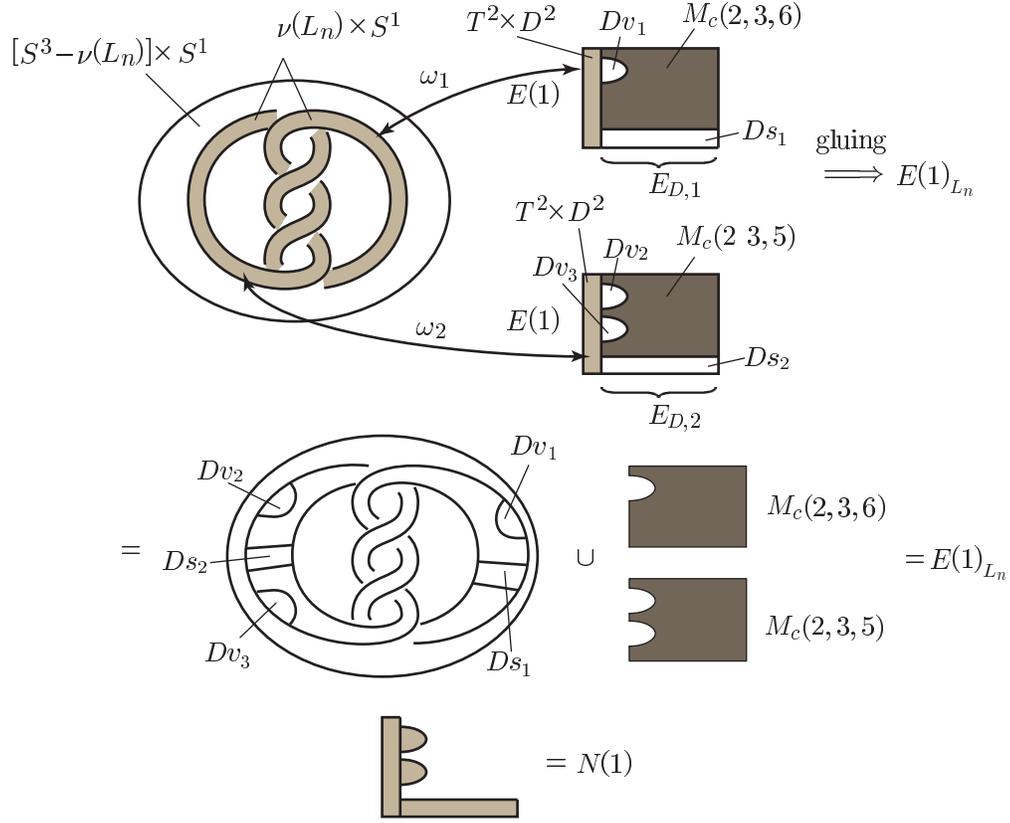}\caption{$E(1)_{L_n}=([S^3-\nu(L_n)]\times S^1)\cup E_D\cup E_D=([S^3-\nu(L_n)]\times S^1)\cup_{i=1}^3Dv_i\cup_{i=1}^2Ds_i\cup M_c(2,3,6)\cup M_c(2,3,5)$ and $N(1)$.}\label{outline}\end{center}\end{figure}
{\sc Figure}~\ref{ZembeddingS3Ln} gives $Z_n\cup h_3\cup h^3=([S^3-\nu(L_n)]\times S^1)\cup_{i=1}^3Dv_i\cup_{i=1}^2Ds_i$.
The link-surgery is constructed as follows:
$$E(1)_{L_n}=Z_n\cup h_3\cup h^3\cup M_c(2,3,6)\cup M_c(2,3,5).$$
The handles $h_3$ and $h^3$ are the 2- and 3-handle indicated in {\sc Figure}~\ref{ZembeddingS3Ln}.

Let $R$ denote the union $h_3\cup h^3$.
The attaching region of $R$ is a thickened torus $T^2\times D^1$ on $\partial Z_n$.
The boundary $\partial (Z_n\cup R)$ is the disjoint union of $\Sigma(2,3,5)$ and $\Sigma(2,3,6)$.
The isotopy class of the essential torus in $\partial Z_n$ is uniquely determined from JSJ-theory.
Thus the self-diffeomorphism on $Z_n$ can extend to $Z_n\cup R$ uniquely.

Next we attach the Milnor fibers on the boundaries $\Sigma(2,3,5)$ and $\Sigma(2,3,6)$.
Here we claim the following lemma:
\begin{lem}[\cite{[GS]},\cite{[T]}]
Any diffeomorphism on $\Sigma(2,3,5)$ or $\Sigma(2,3,6)$ extends to
$M_c(2,3,5)$ or $M_c(2,3,6)$ respectively.
\end{lem}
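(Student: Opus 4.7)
The plan is to reduce the extension problem to (i) identifying a small set of generators for the mapping class group of each boundary 3-manifold, and (ii) realizing each such generator as the restriction of a global symmetry of the corresponding Milnor fiber. Both $\Sigma(2,3,5)$ and $\Sigma(2,3,6)$ are Seifert fibered 3-manifolds admitting a unique Seifert fibration, so by classical results of Waldhausen and Scharlemann any self-diffeomorphism is isotopic to one preserving the fibration, reducing the question to fiber-preserving maps over the base orbifold.

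The first step is to compute $\pi_0(\mathrm{Diff}(\Sigma(2,3,k)))$ explicitly for $k=5,6$. Since the multiplicities $2,3,k$ are pairwise distinct, any automorphism of the base orbifold $S^2(2,3,k)$ must fix each cone point, so the mapping class group is generated by at most a reflection of the base together with a sign choice on regular fibers. A direct case analysis then shows the group is cyclic of small order in each case, generated up to isotopy by the boundary restriction of a natural involution of the ambient Brieskorn singularity.

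The second step is to realize that generator by a symmetry of the Milnor fiber. Writing
$$M_c(2,3,k)=\{(z_1,z_2,z_3)\in\mathbb{C}^3 \mid z_1^2+z_2^3+z_3^k=\epsilon,\ |z_1|^2+|z_2|^2+|z_3|^2\le 1\}$$
with $\epsilon\in\mathbb{R}_{>0}$, complex conjugation $(z_1,z_2,z_3)\mapsto(\bar z_1,\bar z_2,\bar z_3)$ is a well-defined self-diffeomorphism of $M_c(2,3,k)$ whose restriction to the link $\Sigma(2,3,k)$ realizes the non-trivial generating mapping class identified above. Composition with isotopies then yields an extension of every boundary diffeomorphism.

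The main obstacle will be the mapping class group computation itself, particularly for $\Sigma(2,3,6)$, which is not a homology sphere: here one must check that no additional fiber-preserving automorphisms arise from the $H_1$-twist freedom in the Seifert bundle beyond the obvious involution, or else verify that such extra candidates likewise extend. A secondary subtlety is that extension of boundary diffeomorphisms is only required up to smooth isotopy, and one should ensure that the chosen extension is genuinely smooth; since complex conjugation is manifestly smooth this is immediate, but it is worth flagging if any alternative extension is later constructed by handle-level moves.
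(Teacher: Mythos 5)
Your proposal follows essentially the same route as the paper: identify the (small) diffeotopy group of each boundary and realize its generator by a visible global symmetry of the Milnor fiber --- the paper cites Gompf's Lemma 3.7 for $\Sigma(2,3,5)$ and Boileau--Otal for $\pi_0\mathrm{Diff}(\Sigma(2,3,6))\cong\mathbb{Z}/2\mathbb{Z}$, realizing the nontrivial class as the $180^{\circ}$ rotation of the handle diagram, which is exactly your complex conjugation. One caveat: these Brieskorn manifolds are small (non-Haken) Seifert fibered spaces, so Waldhausen's theorem does not supply the reduction to fibration-preserving diffeomorphisms; that reduction (and the resulting computation of the diffeotopy group) is precisely the content of the Boileau--Otal result the paper invokes, so you should cite that rather than Waldhausen--Scharlemann.
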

\proof
The proof is the same as Lemma 3.7 in \cite{[G]}.
We remark the case of $\Sigma(2,3,6)$ here.
By the result in \cite{[BO]} the diffeotopy type of $\Sigma(2,3,6)$ is isomorphic to ${\Bbb Z}/2{\Bbb Z}$.
The non-trivial diffeomorphism on $\Sigma(2,3,6)$ is the restriction of rotating by the $180^\circ$ about the horizontal line in {\sc Figure}~\ref{milnor}.
Thus the diffeomorphism extends to $M_c(2,3,6)$.
\qed

Thus, any self-diffeomorphism on $\Sigma(2,3,5)$ and $\Sigma(2,3,6)$ can extend to $M_c(2,3,5)$ or $M_c(2,3,6)$.

The diffeomorphism on $Z_n$ can extend to $Z_{n}\cup R\cup M_c(2,3,5)\cup M_c(2,3,6)=E(1)_{L_n}.$
This means that the diffeomorphism type of $E(1)_{L_n}$ is determined by that of $Z_n$.
Conversely, if $m\neq n$, then $Z_n$ and $Z_m$ are non-diffeomorphic.
\qed\\
Hence, we have the following corollary.
\begin{cor}
Any diffeomorphism $Z_n\to Z_m$ extends to a diffeomorphism $E(1)_{L_n}\to E(1)_{L_m}$.
\end{cor}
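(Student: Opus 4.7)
The plan is to repeat, for an arbitrary diffeomorphism $f\colon Z_n \to Z_m$, the extension argument that was carried out for self-diffeomorphisms in the proof of Theorem~\ref{Z}. The target is the decomposition
\[
E(1)_{L_k} \;=\; Z_k \,\cup\, R \,\cup\, M_c(2,3,6) \,\cup\, M_c(2,3,5), \qquad R = h_3\cup h^3,
\]
which is valid for every $k$, and the job is to push $f$ across $R$ and then across the two Milnor fibers.

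First I would look at the restriction $\partial f\colon \partial Z_n \to \partial Z_m$. Since $\partial Z_k$ is $1$-surgery on the granny knot, independent of $k$, both boundaries carry the same essential torus $T$ coming from the JSJ decomposition. By uniqueness of the JSJ torus up to isotopy, $\partial f$ may be isotoped so as to send the attaching torus of $R$ in $\partial Z_n$ to the corresponding torus in $\partial Z_m$. The handle pair $R=h_3\cup h^3$ fills a tubular neighborhood of $T$ and caps off in the unique way compatible with $\partial(Z_k\cup R) = \Sigma(2,3,5)\sqcup\Sigma(2,3,6)$, so the attaching data of $R$ is determined by $T$. Hence $f$ extends to $\tilde f\colon Z_n\cup R \to Z_m\cup R$.

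Second, $\tilde f$ restricts on the new boundary to a diffeomorphism $\Sigma(2,3,5)\sqcup\Sigma(2,3,6) \to \Sigma(2,3,5)\sqcup\Sigma(2,3,6)$. Since $\Sigma(2,3,5)$ and $\Sigma(2,3,6)$ are not diffeomorphic (their Casson invariants distinguish them), $\tilde f$ must send each Brieskorn component to its namesake. Now apply the lemma already proved in the excerpt: any self-diffeomorphism of $\Sigma(2,3,5)$ or $\Sigma(2,3,6)$ extends to the corresponding Milnor fiber $M_c(2,3,5)$ or $M_c(2,3,6)$. Gluing these extensions together produces the desired $\hat f\colon E(1)_{L_n} \to E(1)_{L_m}$.

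The delicate step is the first one: one must verify that, up to isotopy, the framing of the $2$-handle $h_3$ and the attachment of $h^3$ are determined by the essential torus $T$ alone, so that $\partial f$ truly identifies the two copies of $R$. This is where JSJ-uniqueness is doing real work, together with the observation that the Seifert-fibered pieces on the two sides of $T$ are distinguishable (one side yields $\Sigma(2,3,5)$, the other $\Sigma(2,3,6)$), which rigidifies the identification. Once that matching is in place, Steps~2 and~3 are essentially bookkeeping.
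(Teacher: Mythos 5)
Your proposal is correct and follows essentially the same route as the paper: the corollary is stated there as an immediate consequence of the decomposition $E(1)_{L_k}=Z_k\cup R\cup M_c(2,3,6)\cup M_c(2,3,5)$ and the two extension steps (JSJ-uniqueness of the essential torus in $\partial Z_k$ to extend across $R$, then the lemma extending any diffeomorphism of $\Sigma(2,3,5)$ or $\Sigma(2,3,6)$ over the corresponding Milnor fiber), exactly as you argue. The only cosmetic remark is that $\Sigma(2,3,5)$ and $\Sigma(2,3,6)$ are distinguished already by $H_1$ (the latter is not a homology sphere), so no Casson-invariant argument is needed.
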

Hence, in this case $Z$ has the same role as Gompf's nuclei $N$ in \cite{[G]}.
\section{Some variations of plug twists.}
\label{rationalvariation}
In this section, combining the plug twist $(P,\varphi)$ and other twists, we show the 2-bridge knot-surgery and 2-bridge link-surgery (Theorem~\ref{2bridgetheorem})
are produced by the same $P$.
\subsection{The 2-bridge knot-surgery}
For an irreducible fraction $p/q$, take the continued fraction 
$$p/q=a_1-\frac{1}{a_2-\frac{1}{a_3-\cdots-\frac{1}{a_n}}}=[a_1,a_2,a_3,\cdots,a_n].$$

The continued fraction determines the 2-bridge knot or link diagram as {\sc Figure}~\ref{2-bridge},
where \fbox{$k$} in the figure stands for the $k$-half twist.
The isotopy type of $K_{p,q}$ depends only on the relatively prime integers $(p,q)$ and does not depend on the way of the continued fraction.
\begin{figure}[htpb]
\begin{center}
\includegraphics{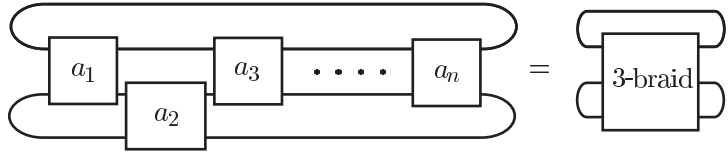}
\caption{An example of the 2-bridge knot or link $K_{p,q}$.}
\label{2-bridge}
\end{center}
\end{figure}

The following deformations of coefficients do not change the isotopy class of $K_{p,q}$ and the rational number $p/q$:
\begin{equation}
(a_1,\cdots,a_i,a_{i+1},\cdots,a_n)\leftrightarrow (a_1,\cdots,a_i\pm1,\pm1,a_{i+1}\pm1,\cdots,a_n)\label{defom1}
\end{equation}
\begin{equation}
(a_1,\cdots,a_n)\leftrightarrow (\pm1,a_1\pm1,\cdots,a_n), (a_1,\cdots,a_n\pm1,\pm1)\label{defom}
\end{equation}
By using this deformation, for any irreducible fraction $p/q$ we get the continued fraction
$$p/q=[b_1,b_2,\cdots,b_N]$$
such that $N$ is an odd number and $b_3,b_5,\cdots,b_{N}$ are all even.
If $b_1$ is odd or even, then $K_{p,q}$ is a knot or 2-component link respectively.
We define the 3-braid indicating as in the right of {\sc Figure}~\ref{2-bridge} with respect to $(b_1,b_2,\cdots,b_N)$ to be $B_{p,q}$.

Let $p$ be an even integer.
Then we take a continued fraction $p/q=[b_1,\cdots,b_N]$ as above.
\begin{figure}[htbp]
\begin{center}
\includegraphics{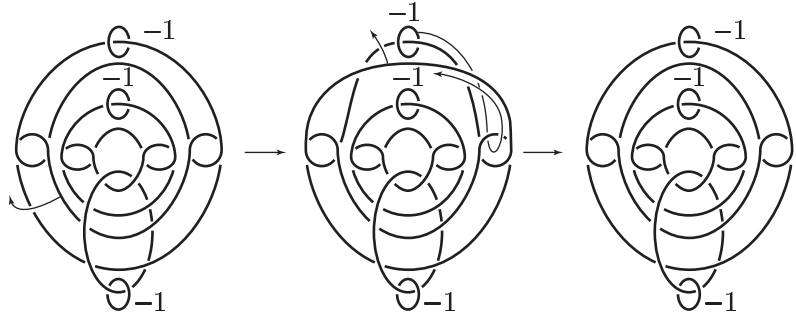}
\caption{The definition of $\psi$.}
\label{twist}
\end{center}
\end{figure}
Namely, $N$ is an odd number and $b_1,b_3,\cdots,b_N$ are even integers.
We denote the map $\psi:\partial P\to\partial P$ as in {\sc Figure}~\ref{twist}.
We define $\varphi_{p,q}:\partial P\to \partial P$ as follows:
\begin{equation}
\label{varphipq}
\varphi_{p,q}:=\varphi^{\frac{b_N}{2}}\circ\psi^{b_{N-1}}\circ\cdots\circ \varphi^{\frac{b_3}{2}}\circ\psi^{b_2}\circ \varphi^{\frac{b_1}{2}}.
\end{equation}
This definition may depend on the way of continued fraction of $p/q$.
We choose such a continued fraction for the fraction $p/q$.
Here we prove Proposition~\ref{pluggcork}.

\begin{proof}
First, we show that $\varphi_{p,q}$ is not a torsion element.
Let $B_3$ be the 3-braid group with the following presentation:
$$B_3=\langle \sigma_1,\sigma_2|\sigma_1\sigma_2\sigma_1=\sigma_2\sigma_1\sigma_2\rangle,$$
and let $B_3^0$ be a subgroup generated by $\sigma_1$ and $\sigma_2^2$.
The generators $\sigma_1$ and $\sigma_2$ are as in {\sc Figure}~\ref{gen}.
\begin{figure}[thpb]\begin{center}
{\unitlength 0.1in%
\begin{picture}( 30.0000,  9.3500)( 14.0200,-15.3500)%
%
\special{pn 8}%
\special{pa 3600 600}%
\special{pa 3600 1200}%
\special{fp}%
%
\special{pn 8}%
\special{pa 2200 600}%
\special{pa 2200 1200}%
\special{fp}%
\put(40.0000,-16.0000){\makebox(0,0){$\sigma_2$}}%
\put(18.0000,-16.0000){\makebox(0,0){$\sigma_1$}}%
%
\special{pn 8}%
\special{pa 1800 602}%
\special{pa 1798 638}%
\special{pa 1796 673}%
\special{pa 1792 707}%
\special{pa 1786 739}%
\special{pa 1778 770}%
\special{pa 1766 798}%
\special{pa 1750 822}%
\special{pa 1730 844}%
\special{pa 1705 862}%
\special{pa 1678 877}%
\special{pa 1648 891}%
\special{pa 1617 904}%
\special{pa 1587 918}%
\special{pa 1557 933}%
\special{pa 1529 949}%
\special{pa 1504 969}%
\special{pa 1484 991}%
\special{pa 1466 1016}%
\special{pa 1451 1044}%
\special{pa 1438 1074}%
\special{pa 1427 1105}%
\special{pa 1417 1137}%
\special{pa 1408 1171}%
\special{pa 1402 1196}%
\special{fp}%
%
\special{pn 8}%
\special{pa 1402 606}%
\special{pa 1408 639}%
\special{pa 1415 671}%
\special{pa 1424 703}%
\special{pa 1434 733}%
\special{pa 1447 762}%
\special{pa 1463 788}%
\special{pa 1483 812}%
\special{pa 1506 834}%
\special{pa 1531 854}%
\special{pa 1558 874}%
\special{pa 1566 880}%
\special{fp}%
%
\special{pn 8}%
\special{pa 1646 936}%
\special{pa 1694 980}%
\special{pa 1716 1003}%
\special{pa 1736 1028}%
\special{pa 1753 1054}%
\special{pa 1767 1082}%
\special{pa 1778 1112}%
\special{pa 1798 1174}%
\special{pa 1804 1198}%
\special{fp}%
%
\special{pn 8}%
\special{pa 4398 600}%
\special{pa 4396 636}%
\special{pa 4394 671}%
\special{pa 4390 705}%
\special{pa 4384 737}%
\special{pa 4376 768}%
\special{pa 4364 796}%
\special{pa 4348 820}%
\special{pa 4328 842}%
\special{pa 4303 860}%
\special{pa 4276 875}%
\special{pa 4246 889}%
\special{pa 4215 902}%
\special{pa 4185 916}%
\special{pa 4155 931}%
\special{pa 4127 947}%
\special{pa 4102 967}%
\special{pa 4082 989}%
\special{pa 4064 1014}%
\special{pa 4049 1042}%
\special{pa 4036 1072}%
\special{pa 4025 1103}%
\special{pa 4015 1135}%
\special{pa 4006 1169}%
\special{pa 4000 1194}%
\special{fp}%
%
\special{pn 8}%
\special{pa 4000 604}%
\special{pa 4006 637}%
\special{pa 4013 669}%
\special{pa 4022 701}%
\special{pa 4032 731}%
\special{pa 4045 760}%
\special{pa 4061 786}%
\special{pa 4081 810}%
\special{pa 4104 832}%
\special{pa 4129 852}%
\special{pa 4156 872}%
\special{pa 4164 878}%
\special{fp}%
%
\special{pn 8}%
\special{pa 4244 934}%
\special{pa 4292 978}%
\special{pa 4314 1001}%
\special{pa 4334 1026}%
\special{pa 4351 1052}%
\special{pa 4365 1080}%
\special{pa 4376 1110}%
\special{pa 4396 1172}%
\special{pa 4402 1196}%
\special{fp}%
\end{picture}}
This group is a normal subgroup in $B_3$ and gives a homomorphism $\pi:B_3^0\to MCG(\partial P)$ defined to be a map satisfying $\pi(\sigma_1)=\psi$ and $\pi(\sigma_2^2)=\varphi$.
Hence, $\varphi_{p,q}$ lies in $\pi(B_3^0)$.
Here $MCG(\partial P)$ is the mapping class group of $\partial P$.

\begin{clm}
$B_3^0\cong F_2\rtimes {\Bbb Z}$,
where $F_2$ is the rank $2$ free group.
\end{clm}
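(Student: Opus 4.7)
The plan is to identify $B_3^0$ geometrically as a mixed braid subgroup of $B_3$ and then apply the Birman exact sequence. First I would show that $B_3^0$ coincides with the preimage $\pi^{-1}(\langle(12)\rangle)$ under the permutation homomorphism $\pi\colon B_3\to S_3$, $\sigma_i\mapsto(i,i+1)$. The inclusion $\langle\sigma_1,\sigma_2^2\rangle\subseteq\pi^{-1}(\langle(12)\rangle)$ is immediate from $\pi(\sigma_1)=(12)$, $\pi(\sigma_2^2)=e$. For the reverse inclusion, write $\pi^{-1}(\langle(12)\rangle)=P_3\cup\sigma_1 P_3$, so that it suffices to show $P_3\subset B_3^0$. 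The standard pure-braid generators $A_{12}=\sigma_1^2$ and $A_{23}=\sigma_2^2$ lie in $B_3^0$ by inspection, and a direct use of the braid relation gives $A_{13}=\sigma_2\sigma_1^2\sigma_2^{-1}=\sigma_1^{-1}\sigma_2^2\sigma_1\in B_3^0$.

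Interpreting $B_3=\mathrm{MCG}(D^2,\{p_1,p_2,p_3\})$, the subgroup $B_3^0$ is then the mapping classes fixing $p_3$ setwise (equivalently, with the third strand closing up to itself). The Birman exact sequence for forgetting the marked point $p_3$ yields
$$1\longrightarrow \pi_1(D^2\setminus\{p_1,p_2\},p_3)\longrightarrow B_3^0\longrightarrow \mathrm{MCG}(D^2,\{p_1,p_2\})\longrightarrow 1,$$
with kernel a free group $F_2$ (the fundamental group of a twice-punctured disc) and quotient $B_2\cong\mathbb{Z}$ generated by $\sigma_1$.

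This short exact sequence splits: the subgroup of mapping classes fixing $p_3$ pointwise (i.e.\ not moving strand $3$ at all) maps isomorphically to $B_2$ under the forgetful map, so the inclusion $B_2\hookrightarrow B_3^0$ sending $\sigma_1\mapsto\sigma_1$ is a section. Hence $B_3^0\cong F_2\rtimes\mathbb{Z}$, where the $\mathbb{Z}$ acts on $F_2$ by the Artin action of $B_2$ on $\pi_1(D^2\setminus\{p_1,p_2\})$. The main technical point is the identification $B_3^0=\pi^{-1}(\langle(12)\rangle)$, which reduces to the braid-relation calculation for $A_{13}$; once this is in hand, the Birman sequence and the obvious splitting give the result.
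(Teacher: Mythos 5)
Your proof is correct and follows essentially the same route as the paper: both exhibit the split short exact sequence $1\to F_2\to B_3^0\to B_2\cong{\Bbb Z}\to 1$ obtained by forgetting the third strand, with kernel the fundamental group of the twice-punctured disk and section generated by $\sigma_1$. Your preliminary identification $B_3^0=\pi^{-1}(\langle(12)\rangle)$ via the identity $\sigma_2\sigma_1^2\sigma_2^{-1}=\sigma_1^{-1}\sigma_2^2\sigma_1$ is a welcome extra verification that the paper leaves implicit; it is what guarantees that the forgetful map is defined on all of $B_3^0$ and that its kernel is the full point-pushing subgroup.
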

\begin{proof}
We have the following short exact sequence:
$$1\to F_2\overset{f_1}{\to}B_3^0\overset{f_2}{\to} {\Bbb Z}\to 0,$$
where $f_2$ is the number of half-twists between the first string and the second string, namely, it is the map $B_3^0\to B_2\cong {\Bbb Z}$ obtained by forgetting the third string.
The subgroup in $B_3^0$ satisfying $f_2=0$ is considered as the homotopy class of a path on the 2 holed disk with a base point.
Thus we have $\text{Ker}(f_2)\cong F_2$.
This exact sequence is splittable since $B_2\cong\langle\sigma_1\rangle$ is the subgroup in $B_3^0$ as a lift of $f_2$. \qed
\end{proof}
Since $F_2$ and ${\Bbb Z}$ are torsion-free, $F_2\rtimes {\Bbb Z}$ is also torsion-free.
This means that if $\varphi_{p,q}$ is torsion, then $\varphi_{p,q}=\text{id}$ holds.
Since the twist $(P,\varphi_{p,q})$ of $E(1)_{O_2}=3{\Bbb C}P^2\#19\overline{{\Bbb C}P^2}$ is
trivial, namely, $\Delta_{K_{p,q}}(t_1,t_2)=0$.
The 2-bridge knot with Alexander polynomial zero is the 2-component unlink only.
Therefore if $p\neq 0$, then $\varphi_{p,q}$ is not torsion.

We compute the intersection form of $D_{\varphi_{p,q}}(P)$.
The double is described in {\sc Figure}~\ref{tanglae} (the case of $N=3$).
The two ($0$-framed) fine curves are the attaching spheres of the upper manifolds of the double.
The curve is parallel to the thick curve in each box with $\pm b_{2k+1}$-half twist and is twisted in each box with $\pm b_{2k}$-half twist.
The parallel and twisted diagram is described in {\sc Figure}~\ref{insidbox}.
The first deformation (homeomorphism) in {\sc Figure}~\ref{insidbox} is also seen in \cite{Tan1} and the second and fourth deformations (diffeomorphisms)
are also seen in \cite{Tan1}.
The third deformation in {\sc Figure}~\ref{insidbox} is an isotopy of the diagram.
Hence, the intersection form is $\oplus^2\begin{pmatrix}0&1\\1&-\frac{1}{2}(b_1+b_3+\cdots+b_N)\end{pmatrix}$.

We claim the following:
\begin{lem}
Let $[b_1,\cdots,b_N]$ be a continued fraction of $p/q$ with $N$ an odd natural number.
If $b_1,b_3,\cdots b_N$ are all even integers, then $p\equiv (-1)^{\frac{N-1}{2}}(b_1+b_3+\cdots+b_N)\bmod 4$.
\end{lem}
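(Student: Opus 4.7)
The approach is the standard numerator recursion for the continued fraction. Set $p_{N+2}=0$, $p_{N+1}=1$, and define
$$p_i=b_i\, p_{i+1}-p_{i+2}\qquad(i=N,N-1,\ldots,1),$$
so that $p=p_1$ (the determinant of each $2\times2$ step matrix is $1$, so $\gcd(p_1,p_2)=1$ and this is the honest numerator). The first preparatory step is a parity lemma proved by downward induction: $p_i$ is even precisely when $i$ is odd, for $1\le i\le N$. The base case uses $p_N=b_N$, even because $N$ is odd and $b_N$ is one of the even-indexed inputs; the inductive step uses that $b_i$ is even whenever $i$ is odd and that $p_{N+1}=1$ is odd.

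The heart of the argument is a mod-$4$ two-step recursion. For odd $i$ with $i\le N-2$, substituting $p_{i+1}=b_{i+1}p_{i+2}-p_{i+3}$ into the one-step recursion gives
$$p_i=(b_ib_{i+1}-1)\,p_{i+2}-b_i\, p_{i+3}.$$
By the parity lemma, $p_{i+2}$ is even, and $b_i$ is even by hypothesis, so $b_ib_{i+1}p_{i+2}\equiv 0\pmod 4$; meanwhile $p_{i+3}$ is odd (since $i+3$ is even), so $b_i\, p_{i+3}\equiv b_i\pmod 4$. Hence
$$p_i\equiv -p_{i+2}-b_i\pmod 4.$$

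Iterating this relation $K:=(N-1)/2$ times starting from $p_N=b_N$ produces an expression
$$p_1\equiv\sum_{j=0}^{K}\varepsilon_j\, b_{N-2j}\pmod 4,\qquad\varepsilon_j\in\{\pm 1\},$$
with leading coefficient $\varepsilon_0=(-1)^{K}$. The final observation is that every $b_{N-2j}$ carries an odd index and is therefore even by hypothesis, so $-b_{N-2j}\equiv b_{N-2j}\pmod 4$; this lets one replace each $\varepsilon_j$ by $(-1)^{K}$ without changing the residue, giving
$$p\equiv(-1)^{(N-1)/2}(b_1+b_3+\cdots+b_N)\pmod 4,$$
as claimed.

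The main obstacle is the bookkeeping in the mod-$4$ analysis of the two-step recursion: one must correctly separate the product terms containing two even factors (which vanish mod $4$) from those containing only one (which survive as $b_i\bmod 4$). The parity lemma is exactly what pins this down, and once it is in place the induction and the sign-absorption step are mechanical. The only subtlety is to make sure the two-step recursion is invoked only for $i\le N-2$, treating $p_N=b_N$ as the starting value of the iteration.
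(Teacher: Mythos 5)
Your proof is correct and follows essentially the same route as the paper: both analyze the standard continued-fraction convergent recursion modulo $4$ by grouping two steps at a time and using the evenness of the odd-indexed $b_i$ to kill the product terms, the paper phrasing this as an induction on the $2\times 2$ matrix product $\prod\begin{pmatrix}b_i&-1\\1&0\end{pmatrix}$ while you track only the scalar numerator sequence together with a parity lemma. Your sign-absorption step ($-b\equiv b\bmod 4$ for even $b$) is exactly what makes the mixed signs harmless, and the argument is sound.
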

\begin{proof}
The integer $p$ is equal to the (1,1)-component in the following matrix.
$$\begin{pmatrix}b_1&-1\\1&0\end{pmatrix}\begin{pmatrix}b_2&-1\\1&0\end{pmatrix}\cdots \begin{pmatrix}b_N&-1\\1&0\end{pmatrix}.$$
Since we have
$$\begin{pmatrix}b_1&-1\\1&0\end{pmatrix}\begin{pmatrix}b_2&-1\\1&0\end{pmatrix}\begin{pmatrix}b_3&-1\\1&0\end{pmatrix}\equiv \begin{pmatrix}-b_1-b_3&1-b_1b_2\\1-b_2b_3&-b_2\end{pmatrix}\bmod 4.$$
Suppose that 
\begin{equation}\prod_{l=1}^{2k+1}\begin{pmatrix}b_l&-1\\1&0\end{pmatrix}\equiv(-1)^{k}\begin{pmatrix}\sum_{l=0}^kb_{2l+1}&-1+\sum_{s=1}^kc_sb_{2s-1}\\1+\sum_{s=1}^kd_sb_{2s+1}&e\end{pmatrix}\bmod 4,\label{induction}\end{equation}
where $c_i,d_i,e$ are some integers.
Then we have
\begin{eqnarray*}
&&\begin{pmatrix}\sum_{s=0}^kb_{2s+1}&-1+\sum_{s=1}^kc_sb_{2s-1}\\1+\sum_{s=1}^kd_sb_{2s+1}&e\end{pmatrix}\begin{pmatrix}b_{2k+2}&-1\\1&0\end{pmatrix}\begin{pmatrix}b_{2k+3}&-1\\1&0\end{pmatrix}\\
&=&\begin{pmatrix}\sum_{s=0}^kb_{2s+1}&-1+\sum_{s=1}^kc_sb_{2s-1}\\1+\sum_{s=1}^kd_sb_{2s+1}&e\end{pmatrix}\begin{pmatrix}b_{2k+2}b_{2k+3}-1&-b_{2k+2}\\b_{2k+3}&-1\end{pmatrix}\\
&\equiv&\begin{pmatrix}-\sum_{s=0}^{k+1}b_{2s+1}&b_{2k+2}\sum_{s=0}^kb_{2s+1}+1+\sum_{s=1}^kc_sb_{2s-1}\\-1-b_{2k+2}b_{2k+3}-\sum_{s=1}^kd_sb_{2s+1}+eb_{2k+3}&-e'\end{pmatrix}\bmod 4\\
&=&-\begin{pmatrix}\sum_{s=0}^{k+1}b_{2s+1}&-1+\sum_{s=1}^{k+1}c'_sb_{2s-1}\\1+\sum_{s=1}^{k+1}d'_sb_{2s+1}&e'\end{pmatrix},
\end{eqnarray*}
where $c_i',d_i',e'$ are some integers.
Thus (\ref{induction}) holds for $k+1$ instead of $k$.
The induction implies $p\equiv (-1)^{\frac{N-1}{2}}(b_1+b_3+\cdots+b_N)\bmod 4$.
\qed

We go back to the proof of Proposition~\ref{pluggcork}.
The intersection form of $D_{\varphi_{p,q}}(P)$ is 
$$\oplus^2\begin{pmatrix}0&1\\1&(-1)^{\frac{N+1}{2}}\frac{p}{2}\end{pmatrix}\cong \begin{cases}\oplus^2\langle 1\rangle\oplus^2\langle -1\rangle &p\equiv 2\bmod 4\\\oplus^2H&p\equiv 0\bmod 4\end{cases}$$
The Boyer's result means that if $p\equiv 2\bmod 4$, then $(P,\varphi_{p,q})$ is a plug and if $p\equiv 0\bmod 4$, then $(P,\varphi_{p,q})$ is a g-cork.
\qed\end{proof}
\begin{figure}[htpb]
\begin{center}\includegraphics{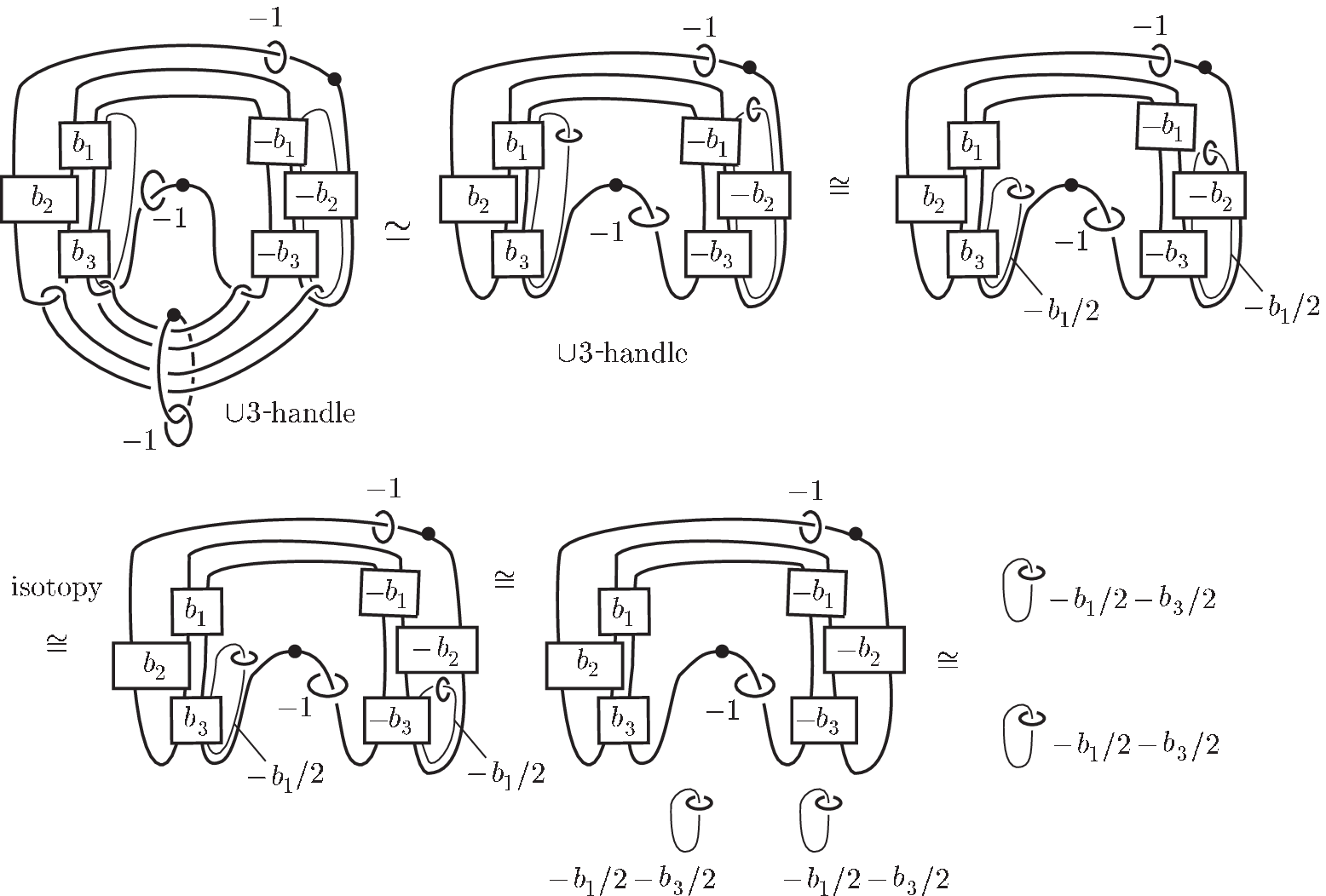}\caption{The homeomorphism type of $D_{\varphi_{p,q}}(P)$.}\label{tanglae}\end{center}
\end{figure}
\begin{figure}[htpb]
\begin{center}\includegraphics{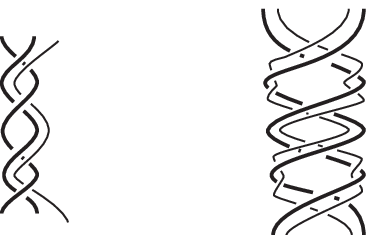}\caption{The local pictures of the fine curves in the box \fbox{$\pm b_{2k-1}$} and \fbox{$\pm b_{2k}$}
in the first picture in {\sc Figure}~\ref{tanglae}.
(the cases of $b_{2k-1}=4$ or $b_{2k}=4$.}\label{insidbox}\end{center}
\end{figure}
\end{proof}
We decompose Theorem~\ref{2bridgetheorem} into two propositions (Proposition~\ref{knot1} and \ref{link1}).
\begin{prop}
\label{knot1}
Let $X$ be a 4-manifold containing $V$ and $K_{p,q}$ be a non-trivial 2-bridge knot (i.e. $p$ is an odd number).
Then there exists an embedding $i:P\hookrightarrow V\subset X$ such that the twist $(P,\varphi_{p-1,q})$ gives the deformation:
$$X_{K_{p,q}}=X(P,\varphi_{p-1,q},i),$$
where the embedding $i$ is defined in {\sc Figure}~\ref{twist3} and independent of $K_{p,q}$.
\end{prop}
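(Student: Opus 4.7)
\textbf{Proof plan for Proposition~\ref{knot1}.}
The strategy is to realize $K_{p,q}$ starting from the unknot $O_1$ by an iterated application of Theorem~\ref{crossingchange}, with the continued fraction of $(p-1)/q$ controlling which powers of $\varphi$ and $\psi$ to use at each level of the 2-bridge presentation.

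First, since the unknot has trivial Alexander polynomial, the knot-surgery $X_{O_1}$ is diffeomorphic to $X$, so we view $X$ as the knot-surgery on the unknot $O_1$ sitting inside the general fiber torus of $V\subset X$. I would next describe explicitly the embedding $i:P\hookrightarrow V$ of Figure~\ref{twist3}: it is chosen so that the two parallel arcs of $\partial P$ on which $\varphi$ acts (as in Theorem~\ref{crossingchange}) lie in a fixed 3-ball meeting the fiber in four points, placing the initial tangle in a standard position inside the cusp. Because $V$ already contains one clasp coming from the Kodaira~III structure, this standard position represents the rational tangle whose closure differs from the tangle closure of $[b_1,\dots,b_N]$ by $+1$ in the numerator; this is the origin of the shift from $(p-1)/q$ to $p/q$.

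Next, for $p$ odd take a continued fraction $(p-1)/q=[b_1,\dots,b_N]$ with $N$ odd and $b_1,b_3,\dots,b_N$ all even, which is possible because $p-1$ is even. By Theorem~\ref{crossingchange}, applying $(P,\varphi^{b_{2k+1}/2})$ adds $b_{2k+1}$ half-twists to the two strands of the tangle, exactly the twist at the $(2k{+}1)$-st level of the braid $B_{p-1,q}$. A parallel analysis of Figure~\ref{twist} shows that $(P,\psi)$ implements a single half-twist on the other pair of strands (the rotated configuration), so $(P,\psi^{b_{2k}})$ contributes the $2k$-th level of twists. Composing these twists in the order dictated by the definition~(\ref{varphipq}) of $\varphi_{p-1,q}$ thus builds up the braid $B_{p-1,q}$ inside the 3-ball and closes it up, together with the intrinsic clasp in $V$, to produce the 2-bridge knot of slope $((p-1)+1)/q=p/q$, namely $K_{p,q}$. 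The twist operation on the 4-manifold side then yields $X(P,\varphi_{p-1,q},i)=X_{K_{p,q}}$.

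The main obstacle is the careful bookkeeping of two items. First, one must verify handle-theoretically that $\psi$ indeed realizes the required perpendicular half-twist — this is a direct but delicate handle-calculus check using Figure~\ref{twist} to track how $\psi$ acts on the two meridional strands. Second, and more subtly, one must confirm that the initial embedding $i$ corresponds to the rational slope $1/0$ (or equivalently adds the ``$+1$'' at the numerator level) so that the total slope produced by $\varphi_{p-1,q}$ is $p/q$ rather than $(p-1)/q$; this can be checked by comparing the picture in Figure~\ref{twist3} against the continued fraction recursion for $B_{p-1,q}$. Once both points are verified, the proposition follows by iteratively invoking Theorem~\ref{crossingchange}.
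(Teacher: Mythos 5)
Your plan is essentially the paper's own argument: the paper fixes the embedding $i$ of Figure~\ref{twist3} and then performs the twists $\varphi^{(b_1-1)/2},\psi^{b_2},\varphi^{b_3/2},\dots$ one continued-fraction level at a time (Figure~\ref{plugb1}), building the braid $B_{p,q}$ with the pre-existing clasp of $V$ supplying exactly the ``$+1$'' shift you identify between $p-1$ and $p$. The two verifications you flag (that $\psi$ realizes the perpendicular half-twist, and the slope bookkeeping of the initial embedding) are precisely what the paper checks pictorially, so the approaches coincide.
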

\begin{proof}
\begin{figure}[htbp]
\begin{center}
\includegraphics{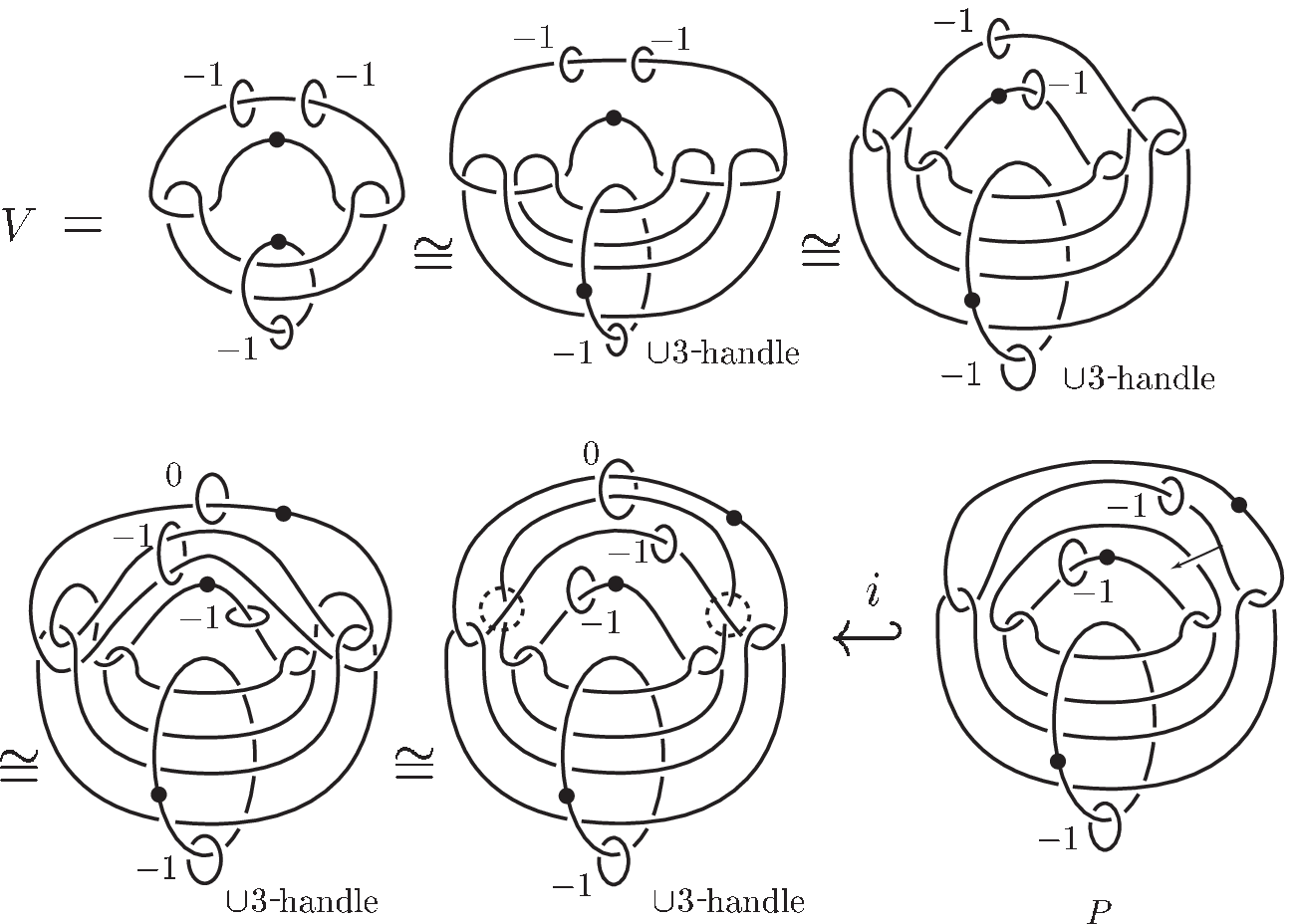}
\caption{The embedding $i:P\hookrightarrow V$.}
\label{twist3}
\end{center}
\end{figure}
The embedding $i:P\hookrightarrow V$ is constructed in {\sc Figure}~\ref{twist3}.
The twist $(P,\varphi^{\frac{b_1-1}{2}})$ is described in the first deformation in {\sc Figure}~\ref{plugb1}.
Consecutively, we do the twist $(P,\psi^{b_2})$ (the second deformation in {\sc Figure}~\ref{plugb1}).
Continuing the twists along (\ref{varphipq}), we totally obtain the twist $(P,\varphi_{p,q})$
in the last picture in {\sc Figure}~\ref{plugb1}.
Here $-B_{p,q}$ is the mirror image of the braid $B_{p,q}$.
\begin{figure}[htbp]
\begin{center}
\includegraphics{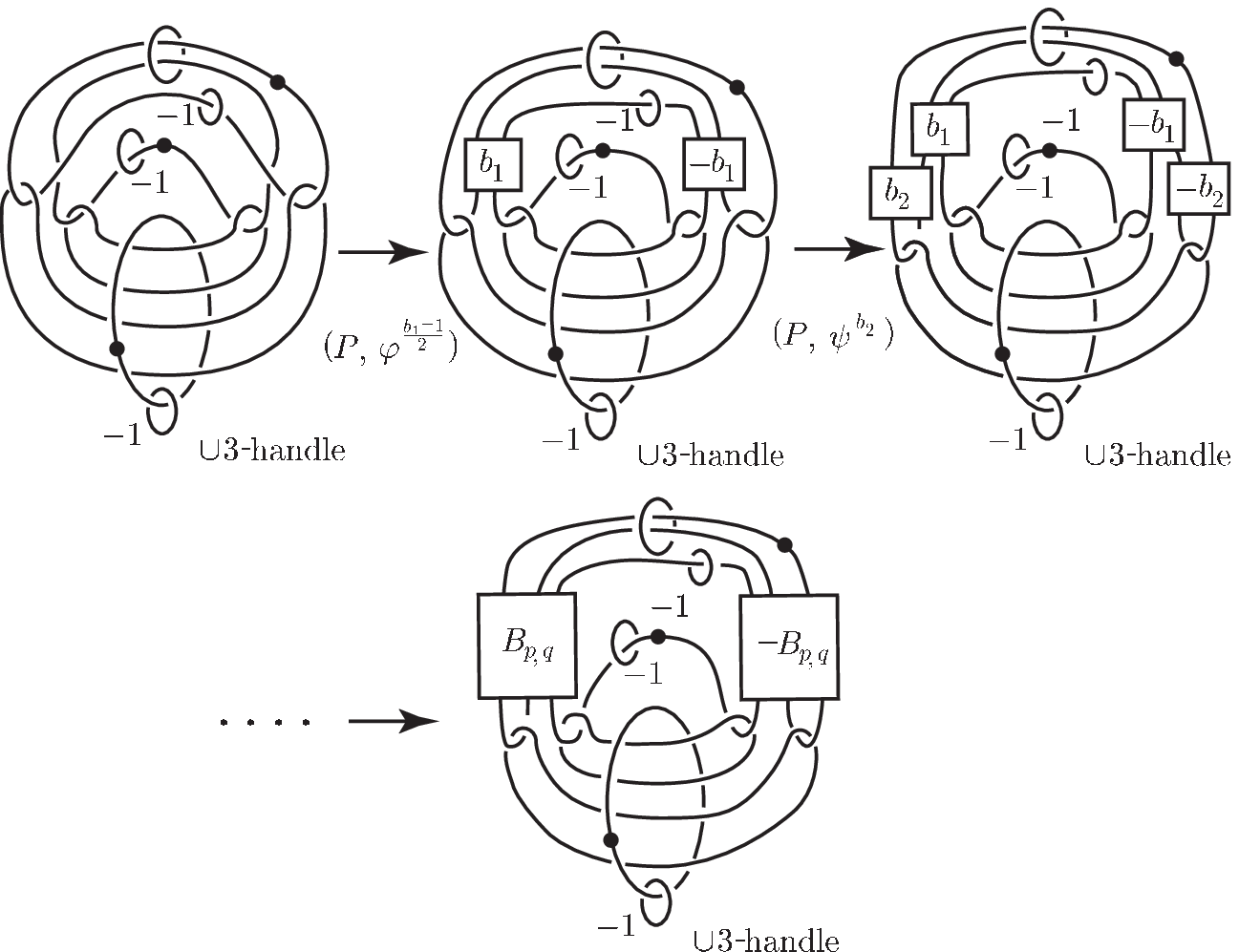}
\caption{The construction of the twist by $(P,\varphi_{p-1,q})$ of $V$. The box \fbox{$n$} stands for the $n$-half twist.}
\label{plugb1}
\end{center}
\end{figure}

We compute the intersection form of the twisted double $D_{\varphi_{p,q}}(P)$.
\hfill\qed
\end{proof}
\begin{rmk}
In the similar way, we can also construct another embedding $i':P\hookrightarrow V$ by changing the crossings in the broken circles in {\sc Figure}~\ref{twist3}.
This embedding is different from $i$, because the twist $(P,\varphi_{p-1,q})$ gives $V_{K_{p-2,q}}$.
In general, the Alexander polynomials of $K_{p-2,q}$ and $K_{p,q}$ are different.
\end{rmk}
\subsection{2-bridge link-surgery}
We consider the case of link-surgery.
Let $C$ be a cusp neighborhood (i.e., Kodaira's singular fibration II).
The handle decomposition of $C$ is described in {\sc Figure}~\ref{KodIII}.
We denote $C\# C\#S^2\times S^2$ by $W$.
\begin{prop}
\label{link1}
Let $X_i$ ($i=1,2$) be two 4-manifolds containing $C$ and 
let $X$ be $X_1\#X_2\#S^2\times S^2$.
If $K_{p,q}$ is a 2-bridge link (i.e. $p$ is an even number), then
there exists an embedding $j:P\hookrightarrow W\subset X$ such that the twist $(P,\varphi_{p,q})$ gets
$$X(P,\varphi_{p,q})=(X_1,X_2)_{K_{p,q}},$$
where the embedding $j:P\hookrightarrow X:=X_1\#X_2\#(S^2\times S^2)$
is the one obtained by the same way as indicated in {\sc Figure}~\ref{plugb1}.
\end{prop}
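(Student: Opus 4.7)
The plan is to mirror the proof of Proposition~\ref{knot1}, now working inside $W=C\# C\# S^2\times S^2$ in place of $V$. First I would exhibit an explicit embedding $j:P\hookrightarrow W$ in the spirit of Figure~\ref{twist3}: each cusp summand provides a neighborhood of one of the two general fibers that serve as the gluing tori in the link-surgery double fiber sum (contributing the meridians $m_1,m_2$), while the $S^2\times S^2$ summand supplies the extra handles needed to host the 3-braid pattern of $P$ connecting the two cusps. Before any twisting, the double fiber sum associated to this configuration realizes $(X_1,X_2)_{O_2}=X_1\#X_2\#S^2\times S^2=X$, exactly the starting manifold.

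Next I would carry out the twist $(P,\varphi_{p,q})$ factor by factor using the decomposition (\ref{varphipq}), reading from right to left. Each factor $\varphi^{b_{2k-1}/2}$ introduces $b_{2k-1}/2$ full twists on the relevant pair of strands by the local interpretation of Theorem~\ref{crossingchange}, while each factor $\psi^{b_{2k}}$ contributes $b_{2k}$ half-twists on the complementary pair of strands by the definition of $\psi$ in Figure~\ref{twist}. Drawing the successive diagrams exactly as in Figure~\ref{plugb1}, these twists assemble into the 3-braid $B_{p,q}$ whose closure is the 2-bridge link $K_{p,q}$ according to the conventions of Figure~\ref{2-bridge}.

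Finally I would read off the resulting manifold. Because the fine curves surviving the twisting now form the $2$-component link $K_{p,q}$ sitting inside the two cusp summands, the double fiber sum with $X_1$ and $X_2$ along the tori $T_{m_1},T_{m_2}$ produces
$$X(P,\varphi_{p,q},j)=X_1\,\#_{T_1=T_{m_1}}(M_{K_{p,q}}\times S^1)\,\#_{T_{m_2}=T_2}X_2=(X_1,X_2)_{K_{p,q}},$$
which is the required identification.

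The main obstacle will be the pictorial bookkeeping: one must check that the superimposed $\varphi^{b_{2k-1}/2}$ and $\psi^{b_{2k}}$ moves on the two families of strands combine to produce precisely the standard 3-braid $B_{p,q}$, with the correct framings and no spurious linking coming from the $S^2\times S^2$ summand or from slides across the two cusp fibers. This is essentially a careful Kirby-calculus verification parallel to the one used in Proposition~\ref{knot1} and to the handle moves in Figure~\ref{insidbox}; I do not expect a conceptual difficulty beyond keeping track of the diagrams.
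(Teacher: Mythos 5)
Your proposal is correct and follows essentially the same route as the paper: the paper's proof simply applies $\varphi_{p,q}$ to the embedding $j:P\hookrightarrow C\#C\#S^2\times S^2$ of Figure~\ref{linksurgery} in exactly the manner of Figure~\ref{plugb1}, which is what you describe factor by factor. Your version just spells out in words the diagrammatic bookkeeping that the paper delegates entirely to the figures.
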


\begin{proof}
The application of $\varphi_{p,q}$ to {\sc Figure}~\ref{linksurgery} 
in the same way as {\sc Figure}~\ref{plugb1}
gives the twist $X(P,\varphi_{p,q})=(X_1,X_2)_{K_{p,q}}$.
\qed
\end{proof}
\begin{figure}[htbp]
\begin{center}
\includegraphics{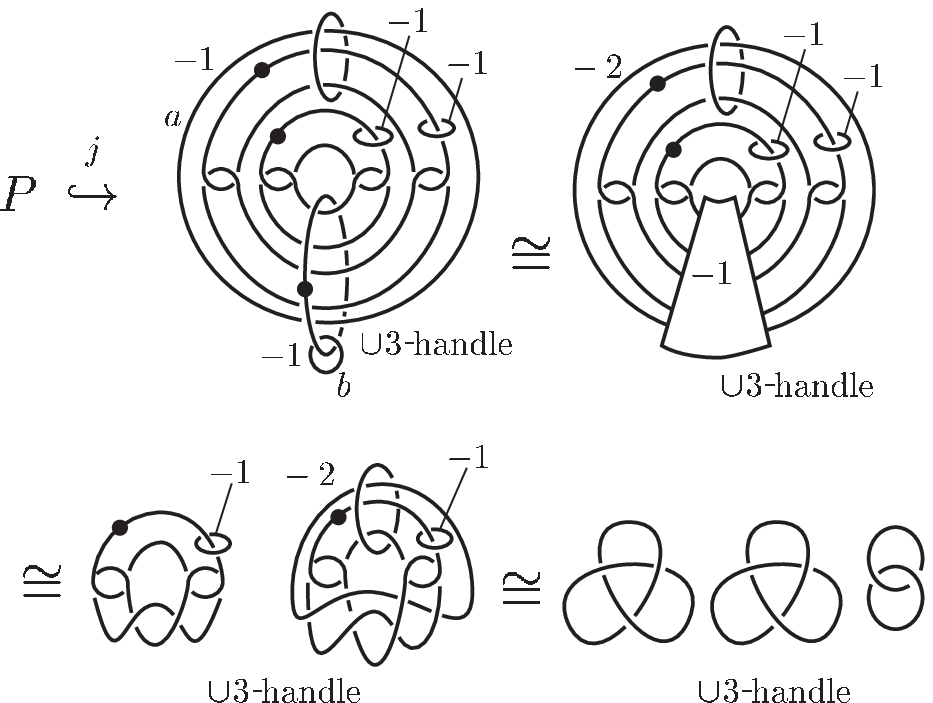}
\caption{The embedding $j:P\hookrightarrow C\#C\#S^2\times S^2=:W$.}
\label{linksurgery}
\end{center}
\end{figure}
{\bf Proof of Theorem~\ref{2bridgetheorem}.}
Let $K$ be a 2-bridge knot or link.
Then Proposition~\ref{knot1} and \ref{link1}, it follows the required assertion.
\qed 
\subsection{A twist $(M,\mu)$.}
\label{OtherM}
Let $M$ be the manifold described in {\sc Figure}~\ref{m}.
We factorize the knot mutation into the three processes as in {\sc Figure}~\ref{mufact}.
\begin{figure}[htbp]
\begin{center}
\includegraphics{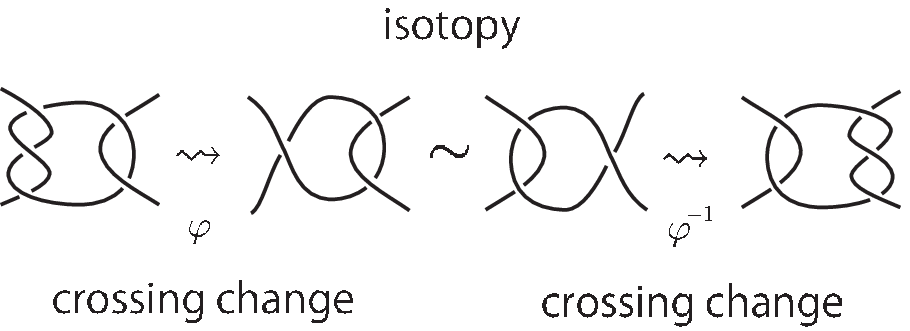}
\caption{The factorization of the knot mutation.}
\label{mufact}
\end{center}
\end{figure}
According to this process, we define $\mu$ to be the map obtained by the process as described in {\sc Figure}~\ref{mutationplug}.
\begin{figure}[htbp]
\begin{center}
\includegraphics{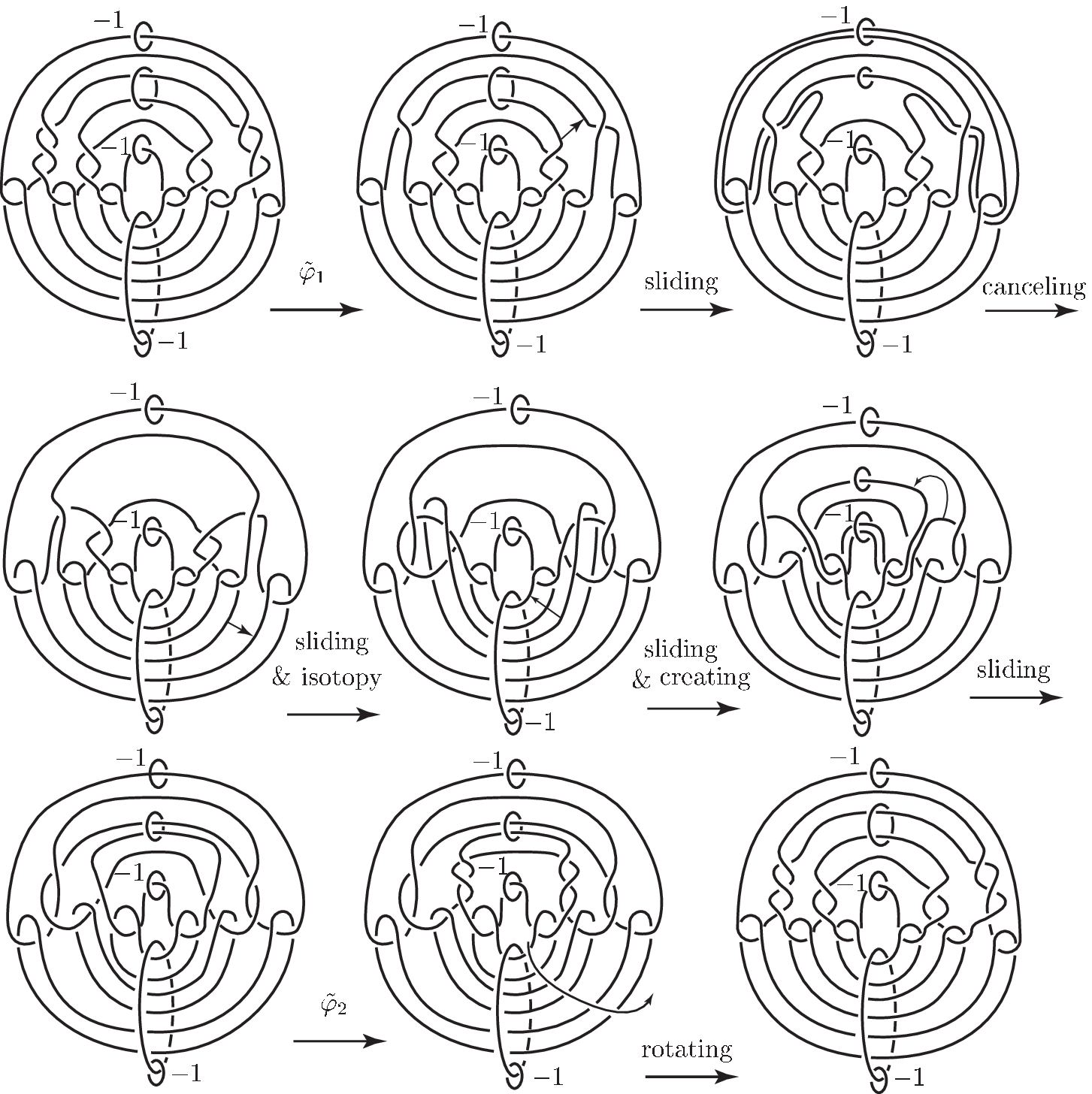}
\caption{The definition of $\mu$.}
\label{mutationplug}
\end{center}
\end{figure}
Here $\tilde{\varphi}_1,\tilde{\varphi}_2:\partial M\to \partial M$ are maps obtained by performing locally $\varphi,\varphi^{-1}$ on $\partial M$.


{\bf Proof of Theorem~\ref{Mmutant}.}
Let $K,K'$ be a mutant pair.
We find an embedding $M\hookrightarrow V_K$.
Let $D$ be a knot diagram of the knot $K$ containing the local tangle of the right in {\sc Figure}~\ref{knotm}.
For example, the first picture in {\sc Figure}~\ref{mdiff} is such a diagram.
We move the local tangle surrounded by the broken line to a bottom position by some isotopy (the second picture).
The resulting diagram gives a plate presentation with keeping the local picture in the bottom (the third picture).

We prove that $(M,\mu)$ is a twist between knot-surgeries for mutant pair $K$ and $K'$
by illustrating the case of $V_{KT}\leadsto V_C$ in {\sc Figure}~\ref{KTC}, where $KT$ is the Kinoshita-Terasaka knot and $C$ is the Conway knot.

By keeping track of the processes in {\sc Figure}~\ref{mutationplug}, the square $\mu^2$ is the two times of the last move in {\sc Figure}~\ref{mutationplug}.
This means a $360^{\circ}$ rotation of $\partial M$ along the torus.
This is homotopic to the identity.
\hfill\qed
\begin{figure}\begin{center}\includegraphics{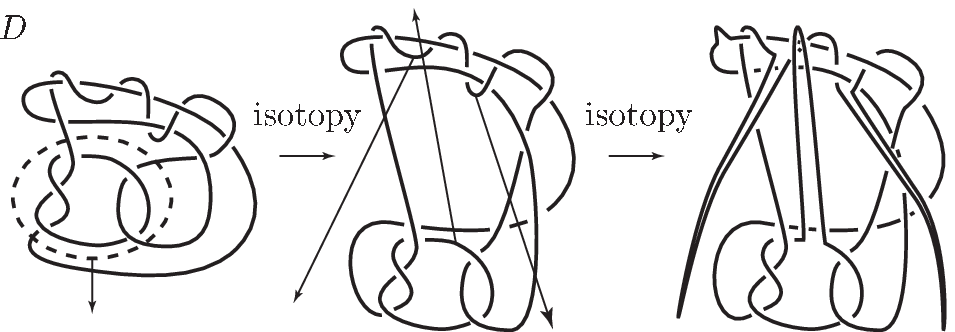}\caption{Moving the local tangle with respect to the mutant move.}\label{mdiff}\end{center}\end{figure}
\begin{figure}[htbp]
\begin{center}
\includegraphics{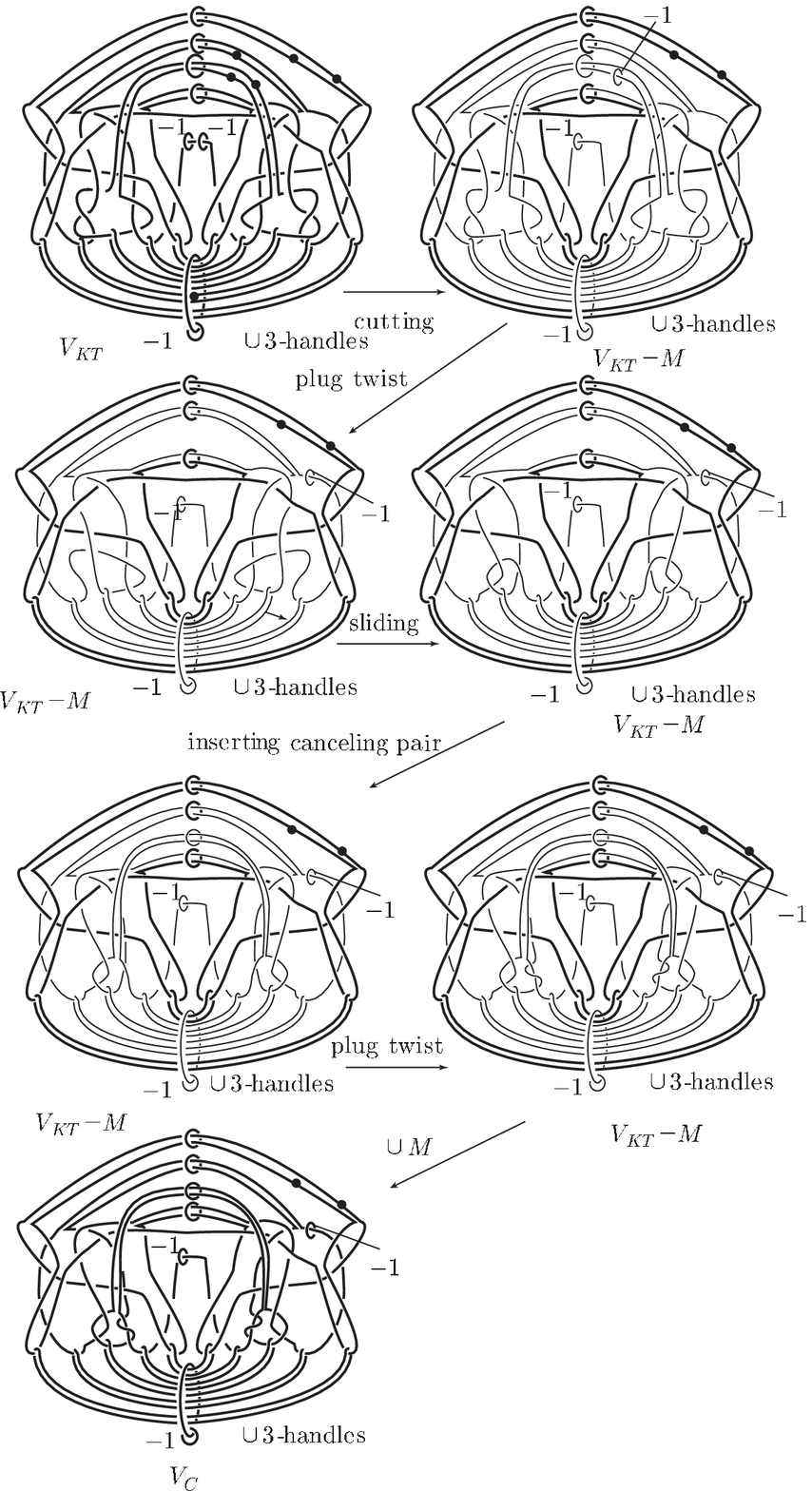}
\caption{The performance $V_{KT}\leadsto V_{KT}-M\leadsto (V_{KT}-M)\cup_{\mu}M=V_C$. The fine curve presents the removed handles for $V_{KT}-M$.}
\label{KTC}
\end{center}
\end{figure}

{\bf Proof of Proposition~\ref{Mmutant2}.}
The first picture in {\sc Figure}~\ref{muinverse} presents the untwisted double $D(M):=M\cup_{\text{id}}(-M)$.
We can easily check the diffeomorphism $D(M)\cong \#^3S^2\times S^2$ by handle calculus.
Removing $M$ in $D(M)$, regluing by $\mu$, we get the next picture in {\sc Figure}~\ref{muinverse}.
The intersection form of the twisted double $D_\mu(M)$ is isomorphic to $\oplus^3H$.
Thus, by using Boyer's result in \cite{[B]}, $\mu$ can extend to a self-homeomorphism $M\to M$.
\begin{figure}[htbp]
\begin{center}
\includegraphics{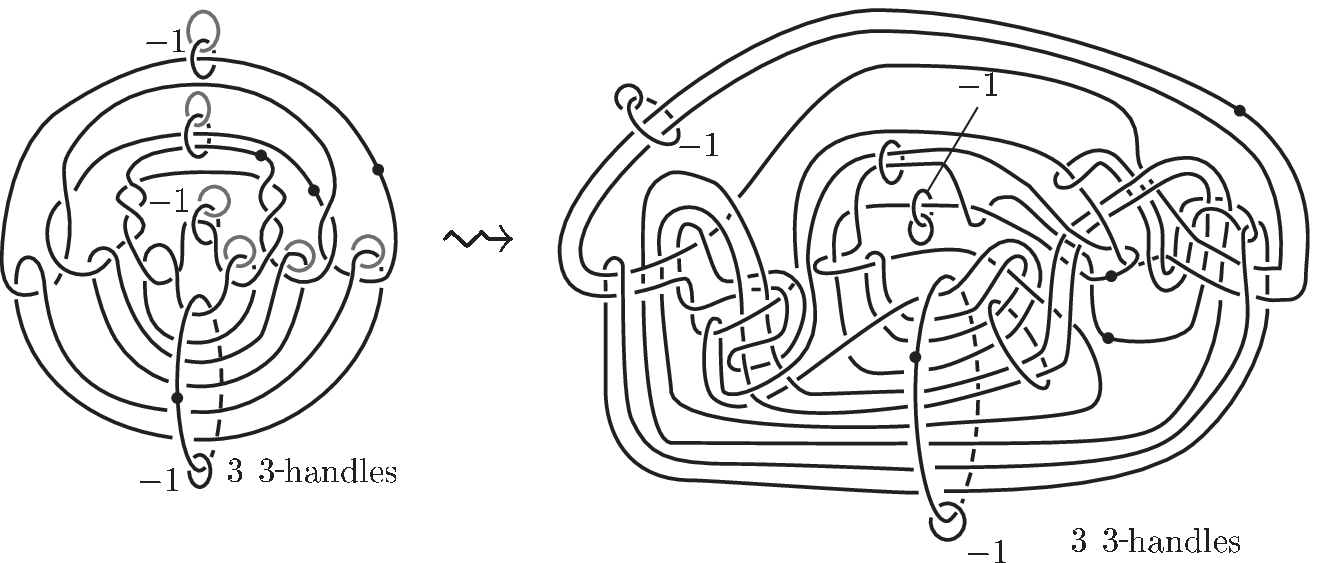}
\caption{$D(M)\leadsto D_{\mu}(M)$ (via the local move $(M,\mu)$).}
\label{muinverse}
\end{center}
\end{figure}
\hfill\qed

Here we define $M_0$ to be $M$ with a $-1$-framed 2-handle deleted (the left of {\sc Figure}~\ref{Mzerofig}).
The boundary map $\mu_0:\partial M_0\to \partial M_0$ is naturally induced from the map $\mu$, because
the $-1$-framed 2-handle in $M$ is fixed via the map $\mu$.
The diffeomorphism $D_{\text{id}}(M_0)\cong \#^2S^2\times S^2\#S^3\times S^1$ and
the homeomorphism $D_{\mu_0}(M_0)\simeq \#^2S^2\times S^2$ hold due to easy calculation.
\begin{figure}[htbp]
\begin{center}\includegraphics{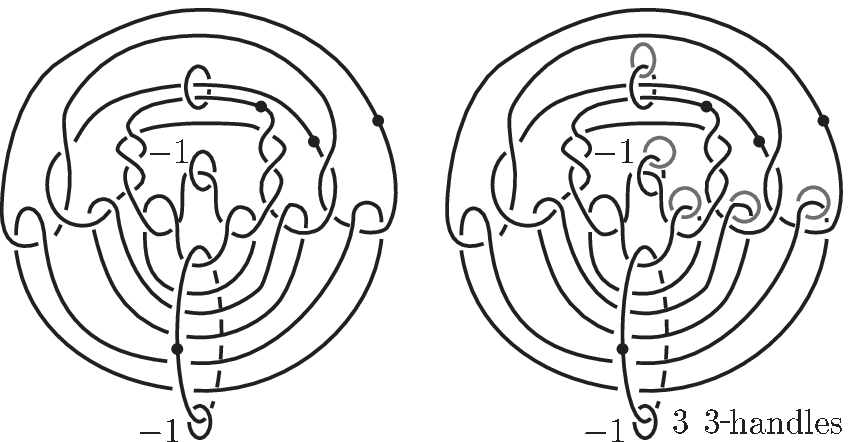}\caption{$M_0$ and $D(M_0)=\#^3S^2\times S^2\#S^3\times S^1$.}\label{Mzerofig}\end{center}
\end{figure}

{\bf Proof of Proposition~\ref{twisteddouble}.}
The outmost (Hopf-linked) pair of $-1$-framed 2-handle and $0$-framed 2-handle in {\sc Figure}~\ref{muinverse}
can be moved to the parallel position of the other Hopf-linked pair by several handle slides.
Such handle slides are indicated in {\sc Figure}~\ref{doublehan}.
Hence, the pair can be removed as one Hopf link component with both framings $0$.
See the bottom row in {\sc Figure}~\ref{doublehan}.
The same deformation is seen in Fig.15 in \cite{[T]}.
The remaining part is $D_{\mu_0}(M_0)$.
\begin{figure}[htbp]
\begin{center}\includegraphics{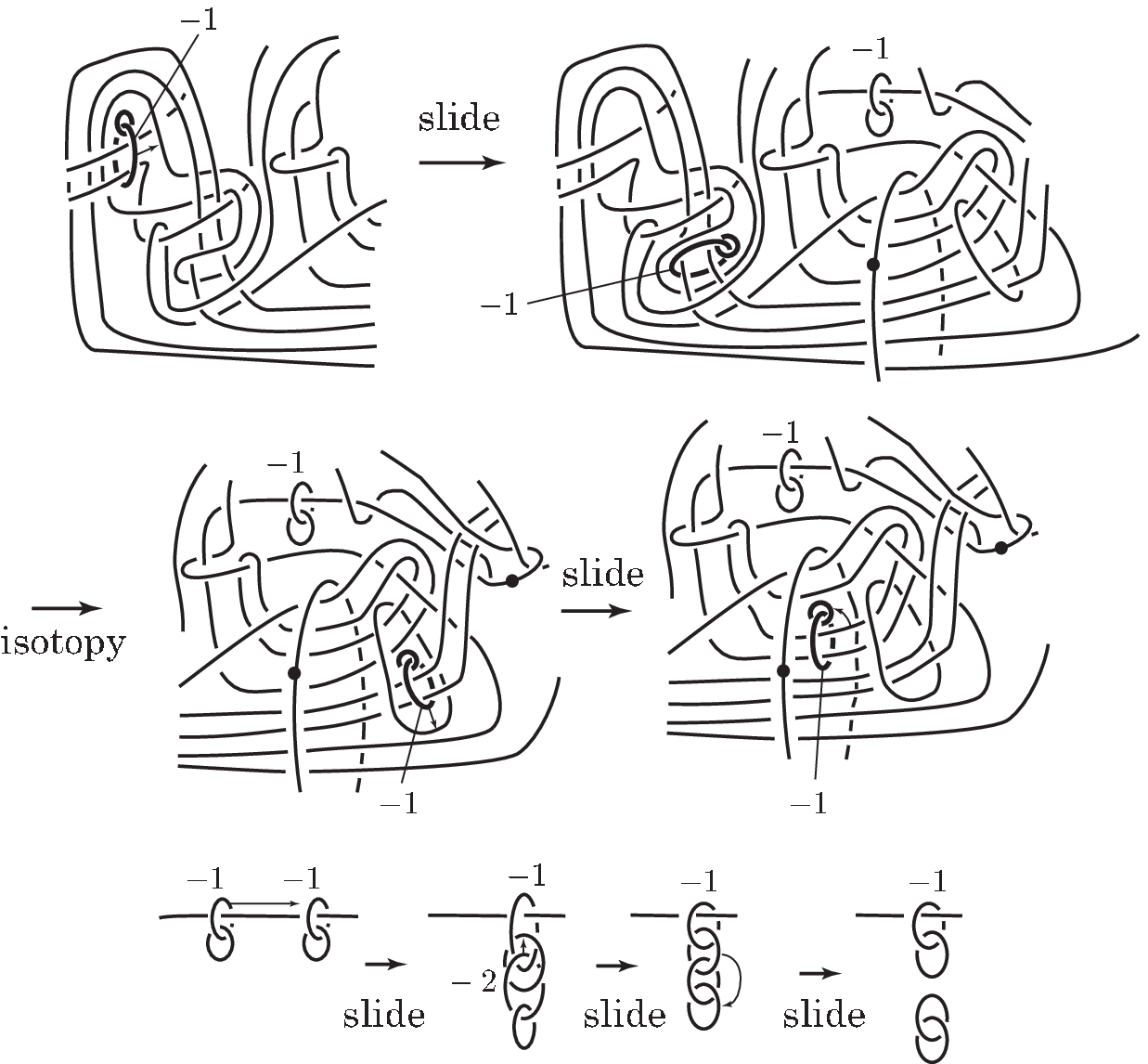}\caption{To move a pair of 2-handles to the position of the other pair.}\label{doublehan}\end{center}
\end{figure}
\hfill\qed

\end{document}